\newcommand{\lat}{\mathfrak{L}}
\newcommand{\comp}{\vartriangleleft}
\newcommand{\compflip}{\vartriangleright}
\begin{document}

\begin{frontmatter}
  \title{Modal logic, fundamentally}
  \author{Wesley H. Holliday}\vspace{-.05in}
  \address{University of California, Berkeley}\vspace{-.05in}

      \subtitle{{\footnotesize Forthcoming in \textit{Advances in Modal Logic}, Vol.~15, 2024.}}

  \begin{abstract}
  
  Non-classical generalizations of classical modal logic have been developed in the contexts of constructive mathematics and natural language semantics. In this paper, we discuss a general approach to the semantics of non-classical modal logics via algebraic representation theorems. We begin with complete lattices $L$ equipped with an antitone operation $\neg$ sending $1$ to $0$, a completely multiplicative operation $\Box$, and a completely additive operation $\Diamond$. Such lattice expansions can be represented by means of a set  $X$ together with binary relations $\vartriangleleft$, $R$, and $Q$, satisfying some first-order conditions, used to represent $(L,\neg)$, $\Box$, and $\Diamond$, respectively. Indeed, any lattice $L$ equipped with such a $\neg$, a multiplicative $\Box$, and an additive $\Diamond$ embeds into the lattice of propositions of a frame $(X,\vartriangleleft,R,Q)$. Building on our recent study of \textit{fundamental logic}, we focus on the case where $\neg$ is dually self-adjoint ($a\leq \neg b$ implies $b\leq\neg a$) and $\Diamond \neg a\leq\neg\Box a$. In this case, the representations can be constrained so that $R=Q$, i.e., we need only add a single relation to $(X,\vartriangleleft)$ to represent both $\Box$ and $\Diamond$. Using these results, we prove that a system of fundamental modal logic is sound and complete with respect to an elementary class of bi-relational structures $(X,\vartriangleleft, R)$.\end{abstract}

  \begin{keyword}
  non-classical modal logic, orthologic, intuitionistic logic, fundamental logic, lattices, weak pseudocomplementation, necessity, possibility, representation
  \end{keyword}
 \end{frontmatter}

\section{Introduction}\label{Intro}
In classical modal logic, necessity and possibility are duals in the sense that $\Box a=\neg\Diamond \neg a$ and $\Diamond a =\neg\Box\neg a$, putting the point algebraically, so typically just one is taken as primitive and the other is treated as defined. The same is true in certain non-classical modal logics, such as the \textit{epistemic orthologic} of \cite{Holliday-Mandelkern2022}. However, in standard treatments of \textit{intuitionistic} modal logic \cite{FischerServi1977,BozicDosen1984,Wijesekera1990}, $\neg\Box a$ does not entail $\Diamond \neg a$, just as in intuitionistic predicate logic, $\neg\forall xP(x)$ does not entail $\exists x\neg P(x)$. In this setting, both $\Box$ and $\Diamond$ must be taken as primitive. Thus, a general approach to non-classical modal logic should do the same. In this paper, building on \cite{Holliday2022,Holliday2023}, we study an approach to the semantics of non-classical modal logics incorporating Plo\v{s}\v{c}ica's \cite{Ploscica1995} approach to the representation of lattices, Birkhoff's \cite{Birkhoff1940} approach to the representation of negation, and the J\'{o}nsson-Tarski \cite{Jonsson1952a} approach to the representation of modal operations; a similar approach without negation was earlier investigated in \cite{Conradie2019}.  Here we add to our treatment of $\neg$ and $\Box$ in \cite{Holliday2022,Holliday2023} a new representation of $\Diamond$.

Our motivation for doing so comes from our recent study of \textit{fundamental logic}, a sublogic of both intuitionistic logic \cite{Heyting1930} and orthologic \cite{Goldblatt1974}. Fundamental propositional logic is defined in \cite{Holliday2023} in terms of a Fitch-style natural deduction system containing only introduction and elimination rules for the logical connectives  $\wedge$, $\vee$, and $\neg$. Thus, unlike Fitch's \cite{Fitch1952,Fitch1966} proof system for classical logic, the Fitch-style proof system for fundamental logic does not contain the rule of Reductio Ad Absurdum (if assuming $\neg\varphi$ leads to a contradiction, conclude $\varphi$) or the rule of Reiteration (which allows pulling previously derived formulas into a subproof). Motivations for dropping Reductio Ad Absurdum include the usual constructive ones, while motivations for dropping Reiteration come from applications to natural language \cite{Holliday-Mandelkern2022}, as well as quantum logic \cite{Chiara2002}. Adding Reductio Ad Absurdum to fundamental logic yields orthologic, while adding Reiteration yields intuitionistic logic in the $\{\wedge,\vee,\neg\}$-fragment. Adding both Reductio and Reiteration gives us back classical logic.

In light of arguments that reasoning with epistemic modals motivates moving from classical logic to orthologic \cite{Holliday-Mandelkern2022} and arguments that reasoning with vague predicates motivates moving from classical to intuitionistic modal logic \cite{Bobzien2020}, it is natural to inquire into extending fundamental logic with modalities. Doing so calls for taking both $\Box$ and $\Diamond$ as primitive, as in intuitionistic modal logic. To accomplish this, we can use \textit{two} accessibility relations, say $R$ for $\Box$ and $Q$ for $\Diamond$. However, we shall see that in the setting of fundamental logic (in which $\neg$ is dually self-adjoint, i.e., $a\leq\neg b$ implies $b\leq\neg a$), just one natural assumption about the interaction of possibility, necessity, and negation, namely that $\Diamond\neg a\leq\neg\Box a$, enables us to use a \textit{single} accessibility relation for both $\Box$ and $\Diamond$. Thus, we will give a simple semantics for fundamental modal logic using bi-relational structures $(X,\comp, R)$ in which $(X,\comp)$ determines a lattice of propositions with negation, and $R$ determines both $\Box$ and $\Diamond$ on the lattice.

In \S~\ref{Background}, we review the background of this project: the system of fundamental logic (\S~\ref{FL}), its algebraic semantics (\S~\ref{AlgSem}), and its relational semantics (\S~\ref{RelSem}). In \S~\ref{Modalities}, we add modalities to the picture and present two representation theorems for lattices with weak negations and independent $\Box$ and $\Diamond$ operations. At this stage, no interaction axioms between $\neg$, $\Box$, and $\Diamond$ are assumed. We study such interactions in \S~\ref{Interactions}, which leads in \S~\ref{UnificationSection} to the appealing simplification mentioned above: in the setting of fundamental logic, assuming $\Diamond\neg a\leq\neg\Box a$ allows us to unify the two accessibility relations for $\Box$ and $\Diamond$. Then from a representation theorem in \S~\ref{UnificationSection}, we obtain the completeness of fundamental modal logic with respect to our bi-relational semantics in \S~\ref{FML}. We conclude in~\S~\ref{Conclusion}.

\section{Background}\label{Background}

\subsection{Fundamental logic}\label{FL}
As noted in \S~\ref{Intro}, the primary definition of fundamental logic in \cite{Holliday2023} is in terms of a Fitch-style proof system with  introduction and elimination rules for $\wedge,\vee,\neg$. For the sake of space, here we will use a secondary but equivalent definition of fundamental logic from \cite{Holliday2023} as a certain \textit{binary logic} in the sense of \cite{Goldblatt1974}. 

Let $\mathcal{L}$ be the language of propositional logic generated from a countably infinite set $\mathsf{Prop}$ of propositional variables by $\wedge$, $\vee$,~and~$\neg$.
\begin{definition}\label{LogicDef} An \textit{intro-elim logic} is a binary relation $\vdash\,\subseteq\mathcal{L}\times\mathcal{L}$ such that for all $\varphi,\psi,\chi\in\mathcal{L}$:
\begin{center}
\begin{tabular}{ll}
1. $\varphi\vdash\varphi$ & 8. if $\varphi\vdash\psi$ and $\psi\vdash\chi$, then $\varphi\vdash\chi$  \\
2. $\varphi\wedge\psi\vdash\varphi$ & \\
3.  $\varphi\wedge\psi\vdash\psi$ & 9. if $\varphi\vdash \psi$ and $\varphi\vdash\chi$, then $\varphi\vdash \psi\wedge\chi$\\
4. $\varphi\vdash \varphi\vee\psi$ & \\
5. $\varphi\vdash \psi\vee\varphi$ & 10. if $\varphi\vdash\chi$ and $\psi\vdash\chi$, then $\varphi\vee\psi \vdash \chi$ \\
6. $\varphi\vdash \neg\neg\varphi$ &  \\
7. $\varphi\wedge\neg\varphi\vdash\psi$ \qquad\qquad\qquad& 11. if  $\varphi\vdash\psi$, then $\neg\psi\vdash\neg\varphi$. 
\end{tabular}
\end{center}
We call the smallest intro-elim logic \textit{fundamental logic}, denoted $\vdash_\mathsf{F}$.
\end{definition}

\textit{Orthologic} \cite{Goldblatt1974}, denoted $\vdash_\mathsf{O}$, is obtained from fundamental logic by adding \textit{double negation elimination}: $\neg\neg\varphi\vdash\varphi$.  \textit{Intuitionistic logic} in the $\{\wedge,\vee,\neg\}$-fragment \cite{Rebagliato1993} is obtained from fundamental logic by strengthening Definition \ref{LogicDef}.6/11 to the \textit{psuedocomplementation} rule that if $\varphi\wedge\psi\vdash\varphi\wedge\neg\varphi$, then $\varphi\vdash\neg\psi$,
 and strengthening proof-by-cases in Definition \ref{LogicDef}.10 to proof-by-cases \textit{with side assumptions}: if $\alpha\wedge\varphi\vdash\chi$ and $\alpha\wedge\psi\vdash \chi$, then $\alpha\wedge (\varphi\vee\psi)\vdash\psi$. \textit{Classical logic}, denoted $\vdash_\mathsf{C}$, is obtained by strengthening fundamental logic with double negation elimination and either of the intuitionistic rules just mentioned \cite[Prop.~3.7]{Holliday-Mandelkern2022}. Of course, there are also weaker logics (in their common signature) than fundamental logic (see \cite{Battilotti1999} and Remark 1.2 of \cite{Holliday2023}).

Aguilera and Byd\u{z}ovsk\'y \cite{Aguilera2022} show that fundamental logic can also be presented in terms of a Gentzen-style  sequent calculus where sequents can have at most one formula on the right, as for intuitionistic logic \cite{Gentzen1935}, and at most two formulas altogether, as for orthologic \cite{Monting1981}. By analyzing this sequent calculus, they show that unlike classical and intuitionistic logic, but like orthologic, fundamental logic is decidable in polynomial time.

\begin{theorem}[\cite{Aguilera2022}] It is decidable in polynomial time whether $\varphi\vdash_\mathsf{F}\psi$.
\end{theorem}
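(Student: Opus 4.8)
The plan is to follow the standard route from a bounded cut-free sequent calculus to a polynomial-time decision procedure, exactly as is known for orthologic \cite{Monting1981}. First I would fix a Gentzen-style calculus $\mathsf{GF}$ whose sequents $\Gamma\Rightarrow\Delta$ carry at most one formula on the right and at most two formulas in total (so $\Gamma$ has at most two formulas, and the relevant sequent shapes are $\Rightarrow$, $\varphi\Rightarrow$, $\Rightarrow\varphi$, $\varphi,\psi\Rightarrow$, and $\varphi\Rightarrow\psi$), with identity axioms $\varphi\Rightarrow\varphi$, an absurdity axiom $\varphi,\neg\varphi\Rightarrow$, two-sided introduction rules for $\wedge$ and $\vee$, rules for $\neg$ mirroring Definition~\ref{LogicDef}.6 and~\ref{LogicDef}.11 (from $\varphi\Rightarrow\psi$ infer $\neg\psi\Rightarrow\neg\varphi$; from $\,\Rightarrow\varphi$ infer $\,\Rightarrow\neg\neg\varphi$), plus a right-weakening restricted so as never to exceed two formulas. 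The crucial design constraint is that no rule ever produces an oversized sequent; this is precisely what rules out unrestricted weakening and Reiteration and thereby captures fundamental logic rather than orthologic or intuitionistic logic.

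The next step is the equivalence $\varphi\vdash_\mathsf{F}\psi$ iff $\,\vdash_{\mathsf{GF}}(\varphi\Rightarrow\psi)$, under the standard reading of $\varphi\Rightarrow\psi$ as ``$\varphi\vdash\psi$'' and of the other sequent shapes as the corresponding consequences. Soundness over $\vdash_\mathsf{F}$ is a routine induction on $\mathsf{GF}$-derivations using the closure conditions 1--11 of Definition~\ref{LogicDef}. For the converse, since $\vdash_\mathsf{F}$ is the \emph{smallest} intro-elim logic, it suffices to check that $\{(\varphi,\psi):\ \vdash_{\mathsf{GF}}(\varphi\Rightarrow\psi)\}$ satisfies conditions 1--11; all are easy except transitivity (condition~8), which amounts to the admissibility of Cut. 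Establishing Cut-admissibility while respecting the two-formula bound is the technical heart of the argument and the step I expect to be the main obstacle: the usual induction on cut-rank and cut-degree must be arranged so that cutting two sequents of size $\le 2$ never yields a sequent of size $3$, which is exactly the delicate interaction that separates fundamental logic from orthologic and intuitionistic logic. This is the analysis carried out in \cite{Aguilera2022}.

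Granting Cut-admissibility, the Cut-free calculus $\mathsf{GF}$ has the subformula property, so every sequent in a derivation of $\varphi\Rightarrow\psi$ is built from subformulas of $\varphi$ and $\psi$. If $n$ bounds the combined length of $\varphi$ and $\psi$, there are at most $n$ such subformulas, hence only $O(n^2)$ sequents of the permitted shapes over them. Finally I would decide $\,\vdash_{\mathsf{GF}}(\varphi\Rightarrow\psi)$ by bottom-up saturation: start from all applicable axioms, then repeatedly add any sequent (over those subformulas) obtainable by one rule application from sequents already derived, until nothing new appears, and check whether $\varphi\Rightarrow\psi$ was produced. Each round adds at least one of the $O(n^2)$ sequents, and testing applicability of a rule to given premises is polynomial, so the procedure halts in polynomial time; restricting search to sequents over the subformulas of the goal loses nothing, again by the subformula property. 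Since $\varphi\vdash_\mathsf{F}\psi$ holds iff $\varphi\Rightarrow\psi$ is produced, this decides the relation in polynomial time.
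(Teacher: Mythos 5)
Your proposal follows the same route the paper attributes to Aguilera and Byd\v{z}ovsk\'y: a Gentzen-style calculus whose sequents have at most one formula on the right and at most two formulas altogether, cut admissibility, and then polynomial-time decision by saturating the $O(n^2)$ candidate sequents over the subformulas of the goal. Note that the paper itself offers no proof of this theorem---it is imported by citation---so the only honest comparison is with the cited source, whose architecture you have reconstructed accurately. The one substantive caveat is the one you flag yourself: cut admissibility for the size-bounded calculus is the entire technical content of the result, since a naive cut-elimination induction can produce intermediate sequents that violate the two-formula bound, and your sketch asserts rather than establishes this lemma. As a self-contained proof the argument is therefore incomplete at exactly that point; as a reconstruction of the cited argument it is faithful, and the remaining steps (soundness by induction on derivations, completeness by checking that cut-free derivability satisfies closure conditions 1--11 of Definition~\ref{LogicDef} and invoking minimality of $\vdash_\mathsf{F}$, and the counting argument for the saturation procedure) are correct and standard.
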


Recall the negative translation of classical into intuitionistic logic \cite{Godel1933,Gentzen1936}: 
\begin{center}
\begin{tabular}{ll}
$g(p)=\neg\neg p$ & $g(\varphi\wedge\psi)=(g(\varphi)\wedge g(\psi))$\\
 $g(\neg\varphi)=\neg g(\varphi)$ & $g(\varphi\vee\psi)=g(\neg(\neg\varphi\wedge\neg\psi))$.
\end{tabular}
\end{center}
As shown in \cite{Holliday2023}, this translation is also a full and faithful embedding of orthologic into fundamental logic.

\begin{proposition}[\cite{Holliday2023}] For all $\varphi,\psi\in\mathcal{L}$, we have $\varphi\vdash_\mathsf{O}\psi$ iff $g(\varphi)\vdash_\mathsf{F}g(\psi)$.
\end{proposition}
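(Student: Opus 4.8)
The plan is to prove the two directions separately, the soundness direction ($\varphi\vdash_\mathsf{O}\psi$ implies $g(\varphi)\vdash_\mathsf{F}g(\psi)$) resting on the fact that $g$ produces $\neg\neg$-stable formulas, and the completeness direction ($g(\varphi)\vdash_\mathsf{F}g(\psi)$ implies $\varphi\vdash_\mathsf{O}\psi$) resting on the fact that $g$ is invertible up to provable equivalence in $\vdash_\mathsf{O}$. First I would prove the \emph{stability lemma}: $\neg\neg g(\varphi)\vdash_\mathsf{F}g(\varphi)$ for every $\varphi\in\mathcal{L}$. This goes by induction on $\varphi$ from two observations: (i) any negated formula is stable, since $\chi\vdash_\mathsf{F}\neg\neg\chi$ (Definition~\ref{LogicDef}.6) gives $\neg\neg\neg\chi\vdash_\mathsf{F}\neg\chi$ by Definition~\ref{LogicDef}.11; and (ii) a conjunction of stable formulas is stable, by Definition~\ref{LogicDef}.2, \ref{LogicDef}.3, \ref{LogicDef}.9, \ref{LogicDef}.11. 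Since $g(p)=\neg\neg p$, $g(\neg\varphi)=\neg g(\varphi)$, and $g(\varphi\vee\psi)=\neg(\neg g(\varphi)\wedge\neg g(\psi))$ are negations, they are stable by (i), while $g(\varphi\wedge\psi)=g(\varphi)\wedge g(\psi)$ is stable by the induction hypothesis and (ii).

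For the soundness direction I would argue that the binary relation $R=\{(\varphi,\psi)\in\mathcal{L}\times\mathcal{L}:g(\varphi)\vdash_\mathsf{F}g(\psi)\}$ is an intro-elim logic satisfying double negation elimination; since $\vdash_\mathsf{O}$ is the least intro-elim logic containing every instance of $\neg\neg\varphi\vdash\varphi$, it follows that $\vdash_\mathsf{O}$ is contained in $R$, which is exactly the claim. Conditions 1, 2, 3, 8, 9, 11 of Definition~\ref{LogicDef} transfer to $R$ directly from $\vdash_\mathsf{F}$ because $g$ commutes with $\wedge$ and $\neg$; conditions 6 and 7 and double negation elimination become, under the translation, the $\vdash_\mathsf{F}$-instances $g(\varphi)\vdash_\mathsf{F}\neg\neg g(\varphi)$, $g(\varphi)\wedge\neg g(\varphi)\vdash_\mathsf{F}g(\psi)$, and $\neg\neg g(\varphi)\vdash_\mathsf{F}g(\varphi)$, the last being the stability lemma. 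Only the disjunction clauses need a short argument: for conditions 4 and 5, contrapose a conjunction-elimination axiom and compose the result with $g(\varphi)\vdash_\mathsf{F}\neg\neg g(\varphi)$; for proof-by-cases (condition 10), from $g(\varphi)\vdash_\mathsf{F}g(\chi)$ and $g(\psi)\vdash_\mathsf{F}g(\chi)$ contrapose to obtain $\neg g(\chi)\vdash_\mathsf{F}\neg g(\varphi)$ and $\neg g(\chi)\vdash_\mathsf{F}\neg g(\psi)$, conjoin them, contrapose again to reach $\neg(\neg g(\varphi)\wedge\neg g(\psi))\vdash_\mathsf{F}\neg\neg g(\chi)$, and finish with stability of $g(\chi)$.

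For the completeness direction I would first note that every instance of $\vdash_\mathsf{F}$ is an instance of $\vdash_\mathsf{O}$, since $\vdash_\mathsf{O}$ is an intro-elim logic and $\vdash_\mathsf{F}$ is the smallest one; hence $g(\varphi)\vdash_\mathsf{F}g(\psi)$ yields $g(\varphi)\vdash_\mathsf{O}g(\psi)$. Then I would prove by induction on $\varphi$ that $\varphi\dashv\vdash_\mathsf{O}g(\varphi)$: the atomic case is double negation introduction and elimination; the $\wedge$- and $\neg$-cases are immediate since $g$ commutes with these connectives; and the $\vee$-case reduces, using the induction hypothesis, to the De Morgan equivalence $\varphi\vee\psi\dashv\vdash_\mathsf{O}\neg(\neg\varphi\wedge\neg\psi)$, whose left-to-right half is already derivable in $\vdash_\mathsf{F}$ (push each disjunct into $\neg(\neg\varphi\wedge\neg\psi)$ via double negation introduction, contraposition, and proof-by-cases) and whose right-to-left half uses double negation elimination (contrapose the conjunction-eliminations of $\neg(\varphi\vee\psi)$, then cancel the resulting double negation). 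Chaining $\varphi\vdash_\mathsf{O}g(\varphi)\vdash_\mathsf{O}g(\psi)\vdash_\mathsf{O}\psi$ via Definition~\ref{LogicDef}.8 then gives $\varphi\vdash_\mathsf{O}\psi$.

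I do not expect a real obstacle; the argument is a careful bookkeeping induction. The only two places where the strength of the setting is genuinely used are (a) that fundamental logic proves enough contraposition for $g$-images to be $\neg\neg$-stable, which is what makes the $\vee$-rules and double negation elimination survive the translation in the soundness direction, and (b) that orthologic, unlike intuitionistic logic, proves the strong De Morgan law $\neg(\neg\varphi\wedge\neg\psi)\vdash\varphi\vee\psi$, which is what permits inverting $g$ up to $\vdash_\mathsf{O}$-equivalence in the completeness direction.
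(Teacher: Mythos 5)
Your proof is correct. The paper itself gives no proof of this proposition (it is cited from \cite{Holliday2023}), but your argument---the $\neg\neg$-stability lemma for $g$-images, showing that $\{(\varphi,\psi): g(\varphi)\vdash_\mathsf{F}g(\psi)\}$ is an intro-elim logic with double negation elimination, and inverting $g$ up to $\dashv\vdash_\mathsf{O}$ via the orthological De Morgan law---is the standard Gödel--Gentzen-style argument and matches how such full-and-faithful negative-translation results are established.
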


We can also carry out classical reasoning inside fundamental logic, but given that the problem of checking $\varphi\vdash_\mathsf{C}\psi$ is co-\textsf{NP}-complete and that of checking $\varphi\vdash_\mathsf{F}\psi$ is in \textsf{P}, we cannot hope for a polynomial-time reduction. Yet we can carry out a reduction at the expense of an exponential blowup in formula length. Given a propositional formula $\varphi$, let $\mathsf{Prop}(\varphi)$ be the set of variables occurring in $\varphi$. Given a set $P=\{p_1,\dots,p_n\}$ of propositional variables, we define the set of \textit{state descriptions over $P$}, $sd(P)$, as the set of all conjunctions of the form $\pm_1 p_1\wedge\dots\wedge \pm_n p_n$ where $\pm_i$ is $\neg$ or empty. The following result shows that $\psi$ is classically derivable from $\varphi$ iff $\psi$ is fundamentally derivable from the assumption that ``there is some determinate way that reality is (in the relevant respects) together with $\varphi$.'' Hence classical propositional logic can be seen as obtained from a logical core of fundamental propositional logic by strengthening the premises of arguments with certain metaphysical assumptions. 

\begin{proposition}\label{ClassicalToFundamental} For any  $\varphi,\psi\in\mathcal{L}$, the following are equivalent:
\begin{enumerate}
\item $\varphi\vdash_{\mathsf{C}}\psi$;
\item $\underset{\delta\in sd(\mathsf{Prop}(\varphi)\cup \mathsf{Prop}(\psi))}{\bigvee}{(\delta \wedge \varphi)}\vdash_{\mathsf{F}} \psi$.
\end{enumerate}
\end{proposition}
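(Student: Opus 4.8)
The plan is to abbreviate $P=\mathsf{Prop}(\varphi)\cup\mathsf{Prop}(\psi)$ and, for each $\delta\in sd(P)$, write $v_\delta$ for the classical valuation on $P$ that makes $p_i$ true iff $p_i$ (rather than $\neg p_i$) is the $i$-th conjunct of $\delta$. Note that since $\mathsf{Prop}(\varphi),\mathsf{Prop}(\psi)\subseteq P$, the classical truth values of $\varphi$ and $\psi$ depend only on a valuation's restriction to $P$, so it is harmless to evaluate them at $v_\delta$.

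For the direction from (2) to (1), I would use that $\vdash_\mathsf{F}\,\subseteq\,\vdash_\mathsf{C}$ (classical logic extends fundamental logic), so (2) yields $\bigvee_{\delta\in sd(P)}(\delta\wedge\varphi)\vdash_\mathsf{C}\psi$; and since every classical valuation satisfying $\varphi$ satisfies the state description over $P$ that it induces, hence satisfies $\bigvee_\delta(\delta\wedge\varphi)$, classical completeness gives $\varphi\vdash_\mathsf{C}\bigvee_\delta(\delta\wedge\varphi)$. Chaining by Definition~\ref{LogicDef}.8 gives $\varphi\vdash_\mathsf{C}\psi$.

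For the direction from (1) to (2), I would first reduce to a claim about individual state descriptions. Since $\vee$ is associative and commutative up to $\dashv\vdash_\mathsf{F}$, iterating proof-by-cases (Definition~\ref{LogicDef}.10) shows it suffices to prove $\delta\wedge\varphi\vdash_\mathsf{F}\psi$ for every $\delta\in sd(P)$. The engine is the following lemma: \emph{for every $\chi\in\mathcal{L}$ with $\mathsf{Prop}(\chi)\subseteq P$, if $v_\delta\models\chi$ then $\delta\vdash_\mathsf{F}\chi$, and if $v_\delta\not\models\chi$ then $\delta\vdash_\mathsf{F}\neg\chi$.} Granting the lemma, fix $\delta$. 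If $v_\delta\models\varphi$, then by classical soundness (1) gives $v_\delta\models\psi$, so $\delta\vdash_\mathsf{F}\psi$ by the lemma, and hence $\delta\wedge\varphi\vdash_\mathsf{F}\delta\vdash_\mathsf{F}\psi$ using Definition~\ref{LogicDef}.2,8. If $v_\delta\not\models\varphi$, then $\delta\vdash_\mathsf{F}\neg\varphi$ by the lemma, so $\delta\wedge\varphi\vdash_\mathsf{F}\varphi\wedge\neg\varphi$ (Definition~\ref{LogicDef}.2,3,8,9), whence $\delta\wedge\varphi\vdash_\mathsf{F}\psi$ by Definition~\ref{LogicDef}.7. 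Either way $\delta\wedge\varphi\vdash_\mathsf{F}\psi$, which completes the reduction.

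It remains to prove the lemma, by induction on $\chi$ using only the intro–elim rules. The atomic case is immediate from $\wedge$-elimination (Definition~\ref{LogicDef}.2,3,8), since each literal of $\delta$ is conjunctively available; the $\neg$-case uses double-negation introduction (Definition~\ref{LogicDef}.6); the positive $\wedge$- and $\vee$-cases use Definition~\ref{LogicDef}.9 and Definition~\ref{LogicDef}.4,5; and the case $v_\delta\not\models\chi_1\wedge\chi_2$ uses contraposition (Definition~\ref{LogicDef}.11) applied to $\chi_1\wedge\chi_2\vdash_\mathsf{F}\chi_i$. The one genuinely non-routine point — and what I expect to be the main obstacle — is the case $v_\delta\not\models\chi_1\vee\chi_2$, which requires the De Morgan inclusion $\neg a\wedge\neg b\vdash_\mathsf{F}\neg(a\vee b)$; unlike its converse, this is not an immediate consequence of contraposition. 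I would derive it by first proving $a\vdash_\mathsf{F}\neg(\neg a\wedge\neg b)$ and $b\vdash_\mathsf{F}\neg(\neg a\wedge\neg b)$ (from Definition~\ref{LogicDef}.2,3 via contraposition and double-negation introduction), combining them by proof-by-cases (Definition~\ref{LogicDef}.10) to get $a\vee b\vdash_\mathsf{F}\neg(\neg a\wedge\neg b)$, and then applying contraposition and double-negation introduction once more. Throughout, care is needed to check that no step secretly invokes Reductio or Reiteration, so that the argument stays inside $\vdash_\mathsf{F}$ rather than $\vdash_\mathsf{O}$ or $\vdash_\mathsf{C}$.
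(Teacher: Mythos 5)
Your proof is correct, but it takes a genuinely different route from the paper's. The paper proves the contrapositive of (1)$\Rightarrow$(2) semantically: if some disjunct satisfies $\delta\wedge\varphi\nvdash_\mathsf{F}\psi$, it invokes the relational completeness theorem (Theorem \ref{Completeness}) to obtain a pseudo-reflexive, pseudo-symmetric model $\mathcal{M}$ with a state $x$ forcing $\delta\wedge\varphi$ but not $\psi$, reads off a classical valuation $\pi$ from $x$, and shows by induction that $\tilde{\pi}(\chi)=1$ iff $\mathcal{M},x\Vdash\chi$ and $\tilde{\pi}(\chi)=0$ iff $\mathcal{M},x\Vdash\neg\chi$, whence $\varphi\nvdash_\mathsf{C}\psi$ by classical soundness. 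Your argument is the exact syntactic mirror of this: your lemma (if $v_\delta\models\chi$ then $\delta\vdash_\mathsf{F}\chi$, and if $v_\delta\not\models\chi$ then $\delta\vdash_\mathsf{F}\neg\chi$) plays the role of the paper's induction, and you correctly isolate the same crux, the case of a falsified disjunction: the paper handles it via pseudo-symmetry of $\comp$, while you handle it by deriving $\neg\alpha\wedge\neg\beta\vdash_\mathsf{F}\neg(\alpha\vee\beta)$ from contraposition, double-negation introduction, and proof by cases --- that derivation is valid and visibly uses only rules 1--11 of Definition \ref{LogicDef}, so no Reductio or Reiteration sneaks in. Your route buys self-containedness: it needs nothing beyond Definition \ref{LogicDef} and the soundness/completeness of classical logic for valuations, so it could be given before any semantics is introduced, and it establishes (2) directly rather than by contraposition. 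What it gives up is the paper's purpose in placing the proof where it does, namely to showcase Theorem \ref{Completeness} and to outsource all derivability checking to the model theory. The only presentational gaps are minor: the reduction to single disjuncts should note that rule 10 is iterated over whatever fixed bracketing of the big disjunction is intended, and the claim $\varphi\vdash_\mathsf{C}\bigvee_{\delta}(\delta\wedge\varphi)$ in the (2)$\Rightarrow$(1) direction rests on classical completeness exactly as the paper leaves implicit.
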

\noindent We will prove Proposition \ref{ClassicalToFundamental}  using the relational semantics in \S~\ref{RelSem}.

\subsection{Algebraic semantics}\label{AlgSem}

Algebraic semantics for fundamental logic can be given using bounded lattices---crucially not assumed to be distributive---equipped with what \cite{Dzik2006,Dzik2006b,Almeida2009} call a \textit{weak pseudocomplementation}.

\begin{definition}\label{WeakPseudo} A unary operation $\neg$ on a bounded lattice is a \textit{weak pseudocomplementation} if it satisfies:
\begin{enumerate}
\item semicomplementation: $a\wedge\neg a =0$;
\item dual self-adjointness: $a\leq \neg b$ implies $b\leq \neg a$.
\end{enumerate}
\end{definition}

The following easy folklore lemma relates Definitions \ref{WeakPseudo} and \ref{LogicDef}.

\begin{lemma}\label{DSA} $\neg$ is dually self-adjoint iff it is antitone \textnormal{(}$a\leq b$ implies $\neg b\leq \neg a$\textnormal{)} and double inflationary \textnormal{(}$a\leq\neg\neg a$\textnormal{)}.
\end{lemma}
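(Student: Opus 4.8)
The claim is a biconditional, so the plan is to prove the two directions separately, both by routine algebraic manipulation using only the lattice order and the hypotheses in play.

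For the forward direction, assume $\neg$ is dually self-adjoint, i.e.\ $a\leq\neg b$ implies $b\leq\neg a$. To get double inflationary, instantiate the hypothesis at $b=\neg a$: since $\neg a\leq\neg a$ trivially holds, dual self-adjointness (read as ``$c\leq\neg d$ implies $d\leq\neg c$'' with $c=\neg a$, $d=a$) yields $a\leq\neg\neg a$. To get antitone, suppose $a\leq b$. From $b\leq\neg\neg b$ (which we have just established) and $a\leq b$ we get $a\leq\neg\neg b$ by transitivity of $\leq$; now apply dual self-adjointness with the roles set so that ``$a\leq\neg(\neg b)$'' gives ``$\neg b\leq\neg a$.'' That is exactly antitonicity.

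For the converse, assume $\neg$ is antitone and double inflationary, and suppose $a\leq\neg b$. Applying antitonicity to $a\leq\neg b$ gives $\neg\neg b\leq\neg a$. Combining with the double inflationary law $b\leq\neg\neg b$ yields $b\leq\neg a$ by transitivity. This establishes dual self-adjointness, completing the proof.

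There is no real obstacle here; the only thing to be careful about is bookkeeping the direction of the inequalities when instantiating dual self-adjointness, and making sure the forward direction proves double inflationary \emph{before} using it in the antitonicity argument (the two sub-claims are proved in that order so that the second may invoke the first). Everything else is a one-line application of transitivity of the lattice order.
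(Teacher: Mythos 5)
Your proof is correct: both directions are the standard one-line applications of dual self-adjointness, antitonicity, and transitivity, and you rightly establish $a\leq\neg\neg a$ before using it in the antitonicity step. The paper itself states this lemma as easy folklore and gives no proof, so there is nothing to compare against; your argument is exactly the expected one.
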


We assume familiarity with how a class $\mathbb{C}$ of lattices with a unary operation $\neg$ provides algebraic semantics for $\mathcal{L}$ and a consequence relation ${\vDash_\mathbb{C}\,\subseteq\mathcal{L}\times\mathcal{L}}$. Standard techniques of algebraic logic then yield the following (see \cite{Holliday2023}).

\begin{proposition} Fundamental logic is sound and complete with respect to the class $\mathbb{WPL}$ of bounded lattices equipped with a weak pseudocomplementation: for all $\varphi,\psi\in\mathcal{L}$, $\varphi\vdash_\mathsf{F}\psi$ iff $\varphi\vDash_\mathbb{WPL}\psi$.
\end{proposition}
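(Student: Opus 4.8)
The plan is the standard Lindenbaum--Tarski argument, in two halves. For soundness I would check that each clause of Definition \ref{LogicDef} is validated by every $(L,\neg)\in\mathbb{WPL}$ under the relation ``$v(\varphi)\le v(\psi)$ for every valuation $v$''. Clauses 1--5 and 8--10 are exactly the standard order-theoretic facts about a lattice (reflexivity and transitivity of $\le$, and the universal properties of the binary meet and join); clause 7 is semicomplementation together with $0\le b$; clause 6 is the double-inflationary law and clause 11 is antitonicity, both of which follow from dual self-adjointness by Lemma \ref{DSA}. Since $\vDash_\mathbb{WPL}$ visibly satisfies all of clauses 1--11 and $\vdash_\mathsf{F}$ is by definition the least such relation, we get $\varphi\vdash_\mathsf{F}\psi\Rightarrow\varphi\vDash_\mathbb{WPL}\psi$.

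For completeness I would build the Lindenbaum--Tarski algebra. Put $\varphi\equiv\psi$ iff $\varphi\vdash_\mathsf{F}\psi$ and $\psi\vdash_\mathsf{F}\varphi$; by clauses 1 and 8 this is an equivalence relation, and it is a congruence for $\wedge,\vee,\neg$: compatibility with $\wedge$ and $\vee$ follows from the monotonicity of these connectives, itself derived from clauses 2--5 and 8--10 in the usual way, and compatibility with $\neg$ is immediate from clause 11. Writing $[\varphi]$ for the $\equiv$-class of $\varphi$ and setting $[\varphi]\le[\psi]$ iff $\varphi\vdash_\mathsf{F}\psi$, the resulting poset $L_\mathsf{F}=\mathcal{L}/{\equiv}$ is a lattice with $[\varphi]\wedge[\psi]=[\varphi\wedge\psi]$ and $[\varphi]\vee[\psi]=[\varphi\vee\psi]$, again by clauses 2--5 and 8--10. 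It is moreover bounded even though $\mathcal{L}$ has no constants: clause 7 shows that all formulas $\varphi\wedge\neg\varphi$ lie in one $\equiv$-class, which is then a least element $0$, while the short derivation ``$\varphi\wedge\neg\varphi\vdash_\mathsf{F}\neg\psi$ (clause 7), hence $\neg\neg\psi\vdash_\mathsf{F}\neg(\varphi\wedge\neg\varphi)$ (clause 11), hence $\psi\vdash_\mathsf{F}\neg(\varphi\wedge\neg\varphi)$ (clauses 6, 8)'' shows the class of $\neg(\varphi\wedge\neg\varphi)$ is a greatest element $1$.

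It then remains to check $(L_\mathsf{F},\neg)\in\mathbb{WPL}$, where $\neg[\varphi]=[\neg\varphi]$ (well-defined by clause 11). Semicomplementation is $[\varphi]\wedge[\neg\varphi]=[\varphi\wedge\neg\varphi]=0$, since $\varphi\wedge\neg\varphi$ sits in the $0$-class by clause 7. For dual self-adjointness, if $[\varphi]\le[\neg\psi]$, i.e.\ $\varphi\vdash_\mathsf{F}\neg\psi$, then clause 11 gives $\neg\neg\psi\vdash_\mathsf{F}\neg\varphi$, clause 6 gives $\psi\vdash_\mathsf{F}\neg\neg\psi$, and clause 8 yields $\psi\vdash_\mathsf{F}\neg\varphi$, i.e.\ $[\psi]\le[\neg\varphi]$. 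Finally the canonical valuation $v(p)=[p]$ satisfies $v(\varphi)=[\varphi]$ for all $\varphi$ by induction on formulas, so $\varphi\vDash_\mathbb{WPL}\psi$ forces $[\varphi]\le[\psi]$ in $L_\mathsf{F}$, that is $\varphi\vdash_\mathsf{F}\psi$. Combined with soundness, this gives the biconditional.

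There is no serious obstacle: this is routine algebraic logic, and indeed it is precisely the ``standard techniques'' referred to just before the statement. The only points needing any care are (i) deriving monotonicity of $\wedge$ and $\vee$ and the lattice identities (commutativity, associativity, idempotence, absorption) purely from clauses 1--5 and 8--10 --- the familiar binary-logic bookkeeping --- and (ii) exhibiting the bounds $0$ and $1$ in $L_\mathsf{F}$ despite the absence of constants in $\mathcal{L}$, for which clause 7 and the little derivation $\psi\vdash_\mathsf{F}\neg(\varphi\wedge\neg\varphi)$ above do the work. One could also simply invoke the general algebraic-completeness machinery and omit these verifications.
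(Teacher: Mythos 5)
Your proof is correct and is exactly the ``standard techniques of algebraic logic'' that the paper invokes without spelling out: soundness by checking that $\vDash_{\mathbb{WPL}}$ satisfies clauses 1--11, completeness via the Lindenbaum--Tarski quotient, with the two genuinely non-automatic points (the bounds $0=[\varphi\wedge\neg\varphi]$ and $1=[\neg(\varphi\wedge\neg\varphi)]$ in a constant-free language, and the verification of dual self-adjointness from clauses 6, 8, and 11) handled correctly. Nothing to add.
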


\subsection{Relational semantics}\label{RelSem}

A relational semantics for fundamental logic \cite{Holliday2023} can be given using Plo\v{s}\v{c}ica's \cite{Ploscica1995} approach to the representation of lattices, Birkhoff's \cite{Birkhoff1940} approach to the representation of negation, and appropriate additional first-order conditions on the relations. For comparisons with related works \cite{Almeida2009,Chiara2002,Dosen1984,Dosen1986,Dosen1999,Dunn1993,Dunn1996,Dunn1999,Dunn2005,Dzik2006,Dzik2006b,Massas2023,Vakarelov1989,Holliday2021,Zhong2021}, as well as examples, see \cite[\S~4]{Holliday2023}. Here we only quickly summarize the key facts concerning this semantics.

\begin{definition}\label{RelFrame} A \textit{relational frame} is a pair $(X,\comp)$ of a nonempty set $X$ and binary relation $\comp$ on $X$.
\end{definition}
If $x\comp y$, we say that $x$ is \textit{open to} $y$. As explained in Remark 4.2 of \cite{Holliday2023}, this reading of $\comp$ is associated with a four-way distinction between acceptance, non-acceptance, rejection, and acceptance of the negation of a proposition $A$, where propositions are fixpoints of the operation $c_\comp$ in Proposition \ref{RelSemFacts} below:
\begin{itemize}
\item $x$ \textit{accepts} $A$ if $x\in A$;
\item $x$ does not accept $A$ if $x\not\in A$;
\item $x$ \textit{rejects} $A$ if for all $y\compflip x$, $y\not\in A$;
\item $x$ accepts \textit{the negation of $A$} if for all $y\comp x$, $y\not\in A$.
\end{itemize}
Non-acceptance should not entail rejection, since a state may be completely noncommittal about $A$; and rejection should not entail acceptance of the negation, since we would like to accommodate, e.g., intuitionists who reject instances of excluded middle but of course do not accept their negations. Given these distinctions, we can provide more intuition to the notion of ``openness'' intended for $\comp$: $x$ is open to $y$ iff $x$ \textit{does not reject any proposition that $y$ accepts}.

\begin{proposition}\label{RelSemFacts} Let $(X,\comp)$ be a relational frame.
\begin{enumerate}
\item The following operation $c_\comp: \wp (X)\to \wp(X)$ is a closure operator:
\[c_\comp (A)=\{x\in X\mid \forall y\comp x\,\exists z\compflip y: z\in A\} .\]
\item The fixpoints of $c_\comp$, i.e., those $A\subseteq X$ with $c_\comp (A)=A$, form a complete lattice $\lat(X,\comp)$ with meet as intersection and join as \emph{closure of} union:
\[\underset{i\in I}{\bigvee }A_i=\{x\in X\mid \forall y\comp x\,\exists z \compflip y: z\in \underset{i\in I}{\bigcup} A_i\}.\]
\item The operation  $\neg_\comp: \wp(X)\to\wp(X)$ defined by
\[\neg_\comp(A)=\{x\in X\mid \forall y\comp x, y\not\in A\}\]
sends $c_\comp$-fixpoints to $c_\comp$-fixpoints, is antitone with respect to $\subseteq$, and sends the $1$ of $\lat(X,\comp)$, namely $X$, to the $0$ of $\lat(X,\comp)$, namely $c_\comp(\varnothing)$.
\end{enumerate}
\end{proposition}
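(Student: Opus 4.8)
The plan is to treat the three parts in turn, pushing as much as possible onto standard closure-operator theory so that the only frame-specific work is a handful of short quantifier manipulations. Throughout, recall that ``$y\comp x$'' reads ``$y$ is open to $x$'' and ``$z\compflip y$'' reads ``$y$ is open to $z$'', so that $x\in c_\comp(A)$ asserts: every $y$ open to $x$ is itself open to some element of $A$.

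First I would check that $c_\comp$ is a closure operator. \emph{Extensivity}, $A\subseteq c_\comp(A)$: if $x\in A$ and $y\comp x$, then $x$ itself witnesses $\exists z\compflip y\colon z\in A$. \emph{Monotonicity} is immediate from the form of the definition. \emph{Idempotence} reduces, via the first two properties, to $c_\comp(c_\comp(A))\subseteq c_\comp(A)$: given $x\in c_\comp(c_\comp(A))$ and $y\comp x$, pick $z$ with $y\comp z$ and $z\in c_\comp(A)$, then apply $z\in c_\comp(A)$ to this \emph{same} $y$ (which is open to $z$) to obtain $w$ with $y\comp w$ and $w\in A$; thus $y$ witnesses the condition for $x\in c_\comp(A)$. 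This re-use of the witness $y$ is the one place where the nested $\forall\exists$ structure is doing real work, and it is the step I would flag as the main (if still modest) obstacle.

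For part 2, I would invoke the standard fact that the fixpoints of a closure operator on a powerset form a complete lattice with meet given by intersection and arbitrary join given by the closure of the union; the only thing to verify is that the $c_\comp$-fixpoints are closed under arbitrary intersection, which follows since for fixpoints $A_i$, extensivity gives $\bigcap_i A_i\subseteq c_\comp(\bigcap_i A_i)$ while monotonicity gives $c_\comp(\bigcap_i A_i)\subseteq c_\comp(A_j)=A_j$ for every $j$. The displayed join formula is then just $c_\comp(\bigcup_i A_i)$ written out. Here $X$ is the top element (trivially a fixpoint) and $c_\comp(\varnothing)$ is the bottom, being the least fixpoint since $c_\comp(\varnothing)\subseteq c_\comp(A)=A$ for every fixpoint $A$.

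For part 3, I would first show that $\neg_\comp(A)$ is always a $c_\comp$-fixpoint (in particular when $A$ is), by essentially the idempotence argument again: given $x\in c_\comp(\neg_\comp(A))$ and $y\comp x$, pick $z$ with $y\comp z$ and $z\in\neg_\comp(A)$; since $y$ is open to $z$, membership of $z$ in $\neg_\comp(A)$ yields $y\notin A$, so $x\in\neg_\comp(A)$, and the reverse inclusion is extensivity. Antitonicity of $\neg_\comp$ with respect to $\subseteq$ is immediate from the definition. Finally, $\neg_\comp(X)=\{x\mid \text{no } y \text{ is open to } x\}$ because every $y$ lies in $X$; and $c_\comp(\varnothing)$ admits the same description, since $\exists z\compflip y\colon z\in\varnothing$ is never satisfied, so $x\in c_\comp(\varnothing)$ iff $x$ has no element open to it. Hence $\neg_\comp$ sends the $1$ of $\lat(X,\comp)$ to its $0$, completing the proof.
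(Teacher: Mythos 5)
Your proof is correct and is exactly the standard argument; the paper itself states this proposition without proof (deferring to the cited earlier work), and your verification of extensivity, monotonicity, idempotence via re-use of the witness $y$, the standard complete-lattice-of-fixpoints facts, and the computation $\neg_\comp(X)=c_\comp(\varnothing)=\{x\mid \text{no } y\comp x\}$ is what that reference does. Nothing to add.
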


For fundamental logic, we want $\neg_\comp$ to have additional properties. For the following result, say that an $x\in X$ is \textit{non-absurd} if there is some $y\comp x$. Given $x,z\in X$, say that $z$ \textit{pre-refines} $x$ if for all $w\comp z$, we have $w\comp x$. It follows that for all $c_\comp$-fixpoints $A$, if $x\in A$, then $z\in A$ \cite[Lemma 4.12]{Holliday2023}.

\begin{proposition}[\cite{Holliday2023}, Proposition 4.14.1-2]\label{CorrespondenceProp} For any relational frame $(X,\comp)$, in each of the following pairs, \textnormal{(a)} and \textnormal{(b)} are equivalent: 
\begin{enumerate}
\item\label{CorrespondenceProp1} 
\begin{enumerate}
\item for all $c_\comp$-fixpoints $A$, we have $A\cap \neg_\comp A=0$; 
\item \emph{pseudo-reflexivity}: for all non-absurd $x\in X$, there is a $z\comp x$ that pre-refines $x$.
\end{enumerate}
\item\label{CorrespondenceProp2} \begin{enumerate}
\item for all $c_\comp$-fixpoints $A$, we have $A\subseteq\neg_\comp\neg_\comp A$;
\item \emph{pseudo-symmetry}: for all $x\in X$ and $y\comp x$, there is a $z\comp y$ that pre-refines $x$.
\end{enumerate}
\end{enumerate}
\end{proposition}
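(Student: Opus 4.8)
The plan is to establish both biconditionals by the same two-step recipe, after isolating a small piece of common machinery. For $x\in X$, write $\hat x$ for the set of $z\in X$ that pre-refine $x$. First I would check, by unfolding the definition of $c_\comp$ at the singleton $\{x\}$, that $\hat x=c_\comp(\{x\})$; since $c_\comp$ is a closure operator (Proposition \ref{RelSemFacts}.1), this makes $\hat x$ a $c_\comp$-fixpoint, and it contains $x$ since $c_\comp$ is inflationary. Together with the fact recalled just before the statement (\cite[Lemma 4.12]{Holliday2023}) --- that $z\in\hat x$ and $x\in A$ imply $z\in A$ for any $c_\comp$-fixpoint $A$ --- this is everything one needs: in each biconditional, the direction (b)$\Rightarrow$(a) will use this transfer of pre-refiners into fixpoints, while (a)$\Rightarrow$(b) will come from instantiating the universal condition (a) at the single fixpoint $A=\hat x$.

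For part \ref{CorrespondenceProp1}, I would first note that since $0=c_\comp(\varnothing)$ is the bottom of $\lat(X,\comp)$, which contains both $A$ and $\neg_\comp A$, we always have $0\subseteq A\cap\neg_\comp A$, so (a) reduces to the inclusion $A\cap\neg_\comp A\subseteq 0$; and unfolding $c_\comp(\varnothing)$ identifies $0$ with the set of absurd points, i.e.\ the complement of the non-absurd ones. Then (b)$\Rightarrow$(a): if $x\in A\cap\neg_\comp A$ for a $c_\comp$-fixpoint $A$ and $x$ were non-absurd, pseudo-reflexivity would yield a $z\comp x$ that pre-refines $x$, hence $z\in A$, contradicting $x\in\neg_\comp A$; so $x$ must be absurd, i.e.\ $x\in 0$. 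And (a)$\Rightarrow$(b): given a non-absurd $x$, apply (a) to $A=\hat x$; since $x\in\hat x$ but $x\notin 0$, we get $x\notin\neg_\comp\hat x$, which by the definition of $\neg_\comp$ produces a $z\comp x$ with $z\in\hat x$, that is, a $z\comp x$ pre-refining $x$.

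For part \ref{CorrespondenceProp2}, I would unfold $\neg_\comp$ twice to obtain $\neg_\comp\neg_\comp A=\{x\mid\forall y\comp x\;\exists z\comp y:z\in A\}$, so that $A\subseteq\neg_\comp\neg_\comp A$ says exactly that for every $x\in A$ and every $y\comp x$ there is some $z\comp y$ in $A$. Then (b)$\Rightarrow$(a): given a $c_\comp$-fixpoint $A$, $x\in A$, and $y\comp x$, pseudo-symmetry supplies a $z\comp y$ pre-refining $x$, and $z\in A$ because $x\in A$; hence $x\in\neg_\comp\neg_\comp A$. And (a)$\Rightarrow$(b): given $x\in X$ and $y\comp x$, apply (a) to $A=\hat x$; since $x\in\hat x$ we get $x\in\neg_\comp\neg_\comp\hat x$, hence a $z\comp y$ with $z\in\hat x$, that is, a $z\comp y$ pre-refining $x$.

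The only steps carrying any content are the two unfoldings --- $\hat x=c_\comp(\{x\})$ and the formula for $\neg_\comp\neg_\comp A$ --- and, in part \ref{CorrespondenceProp1}, the remark that only the inclusion $\subseteq 0$ is really at stake. I expect the main (modest) pitfall to be keeping the orientation of $\comp$ and $\compflip$ straight through these unfoldings; the one idea worth isolating is the use of the principal fixpoint $c_\comp(\{x\})$ as the test set witnessing the (a)$\Rightarrow$(b) directions.
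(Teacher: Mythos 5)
Your proof is correct: the identification $\hat x=c_\comp(\{x\})$ is exactly the set of pre-refiners of $x$, the transfer of pre-refiners into fixpoints is the Lemma 4.12 fact recalled just before the statement, and instantiating (a) at this principal fixpoint is precisely the standard way these two correspondences are obtained (the present paper only cites the result from the earlier work rather than reproving it, and your argument matches that source's approach). No gaps.
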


The facts above yield the soundness of fundamental logic with respect to relational frames that are pseudo-reflexive and pseudo-symmetric, interpreting $\mathcal{L}$ in the algebras $(\lat(X,\comp),\neg_\comp)$. That is, a \textit{relational model} adds to a relational frame $(X,\comp)$ a valuation $V$ interpreting propositional variables as fixpoints of $c_\comp$. The forcing relation $\Vdash$ between states $x\in X$ and formulas is defined in the obvious way in light of Proposition \ref{RelSemFacts}(ii)-(iii) \cite[Definition~4.19]{Holliday2023}.

 For completeness, we use the following representation theorem.

\begin{theorem}[\cite{Holliday2023}, Theorems 4.24 and 4.30] Any bounded \textnormal{(}resp.~complete\textnormal{)} lattice  $L$ equipped with an antitone operation $\neg$ sending $1$ to $0$ embeds into \textnormal{(}resp.~is isomorphic to\textnormal{)} $(\lat(X,\comp),\neg_\comp)$ for some relational frame~$(X,\comp)$. 

Moreover, if $\neg$ satisfies  $a\leq\neg\neg a$ \textnormal{(}resp.~$a\wedge\neg a=0$\textnormal{)} for all $a\in L$, then $\comp$ may be taken to be  pseudo-symmetric \textnormal{(}resp.~pseudo-reflexive---in fact, reflexive\textnormal{)}.
\end{theorem}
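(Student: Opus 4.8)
The plan is a canonical-frame construction (cf.\ \cite{Holliday2023}): one combines Plo\v{s}\v{c}ica's representation of (possibly non-distributive) bounded lattices with Birkhoff's polarity representation of an antitone operation. From $(L,\neg)$ one builds a frame $(X,\comp)$ whose points are filter--ideal pairs of $L$ --- for the bounded/embedding case, essentially all pairs $p=(F_p,I_p)$ with $F_p$ a filter, $I_p$ an ideal, and $F_p\cap I_p=\varnothing$, together with the ``absurd'' point $(L,\varnothing)$; for the complete/isomorphism case, the \emph{maximal} such pairs. One defines $\comp$ by a first-order condition that encodes $\neg$ in the manner of Birkhoff --- roughly, $p\comp q$ says that $q$ accepts nothing whose negation $p$ accepts --- while carrying enough information about the ideal components to keep the representation faithful for non-distributive $L$; getting this condition exactly right is the subtle point, and is where Plo\v{s}\v{c}ica's approach enters. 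Finally one sets $\widehat a=\{p\in X\mid a\in F_p\}$.

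The core is to show that $\widehat{(\cdot)}\colon L\to(\lat(X,\comp),\neg_\comp)$ is a homomorphism of bounded lattices with $\neg$; in particular this forces each $\widehat a$ to be a $c_\comp$-fixpoint. For $1$ and $0$: $\widehat 1=X$, while $\widehat 0=\{p\mid F_p=L\}$ must equal $c_\comp(\varnothing)$, the set of points with no $\comp$-predecessor --- this is the reason for the absurd point. Preservation of $\wedge$, i.e.\ $\widehat{a\wedge b}=\widehat a\cap\widehat b$, is immediate from the filter axioms. For $\neg$, the inclusion $\widehat{\neg a}\subseteq\neg_\comp\widehat a$ falls out of the definition of $\comp$ and antitonicity of $\neg$; for the converse one must, given $\neg a\notin F_p$, exhibit a $q\comp p$ with $a\in F_q$, and antitonicity makes ${\uparrow}a$, paired with a suitable ideal, serve as $q$. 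For $\vee$, one inclusion of $\widehat{a\vee b}=c_\comp(\widehat a\cup\widehat b)$ is immediate, since the right-hand side is the least fixpoint above $\widehat a$ and $\widehat b$ and the left-hand side is such a fixpoint; the other inclusion is the non-distributive heart of the proof: given $a\vee b\in F_p$ and any $q\comp p$, if neither ${\uparrow}a$ nor ${\uparrow}b$ supplies the witness $r$ demanded by the definition of $c_\comp$, then $F_q$ contains $c$ and $d$ with $a\leq\neg c$ and $b\leq\neg d$, so $c\wedge d\in F_q$ and, by antitonicity, $a\vee b\leq\neg(c\wedge d)$; hence $\neg(c\wedge d)\in F_p$, contradicting $q\comp p$.

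Injectivity --- indeed, reflection of $\leq$ --- is a Zorn's Lemma argument: if $a\not\leq b$, then $({\uparrow}a,{\downarrow}b)$ is a disjoint filter--ideal pair, which extends to a point $p\in X$ with $a\in F_p$ and $b\in I_p$, so $p\in\widehat a\setminus\widehat b$. This gives the embedding statements. For the complete case one also shows that $\widehat{(\cdot)}$ is onto the $c_\comp$-fixpoints: given a fixpoint $A$, take $a=\bigvee\{b\mid\widehat b\subseteq A\}$ and check $\widehat a=A$, using the maximality of the pairs in $X$ together with the join-preservation just established. For the ``moreover'' clauses I would verify the frame conditions on the canonical frame directly. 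If $a\leq\neg\neg a$ for all $a$, then $\comp$ is pseudo-symmetric: given $q\comp p$, the point $p$ itself can serve as the pre-refining witness $z$ in the pseudo-symmetry condition of Proposition~\ref{CorrespondenceProp}, since $p$ trivially pre-refines itself and $p\comp q$ holds --- if some $b\in F_p$ had $\neg b\in F_q$, then $\neg\neg b\in F_p$ by $b\leq\neg\neg b$, so taking $c=\neg b$ would contradict $q\comp p$. If $a\wedge\neg a=0$ for all $a$, then $\comp$ is reflexive: $a,\neg a\in F_p$ would give $0\in F_p$, hence $F_p=L$ (so in this case no absurd point is needed), contradicting $F_p\cap I_p=\varnothing$; thus $p\comp p$, and reflexivity gives pseudo-reflexivity at once. (Alternatively, in the complete case these conditions follow from Proposition~\ref{CorrespondenceProp} once surjectivity is known, since the algebraic conditions there then hold of \emph{all} fixpoints.)

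The hard part, as in representation theorems of this kind generally, is choosing a single relation $\comp$ --- and the right collection of points --- that simultaneously represents a possibly non-distributive bounded lattice faithfully (the Plo\v{s}\v{c}ica ingredient) and represents $\neg$ correctly (the Birkhoff ingredient), so that $\wedge$-, $\vee$-, and $\neg$-preservation all hold at once and every $\widehat a$ is genuinely $c_\comp$-closed. The $\vee$-preservation step is where non-distributivity really bites, and, in the complete case, establishing surjectivity of $\widehat{(\cdot)}$ is the other place where substantive work is required.
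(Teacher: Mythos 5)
Your overall architecture matches the paper's: points are filter--ideal pairs, $\comp$ blends Plo\v{s}\v{c}ica's lattice representation with Birkhoff's treatment of $\neg$, the map is $a\mapsto\widehat a$, injectivity is a Zorn argument, and the ``moreover'' clauses are verified directly on the canonical frame. But two of your concrete choices would fail. First, the complete case: taking the \emph{maximal} disjoint filter--ideal pairs yields a completion of canonical-extension type (cf.\ the remark preceding Theorem~\ref{GenModalRep1}), not the MacNeille completion, and a complete lattice is in general not isomorphic to its canonical extension (already for complete atomless Boolean algebras). So your surjectivity step---``take $a=\bigvee\{b\mid \widehat b\subseteq A\}$ and check $\widehat a=A$''---has no reason to go through: nothing forces $a\in F_p$ for every $p\in A$ unless the sets $\widehat b$ are join-dense in $\lat(X,\comp)$, which is exactly what maximality does not buy you. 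The paper's complete case instead uses \emph{principal} pairs, i.e.\ pairs of lattice elements $(a,b)$ with $\neg a\leq b$ and $x\comp y$ iff $y_0\not\leq x_1$ (the frame of Theorem~\ref{CompModalRep} with the modalities stripped off); the ``separating'' conditions of Definition~\ref{Good} are what Proposition~\ref{CompRep} converts into an isomorphism, $\lat(X,\comp)$ being the MacNeille completion of $L$ and hence $L$ itself when $L$ is complete.

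Second, the bounded case: requiring $F_p\cap I_p=\varnothing$ is too strong for the generality of the statement. The relation that makes the representation faithful for non-distributive $L$, and makes each $\widehat b$ a $c_\comp$-fixpoint, must be the ideal-based one, $(F,I)\comp(F',I')$ iff $I\cap F'=\varnothing$; but on disjoint pairs that relation is reflexive, hence pseudo-reflexive, hence by Proposition~\ref{CorrespondenceProp}(\ref{CorrespondenceProp1}) every fixpoint $A$ satisfies $A\cap\neg_\comp A=0$, which would force $a\wedge\neg a=0$ in $L$---not assumed here. The paper's point set is $\{(F,I)\mid \{\neg a\mid a\in F\}\subseteq I\}$ with \emph{no} disjointness requirement (see Theorem~\ref{GenModalRep1} and the footnote there), which also removes the need for your ad hoc absurd point $(L,\varnothing)$ (a pair that, incidentally, violates the compatibility condition and is not $\comp$-minimal under the ideal-based relation). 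Relatedly, you never settle on a single definition of $\comp$, and your detailed steps are pulled in opposite directions by this: your $\vee$-preservation and pseudo-symmetry arguments presuppose the purely negation-based relation ($a\in F_q\Rightarrow\neg a\notin F_p$), under which clause~(\ref{Good3}) of Definition~\ref{Good} fails and $\widehat b$ need not be $c_\comp$-closed for non-distributive $L$; under the correct ideal-based relation, the $\vee$ step is instead the short observation that if neither $\mathord{\uparrow}a$ nor $\mathord{\uparrow}b$ yields a witness then $a,b\in I_q$, so $a\vee b\in I_q\cap F_p$, contradicting $q\comp p$, and the pseudo-symmetry witness is not $p$ itself but the pair obtained from $p$ by shrinking its ideal to the one generated by $\{\neg a\mid a\in F_p\}$, using dual self-adjointness to check it is $\comp$-related to $q$.
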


\begin{theorem}[\cite{Holliday2023}]\label{Completeness} Fundamental logic is sound and complete with respect to the class of relational frames that are pseudo-reflexive and pseudo-symmetric.
\end{theorem}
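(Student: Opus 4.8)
The plan is to derive Theorem~\ref{Completeness} by combining the algebraic completeness of fundamental logic (completeness with respect to $\mathbb{WPL}$) with the representation theorem for lattices with antitone negation just stated. Soundness is the routine direction: by Proposition~\ref{RelSemFacts}(ii)--(iii) every relational frame $(X,\comp)$ yields an algebra $(\lat(X,\comp),\neg_\comp)$ in which $\neg_\comp$ is antitone and sends $1$ to $0$, and by Proposition~\ref{CorrespondenceProp} the pseudo-reflexivity and pseudo-symmetry conditions guarantee $A\cap\neg_\comp A = 0$ and $A\subseteq\neg_\comp\neg_\comp A$ for all fixpoints $A$; by Lemma~\ref{DSA} these last two facts (together with antitonicity) say exactly that $\neg_\comp$ is a weak pseudocomplementation, so $(\lat(X,\comp),\neg_\comp)\in\mathbb{WPL}$. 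Since fundamental logic is sound with respect to $\mathbb{WPL}$, and the forcing relation $\Vdash$ in a relational model is by construction the interpretation of $\mathcal{L}$ in the associated algebra under the valuation $V$, any $\varphi\vdash_\mathsf{F}\psi$ is validated on every pseudo-reflexive, pseudo-symmetric relational model.

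For completeness, suppose $\varphi\not\vdash_\mathsf{F}\psi$. By algebraic completeness there is a lattice $L$ with a weak pseudocomplementation $\neg$ and a homomorphic assignment (valuation) $v$ into $L$ such that $v(\varphi)\not\leq v(\psi)$. Now apply the representation theorem: since $\neg$ is a weak pseudocomplementation, it is antitone, sends $1$ to $0$, satisfies $a\leq\neg\neg a$, and satisfies $a\wedge\neg a = 0$; hence $L$ embeds into $(\lat(X,\comp),\neg_\comp)$ for some relational frame $(X,\comp)$ that may be taken to be both pseudo-symmetric (from $a\leq\neg\neg a$) and pseudo-reflexive (from $a\wedge\neg a = 0$). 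Let $e\colon L\hookrightarrow \lat(X,\comp)$ be the embedding, which is a lattice homomorphism commuting with $\neg$ and $\neg_\comp$. Define a valuation $V$ on $(X,\comp)$ by $V(p) = e(v(p))$ for each propositional variable $p$; this is a legitimate relational valuation since each $e(v(p))$ is a $c_\comp$-fixpoint. A straightforward induction on formula complexity, using that $e$ is a homomorphism for $\wedge,\vee,\neg$, shows that for every formula $\chi$ the set of states forcing $\chi$ equals $e(v(\chi))$. Since $e$ is injective and order-preserving/reflecting, $v(\varphi)\not\leq v(\psi)$ gives $e(v(\varphi))\not\subseteq e(v(\psi))$, so there is a state $x\in X$ forcing $\varphi$ but not $\psi$; as $(X,\comp)$ is pseudo-reflexive and pseudo-symmetric, this witnesses $\varphi\not\vdash$ in the relational semantics.

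The only mild subtlety—and the step I would be most careful about—is the inductive correspondence between the semantic forcing clause for $\neg$ in a relational model and the operation $\neg_\comp$, together with the clause for $\vee$ and the closure-of-union join: one must check that the "obvious" forcing definition referenced after Proposition~\ref{CorrespondenceProp} genuinely matches $\neg_\comp$ and the join operation of $\lat(X,\comp)$ on the nose, so that the embedding $e$ transports forcing correctly. This is entirely routine given Proposition~\ref{RelSemFacts}, but it is where the argument actually uses the specific shape of the relational semantics rather than just abstract algebra. Everything else is an assembly of results already in hand: algebraic soundness and completeness, the representation theorem, and the frame correspondences of Proposition~\ref{CorrespondenceProp}.
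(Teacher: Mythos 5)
Your proposal is correct and is essentially the paper's own route: soundness via Propositions~\ref{RelSemFacts} and \ref{CorrespondenceProp} showing that $(\lat(X,\comp),\neg_\comp)$ lands in $\mathbb{WPL}$, and completeness by applying the representation theorem to a $\mathbb{WPL}$-countermodel (equivalently, to the Lindenbaum--Tarski algebra), exactly as the paper does for the modal analogue at the end of \S~\ref{FML}. The one point you flag as a subtlety---that the single frame produced by the representation can be taken to be pseudo-reflexive and pseudo-symmetric \emph{simultaneously} when $\neg$ is a weak pseudocomplementation---is indeed how the construction works (cf.\ parts (\ref{GenModalRep2a}) and (\ref{GenModalRep2d}) of Theorem~\ref{GenModalRep2}), so there is no gap.
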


As an example application of Theorem \ref{Completeness}, let us prove Proposition \ref{ClassicalToFundamental}.

\begin{proof} From 2 to 1, $\varphi\vdash_\mathsf{C} \underset{\delta\in sd(\mathsf{Prop}(\varphi)\cup \mathsf{Prop}(\psi))}{\bigvee}{(\delta \wedge \varphi)}\vdash_{\mathsf{F}} \psi$ and hence $\varphi\vdash_\mathsf{C}\psi$.\\

Now suppose 2 does not hold. Then there is $\delta\in sd(\mathsf{Prop}(\varphi)\cup \mathsf{Prop}(\psi))$ such that $\delta\wedge\varphi\nvdash_\mathsf{F}\psi$, for otherwise 2 holds using Definition \ref{LogicDef}.10. Then by Theorem \ref{Completeness}, there is a  pseudo-reflexive and pseudo-symmetric model $\mathcal{M}=(X,\comp,V)$ and $x\in X$ such that $\mathcal{M},x\Vdash\delta\wedge\varphi$ but $\mathcal{M},x\nVdash\psi$. Define a valuation $\pi:\mathsf{Prop}\to \{0,1\}$ by $\pi(p)=1$ if $\mathcal{M},x\Vdash p$ and $0$ otherwise. Let $\tilde{\pi}:\mathcal{L}\to\{0,1\}$ be the usual recursively defined extension of $\pi$ as in classical semantics.

We prove by induction that for any propositional formula $\chi$ with $\mathsf{Prop}(\chi)\subseteq \mathsf{Prop}(\varphi)\cup\mathsf{Prop}(\psi)$, we have:
\begin{enumerate}
\item[(a)] $\tilde{\pi}(\varphi)=1$ iff $\mathcal{M},x\Vdash\chi$; (b) $\tilde{\pi}(\varphi)=0$ iff $\mathcal{M},x\Vdash\neg\chi$.
\end{enumerate}

Suppose $\chi$ is a propositional variable $p$. Then since $\chi\in \mathsf{Prop}(\varphi)\cup\mathsf{Prop}(\psi)$, from $\mathcal{M},x\Vdash\delta$ it follows that either $\mathcal{M},x\Vdash p$, in which case $\tilde{\pi}(p)=1$ by definition of $\pi$, or $\mathcal{M},x\Vdash\neg p$, which implies $\mathcal{M},x\nVdash p$ by the pseudo-reflexivity of $\comp$, so $\tilde{\pi}(p)=0$ by definition of $\pi$. This establishes (a) and (b) for $p$.

Suppose $\chi$ is $\neg\alpha$.  If $\tilde{\pi}(\alpha)=0$, then by the inductive hypothesis, $\mathcal{M},x\Vdash \neg\alpha$, in line with $\tilde{\pi}(\neg\alpha)=1$. On the other hand, if $\tilde{\pi}(\alpha)=1$, then by the inductive hypothesis, $\mathcal{M},x\Vdash \alpha$, which by the pseudo-symmetry of $\comp$ implies $\mathcal{M},x\Vdash \neg\neg\alpha$, in line with $\tilde{\pi}(\neg\alpha)=0$. Thus, (a) and (b) hold for $\neg\alpha$.

Suppose $\chi$ is $\alpha\wedge\beta$. Simply consider the four possible truth assignments to $\alpha,\beta$ by $\tilde{\pi}$ and use the fact that $\mathcal{M},x\Vdash \neg\gamma_i$ implies  $\mathcal{M},x\Vdash \neg (\gamma_1\wedge\gamma_2)$.

Finally, suppose $\chi$ is $\alpha\vee\beta$. Again consider the four possible truth assignments to $\alpha,\beta$ by $\tilde{\pi}$ and use the fact that $\mathcal{M},x\Vdash \neg\alpha\wedge\neg\beta$ implies $\mathcal{M},x\Vdash \neg (\alpha\vee\beta)$. To see this, suppose $\mathcal{M},x\Vdash \neg\alpha\wedge\neg\beta$ and $y\comp x$. For contradiction, suppose $\mathcal{M},y\Vdash \alpha\vee \beta$. Given  $y\comp x$ and the pseudo-symmetry of $\comp$, there is a $z\comp y$ that pre-refines $x$, so $\mathcal{M},z\Vdash \neg\alpha\wedge\neg\beta$. Since $\mathcal{M},y\Vdash \alpha\vee \beta$ and $z\comp y$, there is a $w\compflip z$ with $\mathcal{M},w\Vdash \alpha$ or $\mathcal{M},w\Vdash\beta$. Given  $w\compflip z$  and the pseudo-symmetry of $\comp$, there is a $u\comp z$ that pre-refines $w$, so  $\mathcal{M},u\Vdash \alpha$ or $\mathcal{M},u\Vdash\beta$. But this contradicts $u\comp z$ together with $\mathcal{M},z\Vdash \neg\alpha\wedge\neg\beta$. Thus, we conclude $\mathcal{M},y\nVdash \alpha\vee \beta$, which shows that $\mathcal{M},x\Vdash \neg (\alpha\vee\beta)$.

Now given (a), $\mathcal{M},x\Vdash \varphi$ implies $\tilde{\pi}(\varphi)=1$, and $\mathcal{M},x\nVdash \psi$ implies $\tilde{\pi}(\psi)\neq 1$. Then by the soundness of classical logic with respect to its standard valuation semantics, $\varphi\nvdash_{\mathsf{C}}\psi$, so we are done.\end{proof}

\section{Modalities}\label{Modalities}

\subsection{Algebras and frames}\label{AlgFrames}

To add modalities to our story, let us recall the following standard definitions.

\begin{definition}
A unary operation $f$ on a lattice $L$ is \textit{monotone} if $a\leq b$ implies $f(a)\leq f(b)$. We say that $f$ is \textit{multiplicative} (resp.~\textit{completely multiplicative}) if for any finite (resp.~arbitrary) subset $S$ of elements of $L$ (such that $\bigwedge S$ exists),
\[f( \bigwedge S)=\bigwedge \{f(a)\mid a\in S\}.\]
Dually, $f$  is \textit{additive} (resp.~\textit{completely additive}) if for any finite (resp.~arbitrary) subset $S$ of elements of $L$ (such that $\bigvee S$ exists), 
\[f(\bigvee S)=\bigvee \{f(a)\mid a\in S\}.\]
\end{definition}

As suggested in \S~\ref{Intro}, the first idea for extending the relational semantics of \S~\ref{RelSem} to handle necessity and possibility modals is to add two accessibility relations to $(X,\comp)$; see \cite{Conradie2019} for a similar approach but without negation in the signature. Other related approaches to representing lattices with modalities can be found in, e.g., \cite{Bezhanishvili2024,Conradie2016,Dmitrieva2021,Gehrke2006,Goldblatt2019,Hartonas2018,Hartonas2019,Holliday2015,Holliday2021b,Orlowska2005}.

\begin{definition}\label{ModalFrameDef} A \textit{modal frame} is a triple $(X,\comp,R,Q)$ such that $\comp$, $R$, and $Q$ are binary relations on $X$, and for all $x,y,z\in X$,
\[\mbox{if $x Ry\compflip z$, then $\exists x'\comp x$ $\forall x''\compflip x' $ $\exists y''$: $x''Ry''\compflip z$.}\]
\end{definition}

\begin{figure}[h]
\begin{center}
\begin{tikzpicture}[->,>=stealth',shorten >=1pt,shorten <=1pt, auto,node
distance=2cm,thick,every loop/.style={<-,shorten <=1pt}]
\tikzstyle{every state}=[fill=gray!20,draw=none,text=black]

\node (x) at (0,0) {{$x$}};
\node (y) at (4,0) {{$y$}};
\node (z) at (4,3) {{$z$}};
\node at (5.5,1.5) {{\textit{$\Rightarrow$}}};

\path (x) edge[dashed,->] node {{}} (y);
\path (y) edge[->] node {{}} (z);

\node (x2) at (7,0) {{$x$}};
\node (x'2) at (7,3) {{$x'$}};
\node (x''2) at (8,1) {{$x''$}};
\node (y''2) at (10,1) {{$y''$}};
\node (y2) at (11,0) {{$y$}};
\node (z2) at (11,3) {{$z$}};

\path (x2) edge[dashed,->] node {{}} (y2);
\path (x''2) edge[dashed,->] node[above] {{$\exists$}} (y''2);
\path (y2) edge[->] node {{}} (z2);
\path (x2) edge[->] node {{$\exists$}} (x'2);
\path (x''2) edge[->] node[right] {{$\forall$}} (x'2);
\path (y''2) edge[->] node {{}} (z2);

\end{tikzpicture}
\end{center}
\caption{Illustration of the modal frame condition in Definition \ref{ModalFrameDef}. A solid line from $w$ to $v$ indicates $w\compflip v$, and a dashed line from $w$ to $v$ indicates $wRv$.}
\end{figure}
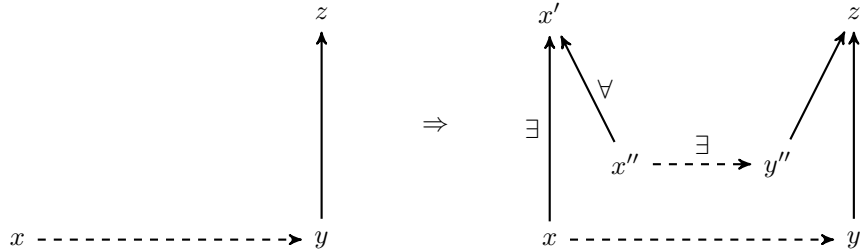

From $R$ we define a necessity modality $\Box_R$ as usual. However, from $Q$ we will define our possibility modality  $\Diamond_Q$ using a more intricate quantificational pattern, as shown in Fig.~\ref{DiamondDefFig} below. For any $S\subseteq X^2$, let $S(x)=\{y\in X\mid xSy\}$.

\begin{proposition}\label{BoxDiamond} Given a modal frame $\mathcal{F}=(X,\comp,R,Q)$, define operations $\Box_R$ and $\Diamond_Q$ on the lattice of $c_\comp$-fixpoints of $\mathcal{F}$ as follows:
\begin{eqnarray*}
\Box_RA & = & \{x\in X\mid R(x)\subseteq A\}; \\
\Diamond_QA & = & \{x\in X\mid \forall x'\comp x \;\exists y'\in Q(x')\,\exists y\compflip y'\colon y\in A \}.
\end{eqnarray*}
Then $\Box_R$ and $\Diamond_Q$ send $c_\comp$-fixpoints to $c_\comp$-fixpoints,  $\Box_R$ is completely multiplicative, and $\Diamond_Q$ is monotone.\end{proposition}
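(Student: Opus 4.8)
The plan is to verify the four claims in Proposition~\ref{BoxDiamond} in turn, treating $\Box_R$ by the familiar J\'onsson--Tarski-style bookkeeping and $\Diamond_Q$ by a more careful argument that exploits exactly the first-order condition built into Definition~\ref{ModalFrameDef}.

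First I would dispatch the two easy facts about $\Box_R$. To see that $\Box_RA$ is a $c_\comp$-fixpoint whenever $A$ is, one inclusion is immediate since $c_\comp$ is a closure operator (Proposition~\ref{RelSemFacts}(i)); for the other, take $x\in c_\comp(\Box_RA)$ and $w\in R(x)$, and I want $w\in A=c_\comp(A)$, so take $y\comp w$ and seek $z\compflip y$ with $z\in A$. Here is where the modal frame condition enters: from $xRw\compflip$\,\dots\ — more precisely, applying the condition to $xRw$ and any $y\comp w$ (so that $w\compflip$-precedes nothing is not what we need; rather we unwind the condition with the roles set so that $y$ plays the part of $z$) — we obtain $x'\comp x$ and, for the chosen quantified data, some $x''$ and $y''$ with $x''Ry''\compflip y$; combining $x\in c_\comp(\Box_R A)$ applied to $x'\comp x$ with the closure structure yields a point in $\Box_R A$ that is $\compflip$-below $x'$, whence $x''\in \Box_R A$, whence $y''\in A$, and $y''\compflip y$ gives the required $z$. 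Complete multiplicativity of $\Box_R$ is then routine: $R(x)\subseteq\bigcap_{i}A_i$ iff $R(x)\subseteq A_i$ for all $i$, and meets in $\lat(X,\comp)$ are intersections, so $\Box_R\bigl(\bigwedge_{i}A_i\bigr)=\bigwedge_i\Box_RA_i$ with no side condition needed.

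Next I would handle $\Diamond_Q$. Monotonicity is read directly off the defining formula: if $A\subseteq B$, then any witnessing $y\compflip y'$ with $y\in A$ also has $y\in B$, so $\Diamond_QA\subseteq\Diamond_QB$. The substantive point is that $\Diamond_QA$ is a $c_\comp$-fixpoint, i.e.\ $c_\comp(\Diamond_QA)\subseteq\Diamond_QA$. So suppose $x\in c_\comp(\Diamond_QA)$; to show $x\in\Diamond_QA$, fix $x'\comp x$ and seek $y'\in Q(x')$ and $y\compflip y'$ with $y\in A$. From $x\in c_\comp(\Diamond_QA)$ and $x'\comp x$ we get some $t\compflip x'$ with $t\in\Diamond_QA$. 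Now I would like to feed $x'$ (as a point with $x'\comp x'$ would be false in general) — instead, the natural move is to unfold $t\in\Diamond_QA$ against a suitable $\comp$-successor of $t$. Since $t\compflip x'$, i.e.\ $x'\comp t$, applying the definition of $\Diamond_QA$ at $t$ to the point $x'$ (which satisfies $x'\comp t$) yields $y'\in Q(x')$ and $y\compflip y'$ with $y\in A$ — exactly what was wanted. In fact the frame condition of Definition~\ref{ModalFrameDef} is not even needed for $\Diamond_Q$ to be well defined on fixpoints; it is only needed (as above) for $\Box_R$, and one should double-check that the definition's pattern $\forall x'\comp x\,\exists y'\in Q(x')\,\exists y\compflip y'$ composes correctly with $c_\comp$, which is precisely the computation just sketched.

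I expect the main obstacle to be the $\Box_R$-fixpoint argument: getting the quantifier alternation in Definition~\ref{ModalFrameDef} to line up with the $\forall y\comp w\,\exists z\compflip y$ pattern defining $c_\comp$, and in particular making sure that the point obtained from $x\in c_\comp(\Box_RA)$ at $x'$ can be pushed through $R$ to land $\compflip$-above the given $y$. This is the step where the frame condition is used in exactly the strength in which it is stated, so the proof should make that correspondence explicit (the accompanying figure is essentially a picture of this step). Everything else — monotonicity of $\Diamond_Q$, complete multiplicativity of $\Box_R$, and the $\Diamond_Q$-fixpoint check — is bookkeeping with the definitions and Proposition~\ref{RelSemFacts}.
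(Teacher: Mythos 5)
Your proposal is correct and takes essentially the same approach as the paper: the paper argues the fixpoint claims contrapositively (from $x\not\in\Box_RA$ it produces a witness $x'\comp x$ via the modal frame condition), whereas you argue them directly (from $x\in c_\comp(\Box_RA)$ you push the point $x''\compflip x'$ supplied by the closure condition through $R$ to land $\compflip$-above the given $y$), but the instantiation of the frame condition and the observation that $\Diamond_Q$ needs no frame condition at all are identical. The only weakness is expository: the parenthetical detour in your $\Box_R$-fixpoint step is garbled and should simply say that the condition is applied to $xRw\compflip y$ with $y$ in the role of $z$.
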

\begin{proof} First, we  show that $\Box_R A$ is a $c_\comp$-fixpoint for any $c_\comp$-fixpoint $A$. Equivalently, we show that
if $ x\in X\setminus \Box_R A$, then  $\exists x'\comp x\,\forall x''\compflip x' \; x''\not\in \Box_R A$.  Suppose $x\not\in \Box_R A$, so there is some $y$ such that  $xRy\not\in A$. Then since $A$ is a {$c_\comp$-fixpoint}, there is a ${z\comp y}$ such that ($\star$) for all $z'\compflip z$, we have $z'\not\in A$. Since $xRy\compflip z$, by the modal frame condition we have $\exists x'\comp x\,\forall x''\compflip x' \,\exists y'':\, x'' R y'' \compflip z$. Now $z\comp y''$ implies $y''\not\in A$ by ($\star$), which with $x''Ry''$ implies $x''\not\in\Box_R A$. 

Next, we show that $\Diamond_QA$ is a $c_\comp$-fixpoint for any $c_\comp$-fixpoint $A$. Suppose $x\not\in \Diamond_QA$, so  $\exists x'\comp x \;\forall y'\in Q(x')\;\forall y\compflip y':y\not\in A$. Then clearly there is no $x''\compflip x'$ with $x''\in \Diamond_Q A$. Hence $\exists x'\comp x$ $\forall x''\compflip x'$ $x''\not\in\Diamond_Q A$, as desired.

That $\Box_R$ is completely additive and $\Diamond_Q$ monotone is obvious from the definitions.\end{proof}

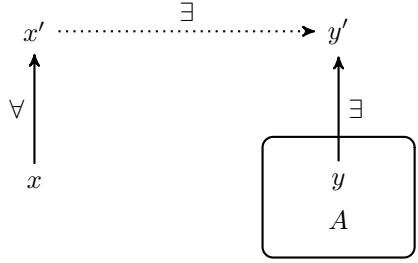
\begin{figure}[h]
\begin{center}
\begin{tikzpicture}[->,>=stealth',shorten >=1pt,shorten <=1pt, auto,node
distance=2cm,thick,every loop/.style={<-,shorten <=1pt}]
\tikzstyle{every state}=[fill=gray!20,draw=none,text=black]

\node (x) at (0,0) {{$x$}};
\node (x') at (0,2) {{$x'$}};
\node (y') at (4,2) {{$y'$}};
\node (y) at (4,0) {{$y$}};

\path (x) edge[->] node {{$\forall$}} (x');
\path (x') edge[dotted,->] node {{$\exists$}} (y');
\path (y) edge[->] node[right] {{$\exists$}} (y');

\path[-, draw=black, opacity=0.5, thick, rounded corners] (5, .5) -- (5, -1) --  (3.95, -1) -- (3, -1) -- (3, .6)  -- (5, .6) -- (5, .5) ;

\node (A) at (4,-.5) {{$A$}}; 

\end{tikzpicture}
\end{center}
\caption{Illustration of the condition for $x\in \Diamond_QA$ from Proposition \ref{BoxDiamond}. The dotted line from $x'$ to $y'$ indicates $x'Qy'$.}\label{DiamondDefFig}
\end{figure}

The definition of $\Diamond_Q$ can be understood intuitively as follows, using notions from \S~\ref{RelSem}. First, let $\Box_Q$ be the necessity modality defined from $Q$ in the usual way, so $\Box_Q A=\{x\in X\mid Q(x)\subseteq A\}$. (So when we take $Q=R$ in \S~\ref{UnificationSection}, this is just $\Box_R$.)  Then $x\in \Diamond_Q A$ in effect means that according to $x$, \[\mbox{\textit{it's not the case that $A$ is necessarily \textnormal{(}relative to $Q$\textnormal{)} rejected}}.\]
For given our interpretation of `not', the displayed condition means there is some $x'\comp x$ that does not accept  that \textit{$A$ is necessarily \textnormal{(}relative to $Q$\textnormal{)} rejected}, which in turn means there is some $y'$ that is $Q$-accessible from $x'$ and does not reject $A$, which in turns means there is a some $y\compflip y'$ with $y\in A$.

\begin{remark}Without further conditions, $\Diamond_Q$ is not guaranteed to be additive. But this is a feature, rather than a bug, of the above approach to possibility, since there are contexts in which additivity is not desired for $\Diamond$. For example, Wijesekera \cite{Wijesekera1990} intentionally designed his system of intuitionistic modal logic so that $\Diamond$ does not distributive over $\vee$, since this is not wanted for some applications of intuitionistic modal logic in computer science. For another example, we recall Kenny's \cite{Kenny1976} argument that the \textit{ability} modality does not distribute over $\vee$: you may be able to ensure that your dart hits the top half of the dart board or your dart hits the bottom half of the board; it does not follow that you are able to ensure that your dart hits the top half or that you are able to ensure that your dart hits the bottom half, since that may be beyond your~skill.\end{remark}

When we want $\Diamond_Q$ to be completely additive, as we now do, we simply impose a condition analogous to that of Definition \ref{ModalFrameDef} for $Q$ but with $\comp$ flipped.

\begin{definition}\label{AddDef} A modal frame $(X,\comp,R,Q)$ is \textit{additive} if for all $x,y,z\in X$,
\[\mbox{if $x Qy\comp z$, then $\exists x'\compflip x$ $\forall x''\comp x'$ $\exists y''$: $x''Qy''\comp z$}.\] 
\end{definition}

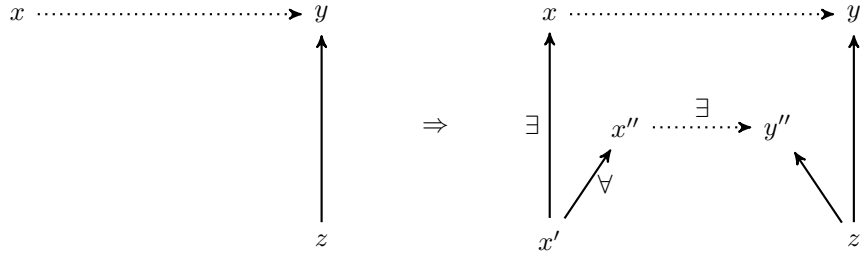
\begin{figure}[h]
\begin{center}
\begin{tikzpicture}[->,>=stealth',shorten >=1pt,shorten <=1pt, auto,node
distance=2cm,thick,every loop/.style={<-,shorten <=1pt}]
\tikzstyle{every state}=[fill=gray!20,draw=none,text=black]

\node (x) at (0,3) {{$x$}};
\node (y) at (4,3) {{$y$}};
\node (z) at (4,0) {{$z$}};
\node at (5.5,1.5) {{\textit{$\Rightarrow$}}};

\path (x) edge[dotted,->] node {{}} (y);
\path (z) edge[->] node {{}} (y);

\node (y2) at (7,3) {{$x$}};
\node (x'2) at (7,0) {{$x'$}};
\node (y'2) at (8,1.5) {{$x''$}};
\node (z'2) at (10,1.5) {{$y''$}};

\node (z2) at (11,3) {{$y$}};
\node (w2) at (11,0) {{$z$}};

\path (x'2) edge[->] node[left] {{$\exists$}} (y2);
\path (x'2) edge[->] node[right] {{$\forall$}} (y'2);
\path (y2) edge[dotted,->] node {{}} (z2);
\path (w2) edge[->] node {{}} (z2);
\path (w2) edge[->] node {{}} (z'2);
\path (y'2) edge[dotted,->] node[above] {{$\exists$}} (z'2);

\end{tikzpicture}
\end{center}
\caption{Illustration of the additivity condition in Definition \ref{AddDef}.}
\end{figure}

\begin{proposition}\label{AddLemm} If $(X,\comp,R,Q)$  is an additive modal frame, then the operation $\Diamond_Q$ is completely additive.
\end{proposition}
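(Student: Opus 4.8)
The goal is to show, for every family $\{A_i\}_{i\in I}$ of $c_\comp$-fixpoints, that $\Diamond_Q\big(\bigvee_{i\in I}A_i\big)=\bigvee_{i\in I}\Diamond_Q A_i$. I would prove the two inclusions separately. The inclusion $\bigvee_i\Diamond_Q A_i\subseteq\Diamond_Q\big(\bigvee_i A_i\big)$ is immediate from monotonicity of $\Diamond_Q$ (Proposition \ref{BoxDiamond}): each $c_\comp$-fixpoint $\Diamond_Q A_i$ is contained in the $c_\comp$-fixpoint $\Diamond_Q\big(\bigvee_j A_j\big)$, so $\bigcup_i\Diamond_Q A_i\subseteq\Diamond_Q\big(\bigvee_j A_j\big)$, and applying the closure operator $c_\comp$ gives $\bigvee_i\Diamond_Q A_i=c_\comp\big(\bigcup_i\Diamond_Q A_i\big)\subseteq\Diamond_Q\big(\bigvee_j A_j\big)$. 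All the content is in the reverse inclusion, and there the additivity condition of Definition \ref{AddDef} is exactly what is needed, playing for $\Diamond_Q$ the role that the modal frame condition of Definition \ref{ModalFrameDef} played for $\Box_R$ in Proposition \ref{BoxDiamond}.

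For the reverse inclusion I would first dispense with the closure. Write $\bigvee_i A_i=c_\comp(\bigcup_i A_i)$, and observe that $\Diamond_Q(c_\comp(B))\subseteq\Diamond_Q(B)$ for every $B\subseteq X$ (the reverse holding by monotonicity, since $B\subseteq c_\comp(B)$). This sub-step needs no frame conditions: if $b\comp c$ and $c\in c_\comp(B)$, then taking $b$ as the witness in the clause ``$\forall w\comp c$'' of the definition of $c_\comp(B)$ yields some $z$ with $b\comp z$ and $z\in B$, so any witness $c\in c_\comp(B)$ appearing in the definition of $\Diamond_Q(c_\comp B)$ may be replaced by such a $z\in B$. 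Hence it suffices to prove $\Diamond_Q\big(\bigcup_i A_i\big)\subseteq c_\comp\big(\bigcup_i\Diamond_Q A_i\big)=\bigvee_i\Diamond_Q A_i$.

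For this final inclusion the argument I have in mind runs as follows. Fix $x\in\Diamond_Q(\bigcup_i A_i)$; I must show that for every $u$ with $u\comp x$ there are $v$ with $u\comp v$ and an index $i$ with $v\in\Diamond_Q A_i$. Fix such a $u$. Instantiating $x\in\Diamond_Q(\bigcup_i A_i)$ at $u$ yields $b\in Q(u)$ and $c$ with $b\comp c$ and $c\in A_i$ for some $i$, and I \emph{fix this $i$ and this $c$ once and for all}. Since $uQb$ and $b\comp c$, the additivity condition of Definition \ref{AddDef} provides $u'$ with $u\comp u'$ such that for every $a\comp u'$ there is $b'$ with $aQb'$ and $b'\comp c$. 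I claim $v:=u'$ works for this $i$: given any $a\comp u'$, the additivity conclusion supplies $b'\in Q(a)$ with $b'\comp c$, and since $c$ is already fixed with $c\in A_i$, the pair $(b',c)$ is the required witness; as $a\comp u'$ was arbitrary, $u'\in\Diamond_Q A_i$ by the definition of $\Diamond_Q$. Since $u\comp u'$ and $u'\in\Diamond_Q A_i\subseteq\bigcup_i\Diamond_Q A_i$, and $u$ was arbitrary, we obtain $x\in c_\comp(\bigcup_i\Diamond_Q A_i)$, completing the reverse inclusion.

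The one delicate point — and the place where the additivity condition is indispensable — is that the index $i$ must be chosen \emph{before} quantifying over $a\comp u'$. A naive attempt reading off a separate index from each $a$ would fail, precisely because $\Diamond_Q$ is not additive in general (cf.\ the Remark after Proposition \ref{BoxDiamond}). What legitimizes the uniform choice is that the additivity frame condition delivers, for \emph{all} $a\comp u'$ simultaneously, $Q$-successors $b'$ with $b'\comp c$ for the single point $c$ already pinned down, so one fixed witness $c\in A_i$ suffices everywhere; this is the main obstacle, and it is exactly the obstacle the hypothesis is designed to remove. (The degenerate case $I=\varnothing$ is handled too: by the sub-step above, $\Diamond_Q(0)=\Diamond_Q(c_\comp(\varnothing))=\Diamond_Q(\varnothing)$, and one checks directly from the definitions that $\Diamond_Q(\varnothing)=c_\comp(\varnothing)=0$.)
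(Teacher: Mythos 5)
Your proof is correct and follows essentially the same route as the paper's: for the nontrivial inclusion, both arguments fix, for each $u\comp x$, a single witness $c\in A_i$ obtained by unwinding the join, and then apply the additivity condition of Definition \ref{AddDef} to produce $u'$ with $u\comp u'$ and $u'\in\Diamond_Q A_i$. The paper simply leaves the easy inclusion (from monotonicity) and the reduction $\Diamond_Q(c_\comp(B))\subseteq\Diamond_Q(B)$ implicit rather than isolating them as you do.
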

\begin{proof} Suppose $w\in \Diamond \bigvee \{A_i\mid i\in I\}$. Toward showing $w\in \bigvee \{\Diamond A_i\mid i\in I\}$, consider some $x\comp w$. Then since $w\in  \Diamond \bigvee \{A_i\mid i\in I\}$, there are $y,u$ such that $xQy\comp u\in \bigvee \{A_i\mid i\in I\}$. Since $u\in \bigvee \{A_i\mid i\in I\}$ and $y\comp u$, it  follows that there is some $z\compflip y$ such that $z\in A_i$ for some $i\in I$. Then picking $x'$ as in Definition \ref{AddDef}, we have $x'\in\Diamond A_i$. Thus, for every $x\comp w$, there is an $x'\compflip x$ and $i\in I$ such that $x'\in \Diamond A_i$, which shows that $w\in  \bigvee \{\Diamond A_i\mid i\in I\}$.\end{proof}

In summary, for $\Box_R$ we impose an interaction condition on $R$ and $\comp$ to ensure that $\Box_R$ send $c_\comp$-fixpoints to $c_\comp$-fixpoints, but no interaction condition is required for $\Box_R$ to be completely multiplicative. By contrast, for $\Diamond_Q$ no interaction condition on $Q$ and $\comp$ is required for $\Diamond_Q$ to send $c_\comp$-fixpoints to $c_\comp$-fixpoints, but we impose an interaction condition when we want $\Diamond_Q$ to be completely additive.

\subsection{Representation}

The representational power of the relational frames from the previous subsection is shown by the following result.

\begin{theorem}\label{CompModalRep} Let $L$ be a complete lattice equipped with $\neg$, $\Box$, and $\Diamond$ where
\begin{itemize}
\item $\neg$ is an antitone unary operation on $L$ with $\neg 1=0$,
\item $\Box$ is a completely multiplicative unary operation on $L$,  and 
\item $\Diamond$ is a completely additive unary operation on $L$. 
\end{itemize}
Then define:
\begin{itemize}
\item $X = \{(a,b)\mid a,b\in L,\neg a\leq b\}$, and for $x=(a,b)\in X$, $x_0=a$, and $x_1=b$;
\item $x\comp y$ iff $x_1\not\geq y_0$;
\item $xRy$ iff  for all $a\in L$, $x_0\leq \Box a\Rightarrow y_0\leq a$;
\item $xQy$ iff for all $a\in L$, $\Diamond a\leq x_1\Rightarrow  a\leq y_1$.
\end{itemize}
 Then $(X,\comp, R,Q)$ is an additive modal frame, and $(L,\neg, \Box,\Diamond)$ is isomorphic to ${(\lat(X,\comp),\neg_\comp, \Box_R,\Diamond_Q)}$.
\end{theorem}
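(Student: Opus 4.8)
The plan is to leverage the representation theorem for lattices with negation recalled above and then treat $\Box$ and $\Diamond$ separately. For the frame $(X,\comp)$ defined here, that theorem supplies an isomorphism $h\colon(L,\neg)\to(\lat(X,\comp),\neg_\comp)$; one verifies that $h$ may be taken to be the map $a\mapsto\{x\in X\mid x_0\leq a\}$, using that $(1,\neg a)\in X$ to see $\neg_\comp h(a)=\{x\mid x_0\leq\neg a\}=h(\neg a)$, and that every $c_\comp$-fixpoint has this form. It then remains to show: (i) $(X,\comp,R,Q)$ satisfies the conditions of Definitions \ref{ModalFrameDef} and \ref{AddDef}; (ii) $h(\Box a)=\Box_R h(a)$ for all $a\in L$; and (iii) $h(\Diamond a)=\Diamond_Q h(a)$ for all $a\in L$. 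Since $h$ is already an isomorphism on the $\neg$-reduct, (i)--(iii) upgrade it to an isomorphism $(L,\neg,\Box,\Diamond)\cong(\lat(X,\comp),\neg_\comp,\Box_R,\Diamond_Q)$. Throughout, every auxiliary pair introduced below lies in $X$ for the trivial reason that one of its coordinates is $0$ or $1$.

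For (ii): $xRy$ forces $y_0\leq c$, where $c:=\bigwedge\{b\in L\mid x_0\leq\Box b\}$, while conversely complete multiplicativity gives $\Box c=\bigwedge\{\Box b\mid x_0\leq\Box b\}\geq x_0$, so $(c,1)$ is an $R$-successor of $x$ realizing that bound. Hence $x\in\Box_R h(a)$ iff every $R$-successor $y$ of $x$ has $y_0\leq a$, which---via $(c,1)$ in one direction and monotonicity of $\Box$ in the other---is equivalent to $x_0\leq\Box a$, i.e.\ $x\in h(\Box a)$. For (iii), the inclusion $\Diamond_Q h(a)\subseteq h(\Diamond a)$ needs no extra hypothesis: if $x_0\not\leq\Diamond a$ then $(1,\Diamond a)\comp x$, yet every $y'\in Q((1,\Diamond a))$ has $a\leq y'_1$ (take $b=a$ in the definition of $Q$), so no $y\compflip y'$ can lie in $h(a)$ and thus $x\notin\Diamond_Q h(a)$. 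For the converse, assume $x_0\leq\Diamond a$ and $x'\comp x$, and put $d:=\bigvee\{b\in L\mid\Diamond b\leq x'_1\}$; complete additivity gives $\Diamond d=\bigvee\{\Diamond b\mid\Diamond b\leq x'_1\}\leq x'_1$, while $x'\comp x$ together with $x_0\leq\Diamond a$ forces $\Diamond a\not\leq x'_1$ and hence $a\not\leq d$. Then $y':=(1,d)\in Q(x')$ and $y:=(a,1)$ satisfy $y\compflip y'$ (since $a\not\leq d$) and $y\in h(a)$, witnessing $x\in\Diamond_Q h(a)$.

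For (i), the required witnesses are again built from $\Box$ and $\Diamond$. Given $xRy\compflip z$, so $z_1\not\geq y_0$, take $x'=(1,\Box z_1)$: if $x_0\leq\Box z_1$, then the definition of $R$ applied with $b=z_1$ would give $y_0\leq z_1$, contradicting $z_1\not\geq y_0$; hence $x'\comp x$. For any $x''\compflip x'$, i.e.\ $\Box z_1\not\geq x''_0$, the pair $y''=(\bigwedge\{b\mid x''_0\leq\Box b\},1)$ is an $R$-successor of $x''$, and $z_1\geq y''_0$ would give $\Box z_1\geq\Box y''_0\geq x''_0$ by complete multiplicativity, a contradiction; so $y''\compflip z$, which establishes the condition of Definition \ref{ModalFrameDef}. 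The condition of Definition \ref{AddDef} follows by the order-dual argument: given $xQy\comp z$, take $x'=(\Diamond z_0,1)$ and, for each $x''\comp x'$, the pair $y''=(1,\bigvee\{b\mid\Diamond b\leq x''_1\})$, using complete additivity of $\Diamond$ in place of complete multiplicativity.

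I expect the crux to be (i): one must recognize that $a\mapsto\bigwedge\{b\mid a\leq\Box b\}$ and $b\mapsto\bigvee\{c\mid\Diamond c\leq b\}$ behave like residuals of $\Box$ and $\Diamond$---so that the pairs built from $\Box z_1$ and $\Diamond z_0$ are the right choices of $x'$---and that complete multiplicativity and complete additivity are precisely what make these residuals well behaved. Once those witnesses and the two canonical-successor constructions are in hand, the verifications in (i), (ii), and (iii) reduce to short order-theoretic manipulations.
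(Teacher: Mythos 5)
Your proof follows essentially the same route as the paper's: the same embedding $a\mapsto\{x\in X\mid x_0\leq a\}$, the same canonical successors built from $\bigwedge\{b\mid x_0\leq\Box b\}$ and $\bigvee\{b\mid\Diamond b\leq x_1\}$, and the same witnesses $(1,\Box z_1)$ and $(\Diamond z_0,\cdot)$ for the modal-frame and additivity conditions, with only immaterial changes to the irrelevant coordinates of the auxiliary pairs (and the paper additionally spells out the separation check that you defer to the recalled representation theorem). One tiny slip: for the inclusion $h(\neg a)\supseteq\neg_\comp h(a)$ the witness should be $(a,\neg a)$ rather than $(1,\neg a)$, but this is harmless.
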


To prove Theorem \ref{CompModalRep}, we make use of the following definition and proposition from  \cite{Holliday2022,Holliday2023}.

\begin{definition}\label{Good} Let $L$ be a lattice and $P$ a set of pairs of elements of $L$. Define a binary relation $\comp$ on $P$ by 
$(a,b)\comp (c,d)$ if $c\not\leq b$. Then we say $P$ is \textit{separating} if for all $a,b\in L$:
\begin{enumerate}
\item\label{Good2} if $a\not\leq b$, then there is a $(c,d)\in P$ with $c\leq a$ and $c\not\leq b$;
\item\label{Good3} for all $(c,d)\in P$, if $c\not\leq b$, then there is a $(c',d')\comp (c,d)$ such that for all $(c'',d'')\compflip (c',d')$, we have $c''\not\leq b$.
\end{enumerate}
\end{definition}

\begin{proposition}\label{CompRep} Let $L$ be a lattice and $P$ a separating set of pairs of elements of $L$. For $a\in L$, define $ f(a)= \{(x,y) \in P\mid x\leq a\}$. Then:
\begin{enumerate}
\item\label{CompRep1} $f$ is a complete embedding of $L$ into $\lat(P,\comp)$;
\item\label{CompRep2} if $L$ is complete, then $f$ is an isomorphism from $L$ to $\lat(P,\comp)$.
\end{enumerate}
\end{proposition}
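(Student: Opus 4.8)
The plan is to exhibit the isomorphism explicitly as the map $f$ of Proposition \ref{CompRep}, namely $f(a)=\{x\in X\mid x_0\leq a\}$, and to check that it commutes with all four operations. The underlying lattice and the negation are already taken care of: the set $X=\{(a,b)\mid \neg a\leq b\}$ with $x\comp y$ iff $x_1\not\geq y_0$ is exactly the frame constructed in the proof of the negation representation of $(L,\neg)$ (Theorems 4.24 and 4.30), so $X$ is separating---one checks Definition \ref{Good}.\ref{Good2} using the pair $(a,\neg a)$, and Definition \ref{Good}.\ref{Good3} using the pair $(1,b)$, for which the set of points above is precisely the set of points $(c'',d'')$ with $c''\not\leq b$---whence Proposition \ref{CompRep}.\ref{CompRep2} makes $f$ a lattice isomorphism $L\cong\lat(X,\comp)$, while that same representation gives $f(\neg a)=\neg_\comp f(a)$. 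It thus remains to verify \textnormal{(i)} that $(X,\comp,R,Q)$ is an additive modal frame, and \textnormal{(ii)} that $f(\Box a)=\Box_R f(a)$ and $f(\Diamond a)=\Diamond_Q f(a)$; combining these with the lattice and negation parts yields the full isomorphism.

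The key technical device is adjunction. Since $L$ is complete and $\Box$ is completely multiplicative, $\Box$ has a lower adjoint $\blacklozenge$ with $\blacklozenge c\leq a\iff c\leq\Box a$; dually the completely additive $\Diamond$ has an upper adjoint $\blacksquare$ with $\Diamond a\leq b\iff a\leq\blacksquare b$. These rewrite the accessibility relations as $xRy\iff y_0\leq\blacklozenge x_0$ and $xQy\iff\blacksquare x_1\leq y_1$, and---crucially---every element of $L$ occurs both as a first coordinate (via $(c,\neg c)$) and as a second coordinate (via $(1,b)$) of some point of $X$, so quantifications over coordinates become quantifications over all of $L$.

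For $\Box$, the inclusion $x_0\leq\Box a\Rightarrow R(x)\subseteq f(a)$ is immediate from the definition of $R$, and conversely, if $x_0\not\leq\Box a$ then the point $(\blacklozenge x_0,\neg\blacklozenge x_0)$ witnesses $R(x)\not\subseteq f(a)$, using $x_0\leq\Box\blacklozenge x_0$ (which follows from complete multiplicativity) and monotonicity of $\Box$; hence $f(\Box a)=\Box_R f(a)$. For the condition of Definition \ref{ModalFrameDef}, suppose $xRy\compflip z$, so $y_0\not\leq z_1$, which since $xRy$ forces $x_0\not\leq\Box z_1$. Then $x'=(1,\Box z_1)$ satisfies $x'\comp x$, and for any $x''\compflip x'$ we have $x''_0\not\leq\Box z_1$, i.e.\ $\blacklozenge x''_0\not\leq z_1$, so $y''=(\blacklozenge x''_0,\neg\blacklozenge x''_0)$ gives $x''Ry''\compflip z$, as required.

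I expect the main obstacle to be $\Diamond$, because of the nested quantifiers in both $\Diamond_Q$ and $Q$. Unwinding $x\in\Diamond_Q f(a)$ and choosing the optimal witnesses $y'=(1,\blacksquare x'_1)$ and $y=(a,\neg a)$, the condition reduces to ``for all $x'$ with $x_0\not\leq x'_1$ one has $a\not\leq\blacksquare x'_1$''; taking the contrapositive of the inner implication and using $\Diamond a\leq b\iff a\leq\blacksquare b$ together with the fact that second coordinates exhaust $L$, this becomes ``$\forall b\,(\Diamond a\leq b\Rightarrow x_0\leq b)$'', i.e.\ $x_0\leq\Diamond a$, so $f(\Diamond a)=\Diamond_Q f(a)$. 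The additivity condition of Definition \ref{AddDef} is then the mirror image: from $xQy\comp z$ we get $z_0\not\leq y_1$, hence $\Diamond z_0\not\leq x_1$, so $x'=(\Diamond z_0,\neg\Diamond z_0)$ satisfies $x'\compflip x$, and for any $x''\comp x'$ we have $\Diamond z_0\not\leq x''_1$, i.e.\ $z_0\not\leq\blacksquare x''_1$, so $y''=(1,\blacksquare x''_1)$ gives $x''Qy''\comp z$. With the frame shown additive and all four operations matched by $f$, we conclude that $f$ is the desired isomorphism $(L,\neg,\Box,\Diamond)\cong(\lat(X,\comp),\neg_\comp,\Box_R,\Diamond_Q)$.
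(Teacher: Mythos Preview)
Your proposal does not address the stated proposition at all. Proposition~\ref{CompRep} is a purely lattice-theoretic claim: given \emph{any} separating set $P$ of pairs from a lattice $L$, the map $f(a)=\{(x,y)\in P\mid x\leq a\}$ is a complete embedding of $L$ into $\lat(P,\comp)$, and an isomorphism when $L$ is complete. There is no $\neg$, no $\Box$, no $\Diamond$, no $R$, no $Q$ in the statement; the proof requires only verifying that each $f(a)$ is a $c_\comp$-fixpoint, that $f$ is order-reflecting (using separation condition~(\ref{Good2})), that it preserves existing meets and joins (using condition~(\ref{Good3})), and, for surjectivity in the complete case, that every $c_\comp$-fixpoint equals $f(\bigvee\{x_0\mid (x_0,x_1)\in A\})$. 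The paper itself does not prove this proposition in-line but cites \cite[Proposition~4.23]{Holliday2023}.

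What you have actually written is a proof of Theorem~\ref{CompModalRep}, the modal representation theorem that \emph{uses} Proposition~\ref{CompRep} as a black box. Read as such, your argument is correct and is essentially the paper's own proof in adjoint notation: your $\blacklozenge x_0=\bigwedge\{a\mid x_0\leq\Box a\}$ and $\blacksquare x_1=\bigvee\{a\mid \Diamond a\leq x_1\}$ are precisely the first coordinate of the paper's $\rho(x)$ and the second coordinate of its $\sigma(x)$, and your witness points $(\blacklozenge x''_0,\neg\blacklozenge x''_0)$ and $(1,\blacksquare x''_1)$ coincide with $\rho(x'')$ and $\sigma(x'')$. The adjoint phrasing is tidier, but the underlying construction and the chosen witnesses $x'=(1,\Box z_1)$, $x'=(\Diamond z_0,\neg\Diamond z_0)$, $x'=(1,\Diamond a)$ are identical to the paper's.
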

\noindent In (i), $\mathfrak{L}(P,\comp)$ is the MacNeille completion of $L$ (see  \cite[Thm.~2.2]{Gehrke2005}). For a proof of Proposition \ref{CompRep}, see \cite[Proposition 4.23]{Holliday2023}.  We now prove Theorem~\ref{CompModalRep}.

\begin{proof} We first prove a preliminary lemma that we will use repeatedly. For any $x\in X$, let
\begin{eqnarray*}
\rho(x)&=& (\bigwedge\{b\mid x_0\leq\Box b\},   \neg \bigwedge\{b\mid x_0\leq\Box b\})\\
\sigma(x) &=&(1, \bigvee\{b\mid \Diamond b\leq x_1\}),
\end{eqnarray*}
so $\rho(x),\sigma(x)\in X$. Then obviously (a) $xR\rho(x)$ and  (b) $xQ\sigma(x)$. In addition:
\begin{itemize}
\item[(c)] if $x_0\not\leq \Box a$, then $\rho(x)_0\not\leq a$. Contrapositively,
\begin{eqnarray*}
&&\bigwedge\{b\mid x_0\leq\Box b\}\leq a\\
&\Rightarrow& \Box\bigwedge\{b\mid x_0\leq\Box b\}\leq \Box a\quad\mbox{by monotonicity of }\Box\\
&\Rightarrow& \bigwedge\{\Box b\mid x_0\leq\Box b\}\leq \Box a \quad\mbox{by complete multiplicativity of }\Box \\
&\Rightarrow& x_0\leq \Box a.
\end{eqnarray*}
\item[(d)] if $\Diamond a\not\leq x_1$, then  $a\not\leq \sigma(x)_1$. Contrapositively,
\begin{eqnarray*}
&&a\leq  \bigvee\{b\mid \Diamond b\leq x_1\} \\
&\Rightarrow& \Diamond a\leq \Diamond \bigvee\{b\mid \Diamond b\leq x_1\}\quad\mbox{by the monotonicity of }\Diamond\\
&\Rightarrow& \Diamond a\leq  \bigvee\{\Diamond b\mid \Diamond b\leq x_1\}\quad\mbox{by the complete additivity of }\Diamond\\
&\Rightarrow& \Diamond a\leq   x_1.
\end{eqnarray*}
\end{itemize}

Now we show that $(X,\comp, R,Q)$ is a modal frame as in Definition \ref{ModalFrameDef}:
\[\mbox{if $x Ry\compflip z$, then $\exists x'\comp x$ $\forall x''\compflip x' $ $\exists y''$: $x''Ry''\compflip z$.}\]
Suppose $x Ry\compflip z$. Then $x_0\not\leq \Box z_1$, for otherwise $xRy$ implies $y_0\leq z_1$, contradicting $y\compflip z$.  Now let $x'=(1, \Box z_1)$, so $x'\comp x$. Consider any $x''\compflip x'$, so $x''_0\not\leq x'_1=\Box z_1$. Let $y''=\rho(x'')$, so $x''Ry''$ by (a). Then $x''_0\not\leq \Box z_1$ implies $y''_0\not\leq z_1$ by (c), so $y''\compflip z$, which establishes the modal frame condition.

Next we show that  $(X,\comp, R,Q)$  is additive as in Definition \ref{AddDef}:
\[\mbox{if $x Qy\comp z$, then $\exists x'\compflip x$ $\forall x''\comp x'$ $\exists y''$: $x''Qy''\comp z$}.\] 
Suppose $x Qy\comp z$. Since $y\comp z$, we have $z_0\not\leq y_1$, which with $xQy$ implies $\Diamond z_0\not\leq x_1$. Then where $x'=(\Diamond z_0,\neg \Diamond z_0)$, we have $x'\compflip x$. Now consider any $x''\comp x'$, so $\Diamond z_0\not\leq x_1''$.  Let $y''=\sigma(x'')$, so $x''Qy''$ by (b). Then  $\Diamond z_0\not\leq x_1''$ implies $z_0\not\leq y''_1$ by (d), so $y''\comp z$, which shows that the frame is additive.

Now we prove that $(L,\neg, \Box,\Diamond)$ is isomorphic to ${(\lat(X,\comp),\neg_\comp, \Box_R,\Diamond_Q)}$. First, we claim that $X$ is separating as in Definition \ref{Good}.  For part (\ref{Good2}) of Definition \ref{Good}, take $(c,d)=(a,\neg a)$. For (\ref{Good3}), suppose $(c,d)\in X$ and $c\not\leq b$. Let $(c',d')=(1,b)$. Since $b\neq 1$ and $\neg 1=0\leq b$, $(1,b)\in X$, and since $c\not\leq b$, $(c',d')\comp (c,d)$. Now consider any $(c'',d'')\in X$ with $(c',d')\comp (c'',d'')$. Then $c''\not\leq d'=b$, so (\ref{Good3}) holds. Thus, by Proposition \ref{CompRep}, the $f$ defined there is an isomorphism from $L$ to $\mathfrak{L}(P,\comp)$. Next, we show that $f$ preserves $\neg$, $\Box$, and $\Diamond$.

To show $ f(\neg a)=\neg_\comp f(a)$, first suppose $(x,y)\in  f(\neg a)$, so $x\leq\neg a$, and $(x',y')\comp (x,y)$. If $x'\leq a$, then $\neg a\leq\neg x'$, which with $x\leq\neg a$ implies $x\leq \neg x'$, which with $\neg x'\leq y'$ implies $x\leq y'$, contradicting $(x',y')\comp (x,y)$. Thus, we have $x'\not\leq a$, so $(x',y')\not\in  f(a)$. Hence $(x,y)\in \neg_\comp  f(a)$. Conversely, let $(x,y)\in X\setminus f(\neg a)$, so $x\not\leq \neg a$. Then $(a,\neg a)\comp (x,y)$, so $(x,y)\not\in \neg_\comp f(a)$.

To show $f(\Box b)=\Box_Rf(b)$, first suppose $x\in f(\Box a)$, so $x_0\leq \Box a$. Then $xRy$ implies $y_0\leq a$ and hence $y_0\in f(a)$. Thus, $x\in\Box_Rf(a)$. Conversely, suppose $x\not\in f(\Box a)$, so $x_0\not\leq \Box a$. Let $y=\rho(x)$, 
  so $xRy$ by (a). Then $x_0\not\leq \Box a$ implies $y_0\not\leq a$ by (c), so $y\not\in f(a)$ and hence $x\not\in \Box_R f(a)$. 

To show $f(\Diamond a)=\Diamond f(a)$, first suppose $x\in f(\Diamond a)$, so $x_0\leq \Diamond a$. Further suppose $x'\comp x$, so $x_0\not\leq x'_1$ and hence $\Diamond a\not\leq x'_1$. Let  $y'=\sigma(x')$, so $x'Qy'$ by (b). The $\Diamond a\not\leq x'_1$ implies $a\not\leq y'_1$ by (d), so $y'\comp (a,\neg a)$. Since $(a,\neg a)\in f(a)$, this shows that $x\in \Diamond f(a)$.

Conversely, suppose $x\not\in f(\Diamond a)$, so $x_0\not\leq \Diamond a$. Let $x'=(1,\Diamond a)$, so $x'\comp x$. Now consider any $y'$ such that $x'Qy'$, which with $\Diamond a\leq x'_1$ implies $a\leq y'_1$. Then for any $y\compflip y'$, we have $y_0\not\leq y'_1$ and hence $y_0\not\leq a$, so $y\not\in f(a)$. Thus, $\exists x'\comp x$ $\forall y'\in Q(x')$ $\forall y\compflip y'$ $y\not\in f(a)$, which shows $x\not\in \Diamond f(a)$. \end{proof}

Dropping the completeness of $L$, we can prove the following result (note the proof can be carried out in ZF without the Axiom of Choice, in the spirit of \cite{Bezhanishvili-Holliday2020}), which embeds $L$ into its canonical extension (see \cite{Gehrke2001}). This is closely related to the topological representations of bounded lattices in \cite{Ploscica1995} and \cite{Craig2013}, building on \cite{Urquhart1978} and \cite{Allwein1993}, and the topological representation of Boolean algebras in \cite{Bezhanishvili-Holliday2020}. The treatment of negation was added in~\cite{Holliday2022}.

 \begin{theorem}\label{GenModalRep1} Let $L$ be a bounded lattice with $\neg$, $\Box$, and $\Diamond$ where
\begin{itemize}
\item $\neg$ is an antitone unary operation on $L$ with $\neg 1=0$,
\item $\Box$ is a multiplicative unary operation on $L$,  and 
\item $\Diamond$ is an additive unary operation on $L$. 
\end{itemize}
Then define: 
\begin{itemize}
\item $X=\{(F,I)\mid F \mbox{ is a filter in }L, I\mbox{ is an ideal in }L,\mbox{and } \{\neg a\mid a\in F\}\subseteq I\}$;
\item $(F,I)\comp (F',I')$ iff  $I\cap F'=\varnothing$;
\item $(F,I)R(F',I')$ iff for all $a\in L$, $\Box a\in F\Rightarrow a\in F'$;
\item $(F,I)Q(F',I')$ iff for all $a\in L$, $\Diamond a\in I\Rightarrow a\in I'$.
\end{itemize}
Then  $(X,\comp, R, Q)$ is an additive modal frame; $(L,\neg, \Box,\Diamond)$ embeds into ${(\lat(X,\comp),\neg_\comp, \Box_R,\Diamond_Q)}$; and $(L,\neg, \Box,\Diamond)$  is isomorphic to the subalgebra of ${(\lat(X,\comp),\neg_\comp, \Box_R,\Diamond_Q)}$ consisting of $c_\comp$-fixpoints that are compact open in the topology on $X$ generated by $\{\widehat{a}\mid a\in L\}$, where $\widehat{a}=\{(F,I)\mid a\in F\}$.
 \end{theorem}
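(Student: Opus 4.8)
The plan is to adapt the proof of Theorem~\ref{CompModalRep} to the filter–ideal setting, following the same skeleton but replacing the ``principal'' pairs $(a,\neg a)$ and $(1,b)$ with suitable filters and ideals. The first task is to check that $(X,\comp,R,Q)$ is an additive modal frame. Here I would define, for $x=(F,I)\in X$, the analogues of $\rho$ and $\sigma$: let $\rho(x)=(\uparrow\{a\mid \Box a\in F\}, \downarrow\{\neg a\mid \Box a\in F\})$ — one must check this is a genuine filter–ideal pair in $X$, using multiplicativity of $\Box$ to see the first component is closed under finite meets and antitonicity of $\neg$ for the containment $\{\neg a\mid a\in\rho(x)_0\}\subseteq\rho(x)_1$ — and similarly $\sigma(x)=(L,\downarrow\{a\mid \Diamond a\in I\})$ using additivity of $\Diamond$. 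Then the facts (a)~$xR\rho(x)$, (b)~$xQ\sigma(x)$, (c)~$\Box a\notin F\Rightarrow a\notin\rho(x)_0$, and (d)~$\Diamond a\notin I\Rightarrow a\notin\sigma(x)_1$ go through essentially verbatim. To verify the frame condition, given $xRy\compflip z$ one has $\Box b\notin F_x$ for some $b\in I_z$ (else $y$ and $z$ would clash), and one takes $x'=(L,\downarrow\{\Box b\mid b\in I_z\})$ — which requires multiplicativity of $\Box$ again so that this ideal is closed downward appropriately — then for $x''\compflip x'$ sets $y''=\rho(x'')$; additivity is handled dually with $\sigma$.

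Next I would establish that $X$ is separating in the sense of Definition~\ref{Good}, so that Proposition~\ref{CompRep}(i) gives a complete embedding $f$ of $L$ into $\lat(X,\comp)$ via $f(a)=\{(F,I)\in X\mid a\in F\}$ (the relevant order on pairs being $a\in F$, i.e.\ $(F,I)\comp(F',I')$ iff $I\cap F'=\varnothing$). For part~1, if $a\not\leq b$ take $(F,I)=(\uparrow a,\downarrow\{c\mid c\leq\neg a\}\cup\downarrow b)$ — or more carefully the filter $\uparrow a$ together with the smallest ideal containing $\{\neg c\mid a\leq c\}$ — so that $a\in F$ but $b\notin F$; for part~2, given $(F,I)$ with $a\notin F$, one wants $(F',I')\comp(F,I)$, i.e.\ $I'\cap F=\varnothing$, such that every $(F'',I'')$ with $I'\cap F''=\varnothing$ has $a\notin F''$ — take $I'=\downarrow a$ and $F'$ a filter disjoint from it (the improper filter $L$ will not do since $a\in L$; instead take $F'=\uparrow 1=\{1\}$, and if $a=1$ handle separately). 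This is the step most likely to need care: unlike the complete case, filters and ideals generated by the obvious sets must be shown to remain disjoint in the right ways, and the boundary cases $a=0$, $a=1$, $b=1$ must be checked. I expect this separating-set verification, together with correctly packaging the generators as honest filters/ideals, to be the main obstacle.

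With $f$ in hand, preservation of $\neg$, $\Box$, and $\Diamond$ follows the template of Theorem~\ref{CompModalRep}'s proof almost line for line: for $f(\neg a)=\neg_\comp f(a)$ use antitonicity of $\neg$ and the containment condition defining $X$; for $f(\Box a)=\Box_R f(a)$ use $\rho$ and fact~(c); for $f(\Diamond a)=\Diamond_Q f(a)$ use $\sigma$ and fact~(d), splitting into the two inclusions exactly as before, with $x'=(L,\downarrow\{b\mid b\leq\Diamond a\})$ playing the role of $(1,\Diamond a)$. Finally, for the topological part, I would observe that $\{\widehat a\mid a\in L\}$ is closed under finite intersections ($\widehat a\cap\widehat b=\widehat{a\wedge b}$) and so forms a basis; one checks each $\widehat a=f(a)$ is a $c_\comp$-fixpoint that is compact open, and conversely that every compact open $c_\comp$-fixpoint equals some $f(a)$ — this is the standard argument that the compact opens are exactly the ``principal'' clopens, so that the subalgebra of compact-open fixpoints is precisely the image of $f$, hence isomorphic to $(L,\neg,\Box,\Diamond)$. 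Throughout one only uses finitary meets/joins and the prime/maximal-filter machinery is avoided, so the argument stays within ZF, as claimed.
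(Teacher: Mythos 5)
Your overall strategy is the paper's: the same witnesses ($\{a\mid\Box a\in F\}$ as a filter via multiplicativity, $\{a\mid\Diamond a\in I\}$ as an ideal via additivity, the ideal generated by $\{\Box b\mid b\in I_z\}$ for the modal frame condition, and the dual constructions for additivity) drive both the frame verification and the preservation of $\Box$ and $\Diamond$. However, one family of steps fails as written: several of your witness pairs take the improper filter $L$ as first component, namely $\sigma(x)=(L,\mathord{\downarrow}\{a\mid\Diamond a\in I\})$ and $x'=(L,\mathord{\downarrow}\{\Box b\mid b\in I_z\})$. Membership in $X$ requires $\{\neg a\mid a\in F\}\subseteq I$, so with $F=L$ the second component would have to contain $\neg 0$, which by antitonicity is an upper bound of the image of $\neg$ and in general does not lie in the ideals you name (e.g.\ $\neg 0$ may be $1$). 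The paper uses $F'=\mathord{\uparrow}1=\{1\}$ in all these places, for which the condition reduces to $0\in I'$. Since you use $\mathord{\uparrow}1$ yourself in the separating argument, this is a repairable slip rather than a missing idea, but as written those pairs are not elements of $X$ and cannot serve as witnesses.

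The second issue is your appeal to Proposition~\ref{CompRep}: it is stated for separating sets of \emph{pairs of elements} of $L$, and its conclusion --- a \emph{complete} embedding, with $\lat(P,\comp)$ the MacNeille completion --- would be false for the filter--ideal frame, whose fixpoint lattice is the canonical extension; the map $a\mapsto\widehat{a}$ preserves only finite meets and joins. The paper accordingly does not invoke Proposition~\ref{CompRep} here but delegates the lattice-and-negation part to Theorem 4.30 of \cite{Holliday2023} (with a footnote on dropping the assumption $F\cap I=\varnothing$). Your two separating-style verifications are the right lemmas for order-reflection and for showing each $\widehat{b}$ is a $c_\comp$-fixpoint, but you would still need to check finite join preservation, i.e.\ $\widehat{a\vee b}\subseteq c_\comp(\widehat{a}\cup\widehat{b})$ (given $(F',I')\comp(F,I)$ with $a\vee b\in F$, one has $a\notin I'$ or $b\notin I'$ and takes $(\mathord{\uparrow}a,\mathord{\downarrow}\neg a)$ or $(\mathord{\uparrow}b,\mathord{\downarrow}\neg b)$ as the required $\compflip$-successor); your sketch never addresses this. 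The remainder --- the $\rho/\sigma$ facts, the preservation of $\Box$ and $\Diamond$, and the compact-open characterization of the image --- matches the paper's proof.
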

 
 \begin{proof} For the following, given an element $a$ of a lattice, let $\mathord{\uparrow}a$ (resp.~$\mathord{\downarrow}a$) be the principal filter (resp.~ideal) generated by $a$.

We first show that $(X,\comp, R, Q)$ is a modal frame as in Definition \ref{ModalFrameDef}:
\[\mbox{if $x Ry\compflip z$, then $\exists x'\comp x$ $\forall x''\compflip x' $ $\exists y''$: $x''Ry''\compflip z$.}\]
The proof slightly adapts that of Proposition 4.10 of \cite{Holliday2022} to account for the role of $\neg$ in the definition of $X$. Suppose $(F,I)R(G,H)\compflip (J,K)$, which implies $K\cap G=\varnothing$ and hence $K\cap \{a \mid \Box a\in F\}=\varnothing$. Then where $F'=\mathord{\uparrow}1$ and $I'$ is the ideal generated by $\{\Box a\mid a\in K\}$, we claim that $ I'\cap F=\varnothing$, so $(F',I')\comp (F,I)$.  For if $b\in I'\cap F$, then for some  $a_1,\dots,a_n\in K$, we have $b\leq \Box a_1\vee\dots\vee \Box a_n$, which implies $b\leq \Box(a_1\vee\dots\vee a_n)$, so ${\Box(a_1\vee\dots\vee a_n)\in F}$, whence $a_1\vee\dots\vee a_n\not\in K$, contradicting $a_1,\dots,a_n\in K$. Now suppose ${(F',I')\comp (F'',I'')}$, so $I'\cap F''=\varnothing$. Let $G''=\{b\mid \Box b\in F''\}$, which is a filter,  and let $H''$ be the ideal generated by $\{\neg a\mid a\in G''\}$. We claim $K\cap G''=\varnothing$, so $(J,K)\comp (G'',H'')$. For if $a\in G''$, then $\Box a\in F''$, so $\Box a\not\in I'$, whence $a\not\in K$. Thus, $(F'',I'')R(G'',H'')\compflip (J,K)$, which establishes the condition.

Now we show that  $(X,\comp, R,Q)$ is additive as in Definition \ref{AddDef}:
\[\mbox{if $x Qy\comp z$, then $\exists x'\compflip x$ $\forall x''\comp x'$ $\exists y''$: $x''Qy''\comp z$}.\] 
Suppose $(F,I)Q(G,H)\comp (J,K)$, which implies $H\cap J=\varnothing$ and hence ${\{a \mid \Diamond a\in I\}\cap J=\varnothing}$. Let $F'$ be the filter generated by $\{\Diamond a\mid a\in J\}$ and $I'$ the ideal generated by $\{\neg a\mid a\in F'\}$, so $(F',I')\in X$. We claim that $I\cap F'=\varnothing$, so $(F',I')\compflip (F,I)$. If $b\in  F'$, then there are $a_1,\dots,a_n\in J$ such that $\Diamond a_1\wedge\dots\wedge\Diamond a_n\leq b$, which implies $\Diamond (a_1\wedge\dots\wedge a_n)\leq b$.  If in addition $b\in I$, then $\Diamond (a_1\wedge\dots\wedge a_n)\in I$, so $a_1\wedge\dots\wedge a_n\not\in J$, contradicting $a_1,\dots,a_n\in J$. Now consider any $(F'',I'')\comp (F',I')$, so $I''\cap F'=\varnothing$. Let $G''=\mathord{\uparrow}1$ and $H''=\{a \mid \Diamond a\in I''\}$, which is an ideal, so $(F'',I'')Q(G'',H'')$. We claim that $H''\cap J=\varnothing$, so $(G'',H'')\comp (J,K)$. For if $b\in H''$, then $\Diamond b\in I''$, which implies $\Diamond b\not\in F'$ and hence $b\not\in J$. This completes the proof of the condition.

The claimed embedding sends $a$ to $\widehat{a}$. We verify that it preserves $\Box$ and $\Diamond$. For its other claimed properties, see the proof of  Theorem 4.30 in \cite{Holliday2023}.\footnote{That proof assumes $F\cap I=\varnothing$ for each filter-ideal pair, in which case $\comp$ is reflexive, but the proof easily adapts to drop that assumption. Cf.~the proof of Theorem B.7 of \cite{Holliday2023}.}

Let us show $\widehat{\Box a}=\Box_R\widehat{a}$. Suppose $(F,I)\in \widehat{\Box a}$, so $\Box a\in F$. Then if $(F,I)R(F',I')$, we have $a\in F'$ and hence $(F',I')\in\widehat{a}$. Thus, $(F,I)\in\Box_R\widehat{a}$. Conversely, suppose $(F,I)\not\in \widehat{\Box a}$, so $\Box a\not\in F$. Let $F'=\{b \mid \Box b\in F\}$, which is a filter, and let $I'$ be the ideal generated by $\{\neg b\mid b\in F'\}$, so $(F',I')\in X$. Then $\Box a\not\in F$ implies $a\not\in F'$ and hence ${(F',I')\not\in\widehat{a}}$, and by construction of $F'$, we have $(F,I)R(F',I')$. Thus, $(F,I)\not\in\Box_R\widehat{a}$.

Finally, we show $\widehat{\Diamond a}=\Diamond_Q\widehat{a}$. Suppose $(F,I)\in\widehat{\Diamond a}$, so $\Diamond a\in F$.  Consider any $(F',I')\comp (F,I)$, so $\Diamond a\not\in I'$. Let $G'=\mathord{\uparrow}1$ and ${H'=\{b\in L\mid \Diamond b\in I'\}}$, which is an ideal, so $(F',I')Q(G',H')$. Let $G=\mathord{\uparrow}a$ and $H=\mathord{\downarrow}\neg a$, so $(G,H)\in\widehat{a}$. We claim that $H'\cap G=\varnothing$, so $(G',H')\comp (G,H)$. For otherwise $a\in H'$,  so $\Diamond a\in I'$, contradicting what we derived above. This proves that  $(F,I)\in\Diamond_Q\widehat{a}$. Conversely, suppose $(F,I)\not\in\widehat{\Diamond a}$, so $\Diamond a\not\in F$. Let $F'=\mathord{\uparrow}1$ and $I'=\mathord{\downarrow}\Diamond a$, so $(F',I')\comp (F,I)$. Consider any $(G',H')$ and $(G,H)$ such that $(F',I')Q(G',H')\comp (G,H)$. Then since $\Diamond a\in I'$, we have $a\in H'$ and hence $a\not\in G$, so $(G,H)\not\in\widehat{a}$. This proves that $(F,I)\not\in\Diamond_Q\widehat{a}$.\end{proof}

 \section{Interactions}\label{Interactions}
 
We now consider the interaction of $\Box$ and $\Diamond$ via $\neg$. Of course, if $\Box$ and $\Diamond$ come from different flavors of modality, e.g., $\Box a$ means that \textit{the agent believes $a$} and $\Diamond a$ means that $a$ \textit{will hold sometime in the future}, there need be no interaction between them via $\neg$. But even if $\Box$ and $\Diamond$ are of the same flavor of modality, the interactions between them via $\neg$ may be subtle (see Remark \ref{Unawareness}).
  
  One may also consider interactions between $\Box$ and $\Diamond$ via other operations, such as $\wedge$. In classical modal logic, we have $\Box a\wedge\Diamond b\leq \Diamond (a\wedge b)$. However, this is not desirable in epistemic orthologic \cite[Example~3.39]{Holliday-Mandelkern2022}, so we do not wish to impose this constraint. Of course, the simplest interaction to consider is $\Box a\leq\Diamond a$, but this cannot be imposed for doxastic logic with possibly inconsistent agents. By contrast, some interactions between $\Box$ and $\Diamond$ via $\neg$ seem generally acceptable---when $\Box$ and $\Diamond$ come from the same flavor of modality---and will allow us to simplify our semantics by setting $R=Q$ in \S~\ref{UnificationSection} (cf.~\cite{Prenosil2023} on when a single relation suffices for distributive modal logics). Thus, here we focus only on interactions via $\neg$ and leave the study of further interactions for future~work.
 
 \subsection{Lattice inequalities}\label{LIs}

Consider the following axioms, implicitly universally quantified:
 \begin{align}
 \Diamond\neg a\leq \neg\Box a\tag{$\Diamond\neg$}\label{DiamondNeg}\\
  \Box\neg a\leq \neg\Diamond a \tag{$\Box\neg$}\label{BoxNeg}\\
 \neg \Diamond a\leq \Box \neg a\tag{$\neg\Diamond$}\label{NegDiamond}\\
 \neg\Box a\leq \Diamond \neg a.\tag{$\neg\Box$}\label{NegBox}
 \end{align}
 \noindent First we observe that over the most general algebras considered in \S~\ref{AlgFrames}, the above axioms are all independent.
 
 \begin{proposition}\label{AllInd} Each of \textnormal{(}\ref{DiamondNeg}\textnormal{)}, \textnormal{(}\ref{BoxNeg}\textnormal{)}, \textnormal{(}\ref{NegDiamond}\textnormal{)}, and  \textnormal{(}\ref{NegBox}\textnormal{)} is independent of all the others over finite lattices equipped with an antitone $\neg$ sending $1$ to $0$,  multiplicative $\Box$, and additive $\Diamond$.
 \end{proposition}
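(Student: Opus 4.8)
The plan is to prove each independence claim by exhibiting a finite counter-model: for every axiom $\chi$ among (\ref{DiamondNeg})--(\ref{NegBox}), a finite lattice carrying an antitone $\neg$ with $\neg 1 = 0$, a multiplicative $\Box$, and an additive $\Diamond$, in which the other three axioms hold but $\chi$ fails. Four such algebras establish the proposition, since it then follows that $\chi$ is not a consequence of the conjunction of the remaining three.

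Two observations reduce the labour. First, on a chain every monotone unary map automatically preserves all meets and joins, so on the three-element chain $\{0<u<1\}$ one is free to take $\Box$ to be any monotone map with $\Box 1 = 1$ and $\Diamond$ any monotone map with $\Diamond 0 = 0$, and checking an axiom reduces to the three inequalities obtained by plugging in $a\in\{0,u,1\}$. Second, if one additionally arranges that $\neg 0 = 1$ in the counter-models (which is harmless), then the relevant class is closed under the transformation $(L,\leq,\neg,\Box,\Diamond)\mapsto(L,\geq,\neg,\Diamond,\Box)$ that passes to the order dual and swaps $\Box$ with $\Diamond$ (the swap turns a multiplicative operation into an additive one and conversely, and antitonicity plus $\neg 0 = 1, \neg 1 = 0$ is self-dual). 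Tracking the inequalities through this transformation shows that it interchanges the validity of (\ref{DiamondNeg}) with that of (\ref{NegDiamond}), and the validity of (\ref{BoxNeg}) with that of (\ref{NegBox}). Hence it suffices to construct two algebras---one refuting (\ref{BoxNeg}) alone and one refuting (\ref{DiamondNeg}) alone---and then dualize each to obtain the models for (\ref{NegBox}) and (\ref{NegDiamond}).

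Concretely, I would do this on $\{0<u<1\}$ with the non-involutive negation $\neg 0 = \neg u = 1$, $\neg 1 = 0$. For (\ref{BoxNeg}): let $\Box$ be the identity and let $\Diamond$ send $0\mapsto 0$, $u\mapsto 1$, $1\mapsto 1$; then (\ref{BoxNeg}) fails at $a = u$ (there $\Box\neg u = \Box 1 = 1$ but $\neg\Diamond u = \neg 1 = 0$), while a short check confirms (\ref{DiamondNeg}), (\ref{NegDiamond}), (\ref{NegBox}). For (\ref{DiamondNeg}): keep the same $\neg$, let $\Box$ be the constant map $1$, and let $\Diamond$ send $0,u\mapsto 0$, $1\mapsto u$; then (\ref{DiamondNeg}) fails at $a = u$ ($\Diamond\neg u = \Diamond 1 = u \not\leq 0 = \neg\Box u$), while (\ref{BoxNeg}), (\ref{NegDiamond}), (\ref{NegBox}) hold. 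Dualizing gives the two remaining algebras (again on $\{0<u<1\}$, now with Gödel negation $0\mapsto 1$, $u,1\mapsto 0$). Every verification is a finite case check.

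There is no deep obstacle here; the one point demanding care is that over very small algebras the four inequalities are entangled. In particular, if $\neg$ is an involution then applying $\neg$ and substituting $\neg a$ for $a$ shows that (\ref{DiamondNeg}) is equivalent to (\ref{BoxNeg}) and (\ref{NegDiamond}) to (\ref{NegBox}), so no involutive-negation algebra can separate all four; this is why the counter-models use asymmetric negations. The remaining work is simply to search among a handful of small monotone choices for $\Box$ and $\Diamond$ and confirm that exactly the intended axiom is refuted---something that could equally be settled by a brute-force enumeration of small finite lattices with operations of the required kind.
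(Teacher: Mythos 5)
Your proof is correct: all four of your algebras check out (I verified the two explicit chains and their duals case by case), your observation that monotone maps on a chain are automatically multiplicative and additive is right, and the order-duality-plus-operator-swap $(L,\leq,\neg,\Box,\Diamond)\mapsto(L,\geq,\neg,\Diamond,\Box)$ does interchange (\ref{DiamondNeg}) with (\ref{NegDiamond}) and (\ref{BoxNeg}) with (\ref{NegBox}) exactly as you claim. The overall strategy is the same as the paper's---exhibit, for each axiom, a finite algebra refuting it and validating the other three---but the execution differs. You work entirely on three-element chains and construct only two models, obtaining the other two by duality; the paper instead builds two ad hoc four-element examples for (\ref{DiamondNeg}) and (\ref{BoxNeg}) and delegates the remaining two cases to its Propositions \ref{NegDiamondInd}.(\ref{NegDiamondInd2}) and \ref{NegBoxInd}, whose counterexamples live on Heyting algebras. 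Your route is more economical and self-contained, and the duality symmetry is a genuine structural insight the paper does not exploit; the paper's route buys the stronger by-product that (\ref{NegDiamond}) and (\ref{NegBox}) remain independent even over Heyting algebras with pseudocomplementation, which matters for its discussion of intuitionistic modal logic. One small point worth making explicit if you write this up: your duality argument needs $\neg 0=1$ so that the dual negation still sends the new top to the new bottom---you note this, and both of your models satisfy it---and your non-dually-self-adjoint choice of $\neg$ on the first two chains is essential, since by Proposition \ref{DSAM} a dually self-adjoint negation would force (\ref{DiamondNeg}) and (\ref{BoxNeg}) to stand or fall together.
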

 \begin{proof} The independence of (\ref{NegDiamond}) and (\ref{NegBox}) will be shown in Propositions \ref{NegDiamondInd}.(\ref{NegDiamondInd2}) and \ref{NegBoxInd}, respectively. For (\ref{DiamondNeg}), consider the following four-element lattice equipped with the following $\neg$, $\Box$, and $\Diamond$: 
    \begin{center}
    \begin{minipage}{1.5in}
     \begin{tikzpicture}[->,>=stealth',shorten >=1pt,shorten <=1pt, auto,node
distance=2cm,thick,every loop/.style={<-,shorten <=1pt}]
\tikzstyle{every state}=[fill=gray!20,draw=none,text=black]
\node (0) at (0,0) {{$0$}};
\node (a) at (-.75,.75) {{$a$}};
\node (b) at (.75,.75) {{$b$}};
\node (1) at (0,1.5) {{$1$}};

\path (1) edge[-] node {{}} (a);
\path (1) edge[-] node {{}} (b);
\path (a) edge[-] node {{}} (0);
\path (b) edge[-] node {{}} (0);
\end{tikzpicture}\end{minipage}\begin{minipage}{1.5in}\begin{tabular}{cccc}
 $x$ & $\neg x$ & $\Box x$ & $\Diamond x$\\
 \hline
 $1$ & $0$ & $1$ & $1$ \\
 $a$ & $0$ & $a$ & $1$ \\
  $b$ & $a$ & $b$ & $b$ \\
 $0$ & $1$ & $0$ & $0$
 \end{tabular}
 \end{minipage}
 \end{center}
 Then $\neg$ is antitone and sends $1$ to $0$, $\Box$ is multiplicative, $\Diamond$ is additive, and (\ref{BoxNeg}), (\ref{NegDiamond}), and (\ref{NegBox}) hold. However, $\Diamond \neg b = \Diamond a = 1\not\leq a = \neg b = \neg \Box b$, so  (\ref{DiamondNeg}) does not hold. For (\ref{BoxNeg}), consider the same lattice as above with the same $\neg$ but with the following $\Box$ and $\Diamond$:
 \begin{center}
 \begin{tabular}{cccc}
 $x$ & $\neg x$ & $\Box x$ & $\Diamond x$\\
 \hline
 $1$ & $0$ & $1$ & $1$ \\
 $a$ & $0$ & $1$ & $1$ \\
  $b$ & $a$ & $0$ & $1$ \\
 $0$ & $1$ & $0$ & $0$
 \end{tabular}
 \end{center}
  Then $\neg$ is antitone and sends $1$ to $0$, $\Box$ is multiplicative, $\Diamond$ is additive, and (\ref{DiamondNeg}), (\ref{NegDiamond}), and (\ref{NegBox}) hold. However, $\Box \neg b = \Box a =1 \not\leq 0 = \neg 1 = \neg \Diamond b$, so  (\ref{BoxNeg}) does not hold.\end{proof}
 
 \subsubsection{Interactions in fundamental logic}

In the context of fundamental logic, we collapse one distinction from~\S~\ref{LIs}.

 \begin{proposition}\label{DSAM} If $\neg$ is dually self-adjoint and $\Box$ and $\Diamond$ are monotone, then \textnormal{(}\ref{DiamondNeg}\textnormal{)} is equivalent to \textnormal{(}\ref{BoxNeg}\textnormal{)}.
 \end{proposition}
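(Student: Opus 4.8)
The plan is to prove both implications by the same simple manoeuvre: instantiate the assumed inequality at $\neg a$ in place of $a$, peel off the resulting double negation using that $\neg$ is \emph{double inflationary} (which holds by Lemma \ref{DSA}, since $\neg$ is dually self-adjoint) together with the monotonicity of $\Box$ and $\Diamond$, and then flip the inequality into the desired form via dual self-adjointness.

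Concretely, for $(\ref{DiamondNeg})\Rightarrow(\ref{BoxNeg})$, I would fix $a$ and apply $(\ref{DiamondNeg})$ with $\neg a$ in place of $a$, obtaining $\Diamond\neg\neg a\leq\neg\Box\neg a$. Since $a\leq\neg\neg a$ (Lemma \ref{DSA}) and $\Diamond$ is monotone, $\Diamond a\leq\Diamond\neg\neg a\leq\neg\Box\neg a$, i.e.\ $\Diamond a\leq\neg(\Box\neg a)$. Now dual self-adjointness, applied with $\Diamond a\leq\neg(\Box\neg a)$, gives $\Box\neg a\leq\neg\Diamond a$, which is $(\ref{BoxNeg})$. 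The converse $(\ref{BoxNeg})\Rightarrow(\ref{DiamondNeg})$ is entirely symmetric: apply $(\ref{BoxNeg})$ at $\neg a$ to get $\Box\neg\neg a\leq\neg\Diamond\neg a$, use $a\leq\neg\neg a$ and monotonicity of $\Box$ to obtain $\Box a\leq\neg(\Diamond\neg a)$, and conclude $\Diamond\neg a\leq\neg\Box a$ by dual self-adjointness.

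I do not expect a genuine obstacle here; this is a short syntactic argument. The only points requiring a little care are (i) invoking Lemma \ref{DSA} for $a\leq\neg\neg a$ rather than assuming it outright, and (ii) checking that only \emph{monotonicity} of $\Box$ and $\Diamond$ is used — which is exactly what the hypothesis supplies — so no (complete) multiplicativity, additivity, distributivity, or completeness of the lattice is needed beyond what is already in force.
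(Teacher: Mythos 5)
Your proof is correct and is essentially the paper's own argument: the same three ingredients (instantiation at $\neg a$, double inflation from Lemma \ref{DSA} plus monotonicity, and a flip by dual self-adjointness) appear in both, merely applied in a slightly different order, and the paper additionally packages the argument as a general lemma about a monotone $f$ and arbitrary $g$. Your observation in (ii) that each direction uses only the monotonicity of the operation on the left of the assumed inequality matches the paper's formulation exactly.
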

 \begin{proof} Assume $\neg$ is dually self-adjoint and hence antitone and double inflationary by Lemma \ref{DSA}.  Assuming $f$ is a monotone unary operation and $g$ an arbitrary unary operation, we prove that if for all $a\in L$, $f(\neg a)\leq \neg g(a)$, then for all $a\in L$, $g(\neg a)\leq \neg f(a)$, from which the statement in the lemma follows.
 
  Assume $f(\neg a)\leq \neg g(a)$ for all $a\in L$. By dual self-adjointness, $f(\neg a)\leq\neg g(a)$ implies $g(a)\leq\neg f( \neg a)$, so for all $a\in L$, we have $g(\neg a)\leq \neg f(\neg\neg a)$. Then since $a\leq\neg\neg a$, we have $f(a)\leq f(\neg\neg a)$ by the monotonicity of $f$, so $\neg f(\neg\neg a)\leq \neg f(a)$ by the antitonicity of $\neg$. Hence $g(\neg a)\leq \neg f(a)$.\end{proof}
  
 \begin{remark}\label{Unawareness} Consider the interpretation of the modalities where $\Box a$ means \textit{the agent is certain that $a$} and $\Diamond a$ means \textit{the agent considers it possible that~$a$}. Then (\ref{DiamondNeg}) and (\ref{BoxNeg}) are plausible. Yet  (\ref{NegDiamond}) is questionable: from the assumption that an agent does not consider it possible that $a$, it does not follow that the agent is certain that $\neg a$; for the agent may be totally \textit{unaware} of $a$, neither entertaining the possibility of $a$ nor having any attitude with the content $\neg a$. Similarly, (\ref{NegBox}) is questionable: from the fact that the agent is not certain that $a$, it does not follow that the agent considers it possible that $\neg a$, again because the agent may have unawareness. Thus, neither (\ref{NegDiamond}) nor  (\ref{NegBox}) belongs in a base system of fundamental modal logic.\end{remark}
  
  \subsubsection{Interactions in intuitionistic logic}
  
In intuitionistic modal logic, where $\neg$ is \textit{pseudocomplementation} ($a\wedge b=0$ implies $b\leq \neg a$, and $a\wedge\neg a=0$) and hence dually self-adjoint, it is standard to have not only (\ref{DiamondNeg}) and (\ref{BoxNeg}) but also (\ref{NegDiamond}), despite the concern about (\ref{NegDiamond}) in Remark~\ref{Unawareness}. Note that (\ref{NegDiamond}) is an additional condition, even classically.

\begin{proposition}\label{NegDiamondInd} $\,$
\begin{enumerate}
\item\label{NegDiamondInd1} \textnormal{(}\ref{NegDiamond}\textnormal{)} is independent of \textnormal{(}\ref{DiamondNeg}\textnormal{)} and \textnormal{(}\ref{BoxNeg}\textnormal{)} over Boolean algebras  equipped with a multiplicative $\Box$ and additive $\Diamond$.
\item\label{NegDiamondInd2} \textnormal{(}\ref{NegDiamond}\textnormal{)} is independent of \textnormal{(}\ref{DiamondNeg}\textnormal{)}, \textnormal{(}\ref{BoxNeg}\textnormal{)}, and \textnormal{(}\ref{NegBox}\textnormal{)} over Heyting algebras equipped with a multiplicative $\Box$ and additive $\Diamond$.
\end{enumerate} \end{proposition}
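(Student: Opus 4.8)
The plan is to prove each claim by exhibiting a countermodel: for part \ref{NegDiamondInd1} a nontrivial Boolean algebra, and for part \ref{NegDiamondInd2} a (necessarily non-Boolean) Heyting algebra, in each case equipped with a multiplicative $\Box$ and an additive $\Diamond$ validating (\ref{DiamondNeg}) and (\ref{BoxNeg}) — and, for part \ref{NegDiamondInd2}, also (\ref{NegBox}) — while refuting (\ref{NegDiamond}). Two general remarks keep the verifications short. First, the algebras will be finite, so ``additive'' coincides with ``completely additive'', and on a chain any monotone $\Box$ with $\Box 1 = 1$ is completely multiplicative and dually for $\Diamond$; hence multiplicativity of $\Box$ and additivity of $\Diamond$ will be immediate. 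Second, since Boolean and Heyting negation are dually self-adjoint and the operations $\Box,\Diamond$ used below are monotone, Proposition \ref{DSAM} makes (\ref{DiamondNeg}) equivalent to (\ref{BoxNeg}) in each model, so it suffices to check (\ref{DiamondNeg}), the failure of (\ref{NegDiamond}), and — for part \ref{NegDiamondInd2} only — (\ref{NegBox}).

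For part \ref{NegDiamondInd1}, I would take the two-element Boolean algebra (any nontrivial Boolean algebra works), let $\Box$ be the identity operation, and let $\Diamond$ be the constant operation with value $0$. Then $\Diamond\neg a = 0 \leq \neg\Box a$, so (\ref{DiamondNeg}) holds; but $\neg\Diamond a = \neg 0 = 1$ while $\Box\neg a = \neg a$, so (\ref{NegDiamond}) fails at $a = 1$.

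For part \ref{NegDiamondInd2}, I would first record the obstruction that forces the shape of the example, and explains why the claim restricts to Heyting algebras: over a Boolean algebra, instantiating (\ref{DiamondNeg}) and (\ref{NegBox}) at $\neg a$ and using $\neg\neg a = a$ yields $\Diamond a = \neg\Box\neg a$, whence $\neg\Diamond a = \Box\neg a$ and (\ref{NegDiamond}) holds. So a proper Heyting algebra is genuinely needed. Take the three-element chain $0 < u < 1$ as a Heyting algebra, so $\neg 0 = 1$ and $\neg u = \neg 1 = 0$; let $\Box$ be the identity and set $\Diamond 0 = \Diamond u = 0$, $\Diamond 1 = 1$. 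This $\Diamond$ is monotone with $\Diamond 0 = 0$, hence additive on the chain, and $\Box$ is trivially multiplicative. Running through $a \in \{0, u, 1\}$ shows that (\ref{DiamondNeg}) and (\ref{NegBox}) hold, hence so does (\ref{BoxNeg}), while (\ref{NegDiamond}) fails at $a = u$: there $\neg\Diamond u = \neg 0 = 1$ but $\Box\neg u = \neg u = 0$.

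The arithmetic is routine; the only substantive step is locating the algebras, and the only place needing a moment's thought is part \ref{NegDiamondInd2}. There one must notice that (\ref{DiamondNeg}) together with (\ref{NegBox}) collapses $\Diamond$ to the De Morgan dual $\neg\Box\neg$ over Boolean algebras — forcing (\ref{NegDiamond}) — and then recognize that the smallest non-Boolean Heyting algebra, the three-element chain with $\Box$ the identity and $\Diamond$ the ``detects the top'' map, already breaks this precisely because $\neg\neg$ is not the identity on $u$.
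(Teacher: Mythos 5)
Your proposal is correct and follows the same method as the paper: exhibit a finite counterexample in which the other axioms hold but (\ref{NegDiamond}) fails. The only differences are that your algebras are smaller (the two-element Boolean algebra and the three-element Heyting chain, versus the paper's four- and five-element lattices), and your side remark explaining why part (\ref{NegDiamondInd2}) cannot be witnessed over Boolean algebras is a correct and worthwhile addition; all verifications check out, including the empty-meet/join conditions $\Box 1 = 1$ and $\Diamond 0 = 0$.
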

   \begin{proof}  For part (\ref{NegDiamondInd1}), consider the four-element Boolean algebra equipped with the following $\Box$ and $\Diamond$: 
    \begin{center}
    \begin{minipage}{1.5in}
     \begin{tikzpicture}[->,>=stealth',shorten >=1pt,shorten <=1pt, auto,node
distance=2cm,thick,every loop/.style={<-,shorten <=1pt}]
\tikzstyle{every state}=[fill=gray!20,draw=none,text=black]
\node (0) at (0,0) {{$0$}};
\node (a) at (-.75,.75) {{$a$}};
\node (b) at (.75,.75) {{$b$}};
\node (1) at (0,1.5) {{$1$}};

\path (1) edge[-] node {{}} (a);
\path (1) edge[-] node {{}} (b);
\path (a) edge[-] node {{}} (0);
\path (b) edge[-] node {{}} (0);
\end{tikzpicture}\end{minipage}\begin{minipage}{1.5in}\begin{tabular}{ccc}
 $x$ & $\Box x$ & $\Diamond x$\\
 \hline
 $1$ & $1$ & $1$ \\
 $a$ & $0$ & $1$ \\
  $b$ & $0$ & $0$ \\
 $0$ & $0$ & $0$
 \end{tabular}
 \end{minipage}
 \end{center}
   Then $\Box$ is multiplicative, $\Diamond$ is additive, $\Box x\leq\Diamond x$, and (\ref{DiamondNeg}) and (\ref{BoxNeg}) hold. However, we have $\neg \Diamond b =\neg 0=1\not\leq 0=\Box a=\Box\neg b$, so (\ref{NegDiamond}) does not hold. 
   
   For part (\ref{NegDiamondInd2}), consider the following five-element Heyting algebra with $\Box$ and $\Diamond$ operations:
   
   \begin{center}
   \begin{minipage}{1.5 in}
   \begin{tikzpicture}[->,>=stealth',shorten >=1pt,shorten <=1pt, auto,node
distance=2cm,thick,every loop/.style={<-,shorten <=1pt}]
\tikzstyle{every state}=[fill=gray!20,draw=none,text=black]
\node (0) at (0,0) {{$0$}};
\node (a) at (-.75,.75) {{$a$}};
\node (b) at (.75,.75) {{$b$}};
\node (1) at (0,1.5) {{$c$}};
\node  (new1) at (0,2.25) {{$1$}};

\path (new1) edge[-] node {{}} (1);
\path (1) edge[-] node {{}} (a);
\path (1) edge[-] node {{}} (b);
\path (a) edge[-] node {{}} (0);
\path (b) edge[-] node {{}} (0);
\end{tikzpicture} \end{minipage}\begin{minipage}{1.5in}\begin{tabular}{ccc}
 $x$ & $\Box x$ & $\Diamond x$\\
 \hline
 $1$ & $1$ & $1$ \\
  $c$ & $1$ & $1$ \\
 $a$ & $c$ & $1$ \\
  $b$ & $0$ & $0$ \\
 $0$ & $0$ & $0$
 \end{tabular}
 \end{minipage}
\end{center}
  Then $\Box$ is multiplicative, $\Diamond$ is additive, $\Box x\leq \Diamond x$, and not only (\ref{DiamondNeg}) and (\ref{BoxNeg}) but also (\ref{NegBox}) holds:
  \begin{itemize}
  \item $\neg \Box 1 = \neg 1= 0\leq \Diamond \neg 1$; $\neg \Box c =\neg 1=0\leq\Diamond \neg c $; $\neg \Box a = \neg c = 0 \leq \Diamond \neg a$;
  \item $\neg\Box b = \neg 0 = 1= \Diamond a = \Diamond \neg b$; $\neg\Box 0 = \neg 0 = 1 = \Diamond 1 = \Diamond \neg 0$.
  \end{itemize}
  However, we have $\neg \Diamond b = \neg 0 = 1\not\leq c = \Box a = \Box \neg b$, so (\ref{NegDiamond}) does not hold.   \end{proof}

 As noted in \S~\ref{Intro}, (\ref{NegBox}) is generally not  assumed in intuitionistic modal logic.
 
 \begin{proposition}\label{NegBoxInd} \textnormal{(}\ref{NegBox}\textnormal{)} is independent of \textnormal{(}\ref{DiamondNeg}\textnormal{)}, \textnormal{(}\ref{BoxNeg}\textnormal{)}, and \textnormal{(}\ref{NegDiamond}\textnormal{)} over Heyting algebras $H$ equipped with a multiplicative $\Box$ and additive $\Diamond$, even assuming that $\Box a\leq \Diamond a$ for all $a\in H$.
 \end{proposition}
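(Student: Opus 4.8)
The plan is to prove this by a single counterexample: a finite Heyting algebra $H$ carrying a multiplicative $\Box$ and an additive $\Diamond$ with $\Box a\leq\Diamond a$ for all $a\in H$, in which (\ref{DiamondNeg}), (\ref{BoxNeg}), and (\ref{NegDiamond}) all hold but (\ref{NegBox}) fails. The guiding idea is that (\ref{NegBox}) is easy to violate at a \emph{dense} element: if $\neg a = 0$ then, since $\Diamond$ is additive, $\Diamond\neg a = \Diamond 0 = 0$, so (\ref{NegBox}) at $a$ reduces to $\neg\Box a\leq 0$; hence it suffices to make $\Box a$ fail to be dense while keeping the other three axioms intact.

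The cleanest realization uses the three-element chain $0<m<1$, a non-Boolean Heyting algebra in which $\neg 0 = 1$ and $\neg m = \neg 1 = 0$, so $m$ is dense. I would set $\Box 1 = 1$, $\Box m = \Box 0 = 0$ and $\Diamond 1 = 1$, $\Diamond m = m$, $\Diamond 0 = 0$. On a chain, a monotone operation that preserves the top is automatically multiplicative (binary meets are minima, and $\min(\Box x,\Box y) = \Box(\min(x,y))$ by monotonicity), and dually a monotone operation preserving the bottom is additive; thus $\Box$ is multiplicative, $\Diamond$ is additive, and $\Box x\leq\Diamond x$ holds pointwise.

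The remaining work is a short check, over $a\in\{0,m,1\}$, that (\ref{DiamondNeg}), (\ref{BoxNeg}), and (\ref{NegDiamond}) hold — each reduces to comparisons among $0$ and $1$ using $\neg 0 = 1$, $\neg m = \neg 1 = 0$, $\Box m = \Box 0 = \Diamond 0 = 0$, and $\Box 1 = \Diamond 1 = 1$ — together with the observation that at $a = m$ we get $\neg\Box m = \neg 0 = 1 \not\leq 0 = \Diamond 0 = \Diamond\neg m$, witnessing the failure of (\ref{NegBox}). I expect no genuine obstacle; the only step deserving a moment's attention is the claim that on a chain monotonicity plus preservation of the relevant bound yields multiplicativity/additivity, which is immediate because every meet and join in play is taken along a finite chain.
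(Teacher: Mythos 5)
Your proposal is correct and is essentially the paper's own proof: the paper also uses the three-element Heyting chain with $\Box 1=1$, $\Box m=\Box 0=0$, exploiting the dense middle element exactly as you describe, the only (immaterial) difference being that the paper takes $\Diamond m=1$ where you take $\Diamond m=m$. All the verifications go through as you indicate.
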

 \begin{proof} Consider the three-element Heyting algebra equipped with the following $\Box$ and $\Diamond$:
 \begin{center}
 \begin{minipage}{1.25in}  \begin{tikzpicture}[->,>=stealth',shorten >=1pt,shorten <=1pt, auto,node
distance=2cm,thick,every loop/.style={<-,shorten <=1pt}]
\tikzstyle{every state}=[fill=gray!20,draw=none,text=black]
\node (0) at (0,0) {{$0$}};
\node (a) at (0,.75) {{$a$}};
\node  (1) at (0,1.5) {{$1$}};

\path (1) edge[-] node {{}} (a);

\path (a) edge[-] node {{}} (0);
\end{tikzpicture} 
 \end{minipage}\begin{minipage}{1.25in}
 \begin{tabular}{ccc}
 $x$ & $\Box x$ & $\Diamond x$\\
 \hline
 $1$ & $1$ & $1$ \\
 $a$ & $0$ & $1$ \\
 $0$ & $0$ & $0$
 \end{tabular}\end{minipage}
 \end{center}
 Then $\Box$ is multiplicative, $\Diamond$ is additive, and (\ref{DiamondNeg}), (\ref{BoxNeg}), and (\ref{NegDiamond})  hold. But $\neg \Box a = \neg 0 = 1\not\leq 0 = \Diamond 0 = \Diamond \neg a$, so (\ref{NegBox}) does not hold.\end{proof}
 
 \subsubsection{Interactions in orthologic}
  
 Finally, in the context of modal orthologic, where $\neg$ is involutive, it is natural to take $\Diamond$ and $\Box$ to be duals in the following sense:
  \begin{align}
  \Diamond a   &= \neg\Box\neg a \tag{$\Diamond$ def}\label{DiamondDef}\\
  \Box a  &= \neg\Diamond\neg a \tag{$\Box$ def} \label{BoxDef}.
  \end{align}
  
  \begin{proposition}\label{InvLemm} If $\neg$ is antitone and involutive and $\Box$, $\Diamond$ are monotone, then:
  \begin{enumerate}
  \item\label{InvLemmA} \textnormal{(}\ref{DiamondNeg}\textnormal{)} and  \textnormal{(}\ref{BoxNeg}\textnormal{)} are equivalent;
  \item\label{InvLemmB} \textnormal{(}\ref{NegDiamond}\textnormal{)} and \textnormal{(}\ref{NegBox}\textnormal{)} are equivalent; 
  \item\label{InvLemmC} \textnormal{(}\ref{DiamondDef}\textnormal{)} and \textnormal{(}\ref{BoxDef}\textnormal{)} are equivalent to each other and to the conjunction of \textnormal{(}\ref{DiamondNeg}\textnormal{)}, \textnormal{(}\ref{BoxNeg}\textnormal{)}, \textnormal{(}\ref{NegDiamond}\textnormal{)}, and \textnormal{(}\ref{NegBox}\textnormal{)}.
  \end{enumerate}
  \end{proposition}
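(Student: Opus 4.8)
The plan is to prove Proposition \ref{InvLemm} by a chain of implications, exploiting involutivity ($\neg\neg a = a$) to convert inequalities into their ``contrapositive'' forms freely. Throughout I will use that, for antitone involutive $\neg$, the inequality $x \leq \neg y$ is equivalent to $y \leq \neg x$ (apply $\neg$ and cancel double negations), and that $x \leq y$ iff $\neg y \leq \neg x$.

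For part \ref{InvLemmA}, I would show (\ref{DiamondNeg}) $\Rightarrow$ (\ref{BoxNeg}) and symmetrically the converse. From $\Diamond\neg a \leq \neg\Box a$, substitute $\neg a$ for $a$ to get $\Diamond\neg\neg a \leq \neg\Box\neg a$, i.e.\ $\Diamond a \leq \neg\Box\neg a$ by involutivity; applying antitone $\neg$ and cancelling gives $\Box\neg a \leq \neg\Diamond a$, which is (\ref{BoxNeg}). The reverse direction is the mirror image (substitute $\neg a$, use involutivity, apply $\neg$). Alternatively, one can simply cite Proposition \ref{DSAM}, since an antitone involutive $\neg$ is in particular dually self-adjoint (if $a \leq \neg b$ then $b = \neg\neg b \leq \neg a$), and monotone $\Box$, $\Diamond$ satisfy its hypotheses; but the direct substitution argument is cleaner here and also covers the case where we only want the bare equivalence. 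Part \ref{InvLemmB} is entirely analogous: from $\neg\Diamond a \leq \Box\neg a$, substitute $\neg a$ and use involutivity to get $\neg\Diamond\neg a \leq \Box a$, then apply $\neg$ and cancel to obtain $\neg\Box a \leq \Diamond\neg a$, which is (\ref{NegBox}); and conversely.

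For part \ref{InvLemmC}, I first note that (\ref{DiamondDef}) and (\ref{BoxDef}) are interdefinable: applying antitone involutive $\neg$ to both sides of $\Diamond a = \neg\Box\neg a$ gives $\neg\Diamond a = \Box\neg a$, and substituting $\neg a$ for $a$ and using involutivity yields $\neg\Diamond\neg a = \Box a$, which is (\ref{BoxDef}); the converse is symmetric. Next, (\ref{DiamondDef}) is the conjunction of the inequality $\Diamond a \leq \neg\Box\neg a$ and the inequality $\neg\Box\neg a \leq \Diamond a$. The first, with $\neg a$ substituted for $a$ and $\neg$ applied, is exactly (\ref{DiamondNeg}) in the form $\Diamond\neg a \leq \neg\Box a$; the second, similarly rearranged, is exactly (\ref{NegBox}) in the form $\neg\Box a \leq \Diamond\neg a$. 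So (\ref{DiamondDef}) $\Leftrightarrow$ (\ref{DiamondNeg}) $\wedge$ (\ref{NegBox}). By part \ref{InvLemmA}, (\ref{DiamondNeg}) $\Leftrightarrow$ (\ref{BoxNeg}), and by part \ref{InvLemmB}, (\ref{NegBox}) $\Leftrightarrow$ (\ref{NegDiamond}), so (\ref{DiamondNeg}) $\wedge$ (\ref{NegBox}) is equivalent to the conjunction of all four axioms. Combining, (\ref{DiamondDef}) (equivalently (\ref{BoxDef})) is equivalent to (\ref{DiamondNeg}) $\wedge$ (\ref{BoxNeg}) $\wedge$ (\ref{NegDiamond}) $\wedge$ (\ref{NegBox}).

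There is no real obstacle here — the proof is a bookkeeping exercise in moving $\neg$ across inequalities — so the only thing to be careful about is which of the two ``directions'' of each double-negation cancellation is being used, and making sure each of the four named axioms is matched to the correct one of the two inequalities hidden inside an equation. In particular I would double-check the pairing in part \ref{InvLemmC}: the ``$\leq$'' half of (\ref{DiamondDef}) should deliver (\ref{DiamondNeg}) and the ``$\geq$'' half should deliver (\ref{NegBox}), and it is worth writing out both verifications explicitly since a sign slip there is the most likely error. The monotonicity hypotheses on $\Box$ and $\Diamond$ are not actually needed for \ref{InvLemmA}–\ref{InvLemmC} as stated via the substitution argument — they would only be needed if one instead routed everything through Proposition \ref{DSAM} — so I would mention that the hypotheses are included only for uniformity with the other propositions in this subsection.
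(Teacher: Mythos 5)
Your proof is correct, and for parts \ref{InvLemmB} and \ref{InvLemmC} it is essentially the paper's own argument: the same substitution of $\neg a$ for $a$ followed by antitonicity and cancellation of double negations, with only a cosmetic difference in part \ref{InvLemmC} (you derive the two halves of (\ref{DiamondDef}) from (\ref{DiamondNeg}) and (\ref{NegBox}), the paper derives them from (\ref{BoxNeg}) and (\ref{NegDiamond}); by parts \ref{InvLemmA} and \ref{InvLemmB} these are interchangeable). The one genuine divergence is part \ref{InvLemmA}: the paper observes that an antitone involutive $\neg$ is dually self-adjoint (via Lemma \ref{DSA}) and then invokes Proposition \ref{DSAM}, whose proof does use the monotonicity of the modal operations, whereas your direct substitution argument bypasses that and, as you correctly note, shows the monotonicity hypotheses are not needed for any of \ref{InvLemmA}--\ref{InvLemmC} once $\neg$ is involutive (since $\neg\neg a = a$ holds as an identity of elements, $\Box\neg\neg a$ is literally $\Box a$, so no monotonicity is ever invoked). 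That is a small but real gain in generality; the paper's route buys only the reuse of an already-proved proposition. Your self-flagged worry about the pairing in part \ref{InvLemmC} is resolved correctly in your own text --- the $\leq$ half of (\ref{DiamondDef}) does give (\ref{DiamondNeg}) and the $\geq$ half gives (\ref{NegBox}) --- though note that for the first half the substitution of $\neg a$ alone already yields (\ref{DiamondNeg}); additionally applying $\neg$ as your sentence suggests would instead land you on a form of (\ref{BoxDef})'s lower bound, which is harmless but worth tidying.
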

  
  \begin{proof} For part (\ref{InvLemmA}), since $\neg$ is antitone and involutive, it is dually self-adjoint by Lemma \ref{DSA}, so (\ref{DiamondNeg}) and (\ref{BoxNeg}) are equivalent by Lemma~\ref{DSAM}. For part (\ref{InvLemmB}), assume (\ref{NegDiamond}).  As an instance, we have $ \neg \Diamond \neg a\leq \Box \neg \neg a$, which implies $\neg\Box a\leq \Diamond \neg a$ by antitonicity and involution, so  (\ref{NegBox}) holds. Now assume (\ref{NegBox}). As an instance, we have $\neg\Box\neg a\leq \Diamond\neg\neg a$, which implies $\neg\Diamond a\leq \Box\neg a$ by antitonicity and involution, so (\ref{NegDiamond}) holds. For part (\ref{InvLemmC}), clearly  (\ref{DiamondDef}) and (\ref{BoxDef}) are equivalent given involution and imply (\ref{DiamondNeg}), (\ref{BoxNeg}), (\ref{NegDiamond}), and (\ref{NegBox}) given involution. Conversely, assume (\ref{DiamondNeg}), (\ref{BoxNeg}), (\ref{NegDiamond}), and (\ref{NegBox}).   By (\ref{NegDiamond}), we have $\neg\Diamond a\leq \Box \neg a$, which implies $\neg\Box\neg a\leq\Diamond a$ by antitonicity and involution. By (\ref{BoxNeg}), we have $\Box\neg a\leq \neg\Diamond a$, which implies $\Diamond a\leq \neg\Box\neg a$ by antitonicity and involution. Hence $\Diamond a=\neg\Box\neg a$, so (\ref{DiamondDef}) holds.   \end{proof}
  
 \subsection{Frame conditions}
 
 Let us now identify frame conditions sufficient for the principle (\ref{DiamondNeg}), which seems unobjectionable (when the same flavor of modality is involved on both sides), and (\ref{NegDiamond}), which is typically assumed in intuitionistic modal logic.

  \begin{proposition}\label{DiamondNegProp} Let $(X,\comp,R,Q)$  be a modal frame in which $Q\subseteq R$.  Then for every $c_\comp$-fixpoint $A$, \[\Diamond\neg A\subseteq \neg\Box A.\]
\end{proposition}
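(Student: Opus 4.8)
The plan is to unfold both sides of the inclusion and chase an arbitrary point through the definitions; no use is made of the modal-frame condition of Definition \ref{ModalFrameDef} or of additivity, only of the hypothesis $Q\subseteq R$ (and in fact the argument goes through for an arbitrary $A\subseteq X$, not just a $c_\comp$-fixpoint). Recall that $x\in\neg_\comp\Box_R A$ means that for every $v\comp x$ we have $v\notin\Box_R A$, i.e.\ $R(v)\not\subseteq A$; and that $x\in\Diamond_Q\neg_\comp A$ means that for every $x'\comp x$ there are $y'\in Q(x')$ and $y\compflip y'$ with $y\in\neg_\comp A$.

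So I would fix $x\in\Diamond_Q\neg_\comp A$ and an arbitrary $v\comp x$, with the goal of exhibiting a point in $R(v)\setminus A$. First, instantiate the defining condition of $\Diamond_Q\neg_\comp A$ at $x'=v$, obtaining some $y'\in Q(v)$ and some $y$ with $y\compflip y'$ (equivalently $y'\comp y$) such that $y\in\neg_\comp A$. Since $Q\subseteq R$, we have $y'\in R(v)$, i.e.\ $vRy'$. It remains to see $y'\notin A$: this is immediate because $y\in\neg_\comp A$ and $y'\comp y$, so by the definition of $\neg_\comp$ the point $y'$ cannot lie in $A$. Hence $y'\in R(v)\setminus A$, so $v\notin\Box_R A$; as $v\comp x$ was arbitrary, $x\in\neg_\comp\Box_R A$, which gives $\Diamond_Q\neg_\comp A\subseteq\neg_\comp\Box_R A$.

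The only real pitfall is bookkeeping with the direction of $\comp$ versus $\compflip$: the tempting first guess is that the point $y$ delivered by the $\Diamond_Q$-clause is the relevant $R$-successor of $v$, but it is not. It is the intermediate point $y'$ — which is $Q$-accessible, hence $R$-accessible, from $v$ — that witnesses $R(v)\not\subseteq A$, while $y$ serves only to certify, via $y\in\neg_\comp A$ together with $y'\comp y$, that $y'\notin A$. Once this is in focus the argument is a two-step diagram chase with no further obstacle.
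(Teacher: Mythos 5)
Your argument is correct and is essentially the paper's proof: the paper runs the same two-step chase by contradiction (its $y$, $z$, $w$ are your $v$, $y'$, $y$), while you phrase it directly, and both use only $Q\subseteq R$ together with the definitions of $\Diamond_Q$, $\Box_R$, and $\neg_\comp$. Your observation that the intermediate point $y'$, not $y$, is the witness in $R(v)\setminus A$ is exactly the key bookkeeping step in the paper's version as well.
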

\begin{proof} Suppose $x\in\Diamond\neg A$. Toward a contradiction, suppose $x\not\in \neg\Box A$, so there is a $y\comp x$ such that $y\in \Box A$. Since $y\comp x$ and $x\in\Diamond\neg A$, there is a $z\in Q(y)$  and $w\compflip z$ with $w\in \neg A$, which implies $z\not\in A$, which contradicts the facts that $y\in \Box A$ and~$yQz$, since by our assumption $yQz$ implies $yRz$.\end{proof}

 \begin{definition}\label{NegativeFrameDef} A modal frame $(X,\comp,R,Q)$ is \textit{negative} if for all $x,y,z\in X$,
\[\mbox{if $xRy\compflip z$, then $\exists x'\comp x$ $\forall x''\comp x'$ $\exists y''$: $x''Qy''\comp z$.}\]
 \end{definition}
 
 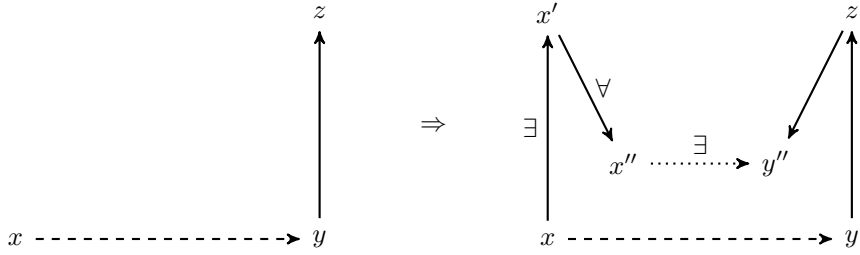
\begin{figure}[h]
\begin{center}
\begin{tikzpicture}[->,>=stealth',shorten >=1pt,shorten <=1pt, auto,node
distance=2cm,thick,every loop/.style={<-,shorten <=1pt}]
\tikzstyle{every state}=[fill=gray!20,draw=none,text=black]

\node (x) at (0,0) {{$x$}};
\node (y) at (4,0) {{$y$}};
\node (z) at (4,3) {{$z$}};
\node at (5.5,1.5) {{\textit{$\Rightarrow$}}};

\path (x) edge[dashed,->] node {{}} (y);
\path (y) edge[->] node {{}} (z);

\node (x2) at (7,0) {{$x$}};
\node (x'2) at (7,3) {{$x'$}};
\node (x''2) at (8,1) {{$x''$}};
\node (y''2) at (10,1) {{$y''$}};
\node (y2) at (11,0) {{$y$}};
\node (z2) at (11,3) {{$z$}};

\path (x2) edge[dashed,->] node {{}} (y2);
\path (x''2) edge[dotted,->] node[above] {{$\exists$}} (y''2);
\path (y2) edge[->] node {{}} (z2);
\path (x2) edge[->] node {{$\exists$}} (x'2);
\path (x''2) edge[<-] node[right] {{$\forall$}} (x'2);
\path (y''2) edge[<-] node {{}} (z2);

\end{tikzpicture}
\end{center}
\caption{Illustration of the negativity condition in Definition \ref{NegativeFrameDef}.}
\end{figure}

\begin{proposition}\label{NegDiamondBoxNeg} If $(X,\comp,R,Q)$  is a negative modal frame, then for every $c_\comp$-fixpoint $A$,
\[\neg\Diamond A\subseteq\Box\neg A.\]
\end{proposition}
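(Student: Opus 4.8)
The plan is to establish the set-theoretic inclusion $\neg_\comp(\Diamond_Q A)\subseteq\Box_R(\neg_\comp A)$ directly, unfolding the three operations and invoking the negativity condition of Definition~\ref{NegativeFrameDef} at the decisive moment; the shape of the argument mirrors the proof of Proposition~\ref{DiamondNegProp} and the fixpoint part of the proof of Proposition~\ref{BoxDiamond}. I would fix $x\in\neg_\comp(\Diamond_Q A)$ and aim to show $R(x)\subseteq\neg_\comp A$, which is what $x\in\Box_R(\neg_\comp A)$ amounts to. So I take an arbitrary $y$ with $xRy$, an arbitrary $z$ with $z\comp y$, and reduce the goal to $z\notin A$. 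The point is that $xRy$ together with $z\comp y$ is exactly the hypothesis $xRy\compflip z$, so the negativity condition applies and yields some $x'\comp x$ with the property that every $x''\comp x'$ has a $Q$-successor $y''$ with $y''\comp z$.

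The key step is then to feed this $x'$ back into the assumption on $x$: since $x'\comp x$ and $x\in\neg_\comp(\Diamond_Q A)$, we get $x'\notin\Diamond_Q A$. Negating the definition of $\Diamond_Q$ produces a witness $x''\comp x'$ such that for every $y''\in Q(x'')$ and every $v$ with $v\compflip y''$ we have $v\notin A$. Now I apply the conclusion of the negativity condition to precisely this $x''$: it furnishes a $y''$ with $x''Qy''$ and $y''\comp z$, i.e.\ $z\compflip y''$. Combining the two facts about $x''$ and $y''$ gives $z\notin A$. Since $z\comp y$ was arbitrary, $y\in\neg_\comp A$; since $y\in R(x)$ was arbitrary, $x\in\Box_R(\neg_\comp A)$, completing the proof.

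I expect the only delicate point to be bookkeeping the orientation of $\comp$ versus $\compflip$, both when negating the definition of $\Diamond_Q$ and when reading off the conclusion of the negativity condition --- in particular, making sure the $y''$ supplied by negativity is genuinely a $Q$-successor of the chosen witness $x''$ (so that the $\Diamond_Q$-failure applies to it) and is related to $z$ in the direction that $\Diamond_Q$ effectively ``rejects''. As in Proposition~\ref{DiamondNegProp}, no use of the fixpoint property of $A$ is actually needed; the inclusion holds for every $A\subseteq X$.
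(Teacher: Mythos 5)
Your proof is correct and is essentially the paper's argument run in the contrapositive direction: the paper assumes $x\notin\Box\neg A$, extracts $xRy\compflip z$ with $z\in A$, and uses negativity to produce $x'\comp x$ with $x'\in\Diamond_Q A$, whereas you start from $x\in\neg_\comp\Diamond_Q A$ and chase the same witnesses forward. The bookkeeping of $\comp$ versus $\compflip$ works out exactly as you describe, and your observation that the $c_\comp$-fixpoint hypothesis on $A$ is not used is also accurate.
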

\begin{proof} Suppose $x\not\in \Box\neg A$, so there is a $y\in X$ with $xRy$ and $z\comp y$ with $z\in A$. Then where $x'$ is as in Definition \ref{NegativeFrameDef}, we have $x'\in \Diamond A$, so $x\not\in \neg\Diamond A$.\end{proof}

\section{Unification}\label{UnificationSection}

By assuming (\ref{DiamondNeg}) and the dual self-adjointness of $\neg$, we can simplify the representation from Theorem \ref{CompModalRep}.

\begin{definition} A modal frame  $(X,\comp,R,Q)$ is \textit{unified} if $R=Q$.
\end{definition}

\begin{theorem}\label{CompModalRep2} Let $L$ be a complete lattice with $\neg$, $\Box$, and $\Diamond$ where
\begin{itemize}
\item $\neg$ is a dually self-adjoint unary operation on $L$ with $\neg 1=0$,
\item $\Box$ is a completely multiplicative unary operation on $L$, 
\item $\Diamond$ is a completely additive unary operation on $L$, and 
\item $\Diamond \neg a\leq\neg\Box a$ for all $a\in L$.
\end{itemize} 
Then define:
\begin{itemize}
\item $X= \{(a,b)\mid a,b\in L,\neg a\leq b\}$;
\item $x\comp y$ iff $x_1\not\geq y_0$;
\item $xRy$ iff  for all $a\in L$, $x_0\leq \Box a\Rightarrow y_0\leq a$ and $\Diamond a\leq x_1\Rightarrow  a\leq y_1$;
\item $Q  =  R$.
\end{itemize}
Then:
\begin{enumerate}
 \item\label{CompModalRep2a} $\mathcal{F}=(X,\comp, R,Q)$ is a unified, additive modal frame with $\comp$ pseudo-symmetric;
 \item\label{CompModalRep2b} $(L,\neg, \Box,\Diamond)$ is isomorphic to ${(\lat(X,\comp),\neg_\comp, \Box_R,\Diamond_Q)}$;
  \item\label{CompModalRep2d} if $a\wedge\neg a=0$ for all $a\in L$, then $\comp$ is pseudo-reflexive;
 \item\label{CompModalRep2c} if $\neg\Diamond a\leq \Box\neg a$ for all $a\in L$, then $\mathcal{F}$ is negative.
 \end{enumerate}
\end{theorem}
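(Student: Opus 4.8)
The strategy is to reduce everything to Theorem \ref{CompModalRep} and the results of \S\ref{Interactions}. The single relation $R$ defined here is exactly the intersection of the $R$ and $Q$ relations from Theorem \ref{CompModalRep}: $xRy$ holds iff both $x_0\le\Box a\Rightarrow y_0\le a$ and $\Diamond a\le x_1\Rightarrow a\le y_1$. So the first task (for \ref{CompModalRep2a} and \ref{CompModalRep2b}) is to show that, under the hypothesis $\Diamond\neg a\le\neg\Box a$, the two separate relations of Theorem \ref{CompModalRep} can be forced to coincide. The key observation is that the witnesses $\rho(x)$ and $\sigma(x)$ used in the proof of Theorem \ref{CompModalRep} can be replaced by a single witness that lies in $R\cap Q$. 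Concretely, I would check that for any $x\in X$, the pair $\rho(x)=(\bigwedge\{b\mid x_0\le\Box b\},\,\neg\bigwedge\{b\mid x_0\le\Box b\})$ \emph{also} satisfies $xQ\rho(x)$, i.e.\ $\Diamond a\le x_1\Rightarrow a\le\rho(x)_1$. This is where the interaction axiom enters: from $\Diamond a\le x_1$ one wants $a\le\neg\bigwedge\{b\mid x_0\le\Box b\}$. Using dual self-adjointness of $\neg$ this is equivalent to $\bigwedge\{b\mid x_0\le\Box b\}\le\neg a$; since $\neg a$ is one of the $b$'s precisely when $x_0\le\Box\neg a$, and by (\ref{DiamondNeg}) together with $\neg\neg a\ge a$ and monotonicity we can derive $x_0\le\Box\neg a$ from $\Diamond a\le x_1$ and $\neg a\ge b$ for the relevant $b$ --- more carefully, one uses $\neg a\le x_1$ (which follows from $\Diamond a\le x_1$ and $a\le\neg\neg a$... ) together with the definition of $X$ ($\neg x_0\le x_1$) and the interaction axiom to conclude. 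The clean way: show directly that the $Q$-witness $\sigma(x)$ can be ``merged'' into an $R$-witness, or symmetrically that $\rho(x)$ works for both, using that $\Diamond\neg x_0\le\neg\Box x_0\le x_1$ forces compatibility.

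Once the witness-merging lemma is in hand, parts \ref{CompModalRep2a} and \ref{CompModalRep2b} follow by rerunning the proof of Theorem \ref{CompModalRep} verbatim, everywhere replacing the two relations by the single $R=Q$: the modal-frame condition, the additivity condition, and the preservation of $\Box$ and $\Diamond$ under $f(a)=\{(x,y)\in X\mid x\le a\}$ all go through with the common witness. Pseudo-symmetry of $\comp$ is immediate from the second clause of the representation theorem in \S\ref{RelSem} (or from Proposition \ref{CorrespondenceProp}(\ref{CorrespondenceProp2})), since $\neg$ is double inflationary by Lemma \ref{DSA} and hence $a\le\neg\neg a$ holds, which is exactly what is needed; in fact one checks directly that for $(a,b)\comp(c,d)$ the pair $(1,b)$ is a $\comp$-successor of $(c,d)$ that pre-refines $(a,b)$. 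For part \ref{CompModalRep2d}, if $a\wedge\neg a=0$ then $\neg$ is a weak pseudocomplementation, and pseudo-reflexivity follows from Proposition \ref{CorrespondenceProp}(\ref{CorrespondenceProp1}); again one can verify it concretely on this $X$.

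For part \ref{CompModalRep2c}, assuming also $\neg\Diamond a\le\Box\neg a$, I need to verify the negativity condition of Definition \ref{NegativeFrameDef}: if $xRy\compflip z$ then $\exists x'\comp x\,\forall x''\comp x'\,\exists y''\colon x''Qy''\comp z$. Since $Q=R$, this is: $\exists x'\comp x\,\forall x''\comp x'\,\exists y''\colon x''Ry''\comp z$. From $xRy\compflip z$ we get $y_0\not\le z_1$ and, via the $Q$-clause of $R$, $\Diamond z_0\not\le x_1$ (because $z_0\not\le y_1$... careful with which coordinates); this lets me pick $x'=(\Diamond z_0,\neg\Diamond z_0)$ with $x'\comp x$. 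Then for any $x''\comp x'$ I take $y''=\sigma(x'')$ (which by the merging lemma is an $R$-successor), and use clause (d) from the proof of Theorem \ref{CompModalRep} to get $z_0\not\le y''_1$, i.e.\ $y''\comp z$. The role of the extra axiom $\neg\Diamond a\le\Box\neg a$ is to guarantee that the $x'$ built from $\Diamond z_0$ genuinely sits below $x$ in the right way and that the quantifier alternation matches $\forall x''\comp x'$ rather than $\forall x''\compflip x'$.

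The main obstacle is the witness-merging lemma in the first paragraph: showing that one pair can simultaneously witness $xR(-)$ and $xQ(-)$, and that this is exactly what the interaction axiom $\Diamond\neg a\le\neg\Box a$ buys (together with dual self-adjointness). Getting the coordinate bookkeeping right --- which of $\rho,\sigma$ to use, and precisely how (\ref{DiamondNeg}) converts a hypothesis about $x_1$ into one about $x_0$ via $\neg$ --- is the delicate point; once that lemma is stated and proved, the rest is a careful but routine re-run of the proof of Theorem \ref{CompModalRep} plus appeals to \S\ref{RelSem} and \S\ref{Interactions}.
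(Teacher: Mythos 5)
Your overall strategy matches the paper's: the crux is indeed a ``witness-merging'' lemma producing, for each $x$, a single successor that simultaneously plays the roles of $\rho(x)$ and $\sigma(x)$ from Theorem \ref{CompModalRep}, after which parts (\ref{CompModalRep2a}) and (\ref{CompModalRep2b}) are a rerun of that proof and parts (\ref{CompModalRep2a}), (\ref{CompModalRep2d}) use Proposition \ref{CorrespondenceProp}. But you do not actually prove the merging lemma, and the concrete candidate you put forward is false. You claim that $\rho(x)=(\bigwedge\{b\mid x_0\le\Box b\},\,\neg\bigwedge\{b\mid x_0\le\Box b\})$ also satisfies the $Q$-clause, which would require $\Diamond a\le x_1\Rightarrow x_0\le\Box\neg a$. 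Take $x=(1,1)\in X$ and suppose $\Box b=1$ only for $b=1$; then $\rho(x)=(1,0)$ and the $Q$-clause demands $a\le 0$ for every $a$ (since $\Diamond a\le 1$ always), which fails in any nontrivial $L$. The problem is structural: you are trying to derive a lower bound on $x_0$ from an upper bound on $\Diamond a$ by $x_1$, and the only link between coordinates, $\neg x_0\le x_1$, points the wrong way. Your alternative suggestion (show $\sigma(x)$ is an $R$-witness) fails for the dual reason, since $\sigma(x)_0=1$ forces $a=1$ whenever $x_0\le\Box a$. The correct merged witness must combine both coordinates: $\tau(x)=(\bigwedge\{b\mid x_0\le\Box b\},\ \bigvee\{b\mid\Diamond b\le x_1\}\vee\neg\bigwedge\{b\mid x_0\le\Box b\})$, and the interaction axiom is used precisely to show that enlarging $\sigma(x)_1$ by the join with $\neg\bigwedge\{b\mid x_0\le\Box b\}$ does not destroy the contrapositive $\Diamond a\not\le x_1\Rightarrow a\not\le\tau(x)_1$: one pushes $\Diamond$ through the join by complete additivity and uses $\Diamond\neg d\le\neg\Box d$, complete multiplicativity of $\Box$, antitonicity of $\neg$, and $\neg x_0\le x_1$ to absorb the extra disjunct into $x_1$. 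This computation is the heart of the theorem and is missing from your proposal.

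Your sketch of part (\ref{CompModalRep2c}) also does not go through as written. From $y\compflip z$ you get $y_0\not\le z_1$, not $z_0\not\le y_1$, so the $Q$-clause of $R$ does not yield $\Diamond z_0\not\le x_1$; and even if it did, $\Diamond z_0\not\le x_1$ gives $x\comp x'$ for $x'=(\Diamond z_0,\neg\Diamond z_0)$, whereas negativity needs $x'\comp x$, i.e., $x_0\not\le\neg\Diamond z_0$. The correct route is through the $\Box$-clause: $xRy\compflip z$ gives $x_0\not\le\Box z_1$, then $\neg z_0\le z_1$ and monotonicity of $\Box$ give $x_0\not\le\Box\neg z_0$, and only now does the extra hypothesis $\neg\Diamond a\le\Box\neg a$ yield $x_0\not\le\neg\Diamond z_0$, i.e., $x'\comp x$; one then concludes via $x'\in f(\Diamond z_0)=\Diamond_Q f(z_0)$. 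So the extra axiom is needed exactly where you guessed, but the derivation you outline both misidentifies the coordinate inequality and lands on the wrong orientation of $\comp$.
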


\begin{proof}  Given any $x\in X$, let 
  \[\tau(x)= (\bigwedge\{b\mid x_0\leq\Box b\},   \bigvee\{b\mid \Diamond b\leq x_1\}\vee  \neg \bigwedge\{b\mid x_0\leq\Box b\}).\]
Then observe the following:
\begin{itemize}
\item[(a)] We have $xR\tau(x)$. For $x_0\leq \Box c$ implies $\tau(x)_0\leq c$; and if $c\not\leq  \tau(x)_1$, then $ c\not\leq \bigvee\{b\mid \Diamond b\leq x_1\}$ and hence $\Diamond c\not\leq x_1$. 
\item[(b)] If $x_0\not\leq \Box a$, then $\tau(x)_0\not\leq a$, by the same reasoning as in the proof of Theorem~\ref{CompModalRep}.
\item[(c)] If $\Diamond a\not\leq x_1$, then  $a\not\leq \tau(x)_1$. Contrapositively,
\begin{eqnarray*}
&&a\leq \bigvee\{b\mid \Diamond b\leq x_1\}\vee  \neg \bigwedge\{b\mid x_0\leq\Box b\} \\
&\Rightarrow&  \Diamond a\leq \Diamond \Big(\bigvee\{b\mid \Diamond b\leq x_1\}\vee  \neg \bigwedge\{b\mid x_0\leq\Box b\}\Big)\\&&\mbox{by monotonicity of $\Diamond$}\\
&\Rightarrow&  \Diamond a\leq  \bigvee\{\Diamond b\mid \Diamond b\leq x_1\}\vee \Diamond \neg \bigwedge\{b\mid x_0\leq\Box b\}\\
&&\mbox{by complete additivity of $\Diamond$}\\
&\Rightarrow&  \Diamond a\leq  x_1\vee \Diamond \neg \bigwedge\{b\mid x_0\leq\Box b\}  \\
&\Rightarrow&  \Diamond a\leq  x_1\vee  \neg \Box \bigwedge\{b\mid x_0\leq\Box b\}\quad\mbox{since $\Diamond \neg d\leq \neg\Box d$} \\
&\Rightarrow&  \Diamond a\leq  x_1\vee  \neg  \bigwedge\{\Box b\mid x_0\leq\Box b\}\quad\mbox{by complete multiplicativity of $\Box$} \\
&\Rightarrow&  \Diamond a\leq  x_1\vee  \neg x_0 \quad\mbox{by antitonicity of $\neg$}\\
&\Rightarrow&  \Diamond a\leq  x_1\quad\mbox{since $\neg v_0\leq v_1$ for all $v\in X$.}
\end{eqnarray*}
\end{itemize}

  Now for part (\ref{CompModalRep2a}), that $\mathcal{F}$ is unified is immediate from the definition. The proof that $\mathcal{F}$ is an additive modal frame is almost exactly as in the proof of Theorem \ref{CompModalRep}, only using $\tau$ instead of $\rho$ and $\sigma$:
  
  First we show that $\mathcal{F}$ is a modal frame:
if $x Ry\compflip z$, then $\exists x'\comp x$ $\forall x''\compflip x' $ $\exists y''$: $x''Ry''\compflip z$. Suppose $x Ry\compflip z$. Then $x_0\not\leq \Box z_1$, for otherwise $xRy$ implies $y_0\leq z_1$, contradicting $y\compflip z$.  Now let $x'=(1, \Box z_1)$, so $x'\comp x$. Consider any $x''\compflip x'$, so $x''_0\not\leq x'_1=\Box z_1$. Let $y''=\tau(x'')$, so $x''Ry''$ by (a) above. Then $x''_0\not\leq \Box z_1$ implies $y''_0\not\leq z_1$ by (b) above, so $y''\compflip z$.

Next we show that $\mathcal{F}$ is additive: if $x Qy\comp z$, then $\exists x'\compflip x$ $\forall x''\comp x'$ $\exists y''$: $x''Qy''\comp z$. Suppose $x Qy\comp z$. Since $y\comp z$, we have $z_0\not\leq y_1$, which with $xQy$ implies $\Diamond z_0\not\leq x_1$. Then where $x'=(\Diamond z_0,\neg \Diamond z_0)$, we have $x'\compflip x$. Now consider any $x''\comp x'$, so $\Diamond z_0\not\leq x_1''$. Let $y''=\tau(x'')$, so $x''Ry''$ by (a) above. Then since $\Diamond z_0\not\leq x_1''$, we have $z_0\not\leq y''_1$ by (c) above. Hence $y''\comp z$.
  
That $\comp$ is pseudo-symmetric follows from the isomorphism in part (\ref{CompModalRep2b}) and Proposition \ref{CorrespondenceProp}(\ref{CorrespondenceProp2}). 

The proof of part (\ref{CompModalRep2b}) is almost exactly as in the proof of Theorem \ref{CompModalRep}, only using $\tau$ in place of $\rho$ and $\sigma$:

Exactly as in the proof of Theorem \ref{CompModalRep}, the function $f$ defined by $f(a)=\{x\in P\mid x_0\leq a\}$ is an isomorphism from $L$ to $\lat(X,\comp)$ that also preserves $\neg$. It only remains to check $\Box$ and~$\Diamond$.

To show that $f(\Box a)=\Box_Rf(a)$, first suppose $x\in f(\Box a)$, so $x_0\leq \Box a$. Then $xRy$ implies $y_0\leq a$ and hence $y_0\in f(a)$. Thus, $x\in\Box_Rf(a)$. Conversely, suppose $x\not\in f(\Box a)$, so $x_0\not\leq \Box a$. Let $y=\tau(x)$, so $xRy$ by (a) above. Then $x_0\not\leq \Box a$ implies $y_0\not\leq a$ by (b) above,  so $y\not\in f(a)$ and hence $x\not\in \Box_R f(a)$. 

To show that $f(\Diamond a)=\Diamond_Qf(a)$, first suppose $x\in f(\Diamond a)$, so $x_0\leq \Diamond a$. Further suppose $x'\comp x$, so $x_0\not\leq x'_1$ and hence $\Diamond a\not\leq x'_1$. Let $y'=\tau(x')$, so $x'Qy'$ by (a) above. Then since $\Diamond a\not\leq x'_1$, we have $a\not\leq y'_1$ by (c) above, so $y'\comp (a,\neg a)$. Thus, for all $x'\comp x$, there are $y',z'$ such that $x'Qy'\comp z'\in f(a)$. Hence $x\in\Diamond_Q f(a)$. Conversely, suppose $x\not\in f(\Diamond a)$, so $x_0\not\leq \Diamond a$. Let $x'=(1,\Diamond a)$, so $x'\comp x$. Now consider any $y'$ such that $x'Qy'$, which with $\Diamond a\leq x'_1$ implies $a\leq y'_1$. Then for any $y\compflip y'$, we have $y_0\not\leq y'_1$ and hence $y_0\not\leq a$, so $y\not\in f(a)$. Thus, $\exists x'\comp x$ $\forall y'\in Q(x')$ $\forall y\compflip y'$: $y\not\in f(a)$, which shows $x\not\in \Diamond f(a)$.

For part (\ref{CompModalRep2d}), assuming $a\wedge\neg a=0$ for all $a\in L$, that $\comp$ is pseudo-reflexive follows from the isomorphism in part (\ref{CompModalRep2b}) and Proposition \ref{CorrespondenceProp}(\ref{CorrespondenceProp1}). 

For part (\ref{CompModalRep2c}), assuming $\neg\Diamond a\leq\Box\neg a$ for all $a\in L$, we must show  negativity: if $xRy\compflip z$, then $\exists x'\comp x$ $\forall x''\comp x'$ $\exists y''$: $x''Qy''\comp z$. Suppose $xRy\compflip z$, which implies $x_0\not\leq \Box z_1$. Since $\neg z_0\leq z_1$, we have $\Box \neg z_0\leq \Box z_1$ by the monotonicity of $\Box$, so $x_0\not\leq \Box z_1$  implies $x_0\not\leq \Box \neg z_0$. Hence by our initial assumption, $x_0\not\leq \neg \Diamond  z_0$. Then where $x'=(\Diamond z_0,\neg\Diamond z_0)$, we have $x'\comp x$. Then since $x'\in f(\Diamond z_0)$ and $f$ preserves $\Diamond$, we have $x'\in \Diamond_Q f(z_0)$, which implies that $\forall x''\comp x'$ $\exists y''\in Q(x'')$ $\exists y'\compflip y''$: $y'\in f(z_0)$, so $y'_0\leq z_0$. From $y'\compflip y''$, we have $y'_0\not\leq y''_1$, which with $y'_0\leq z_0$ implies $z_0\not\leq y''_1$, so $y''\comp z$. Thus,  we have shown that $\forall x''\comp x'$ $\exists y''$: $x''Qy''\comp z$, as desired.\end{proof}

Similarly, we have the following unified analogue of Theorem \ref{GenModalRep1}.

\begin{theorem}\label{GenModalRep2} Let $L$ be a bounded lattice with $\neg$, $\Box$, and $\Diamond$ where
\begin{itemize}
\item $\neg$ is a dually self-adjoint unary operation on $L$ with $\neg 1=0$,
\item $\Box$ is a multiplicative unary operation on $L$,  
\item $\Diamond$ is an additive unary operation on $L$, and
\item $\Diamond \neg a\leq\neg\Box a$ for all $a\in L$.
\end{itemize}
Then define:
\begin{itemize}
\item $X=\{(F,I)\mid F \mbox{ is a filter in }L, I\mbox{ is an ideal in }L,\mbox{and } \{\neg a\mid a\in F\}\subseteq I\}$;
\item $(F,I)\comp (F',I')$ iff  $I\cap F'=\varnothing$;
\item $(F,I)R(F',I')$ iff for all $a\in L$, $\Box a\in F\Rightarrow a\in F'$ and $\Diamond a\in I\Rightarrow a\in I'$;
\item $Q=R$.
\end{itemize}
Then:
\begin{enumerate}
 \item\label{GenModalRep2a} $\mathcal{F}=(X,\comp, R, Q)$ is a unified, additive modal frame with $\comp$ pseudo-symmetric;
 \item\label{GenModalRep2b} there is an embedding of $(L,\neg, \Box,\Diamond)$ into ${(\lat(X,\comp),\neg_\comp, \Box_R,\Diamond_Q)}$ and an isomorphism between $(L,\neg, \Box,\Diamond)$  and the subalgebra of ${(\lat(X,\comp),\neg_\comp, \Box_R,\Diamond_Q)}$ consisting of $c_\comp$-fixpoints that are compact open in the topology on $X$ generated by $\{\widehat{a}\mid a\in L\}$, where $\widehat{a}=\{(F,I)\mid a\in F\}$;
 \item\label{GenModalRep2d}  if $a\wedge \neg a=0$ for all $a\in L$, then $\mathcal{F}$ is pseudo-reflexive;
  \item\label{GenModalRep2c}  if $\neg\Diamond a\leq \Box\neg a$ for all $a\in L$, then $\mathcal{F}$ is negative.
 \end{enumerate}
\end{theorem}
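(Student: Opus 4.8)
The plan is to mimic the proof of Theorem~\ref{GenModalRep1} almost verbatim, the only structural change being that the two separate canonical successors used there --- a filter witnessing the $\Box$-clause and an ideal witnessing the $\Diamond$-clause --- are merged into a single filter-ideal pair that witnesses both clauses of the unified relation $R=Q$. Concretely, for $x=(F'',I'')\in X$ I will use as canonical successor the pair $(G'',H'')$ where $G''=\{b\mid\Box b\in F''\}$ (a filter, by multiplicativity and monotonicity of $\Box$) and $H''$ is the ideal generated by $\{b\mid\Diamond b\in I''\}\cup\{\neg b\mid b\in G''\}$; then $(G'',H'')\in X$ and $x R (G'',H'')$. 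The interaction axiom $\Diamond\neg a\leq\neg\Box a$ will be invoked at exactly the two spots where the $\Box$-data in $G''$ and the $\Diamond$-data in $H''$ have to be reconciled.

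For part~(\ref{GenModalRep2a}), the modal frame condition goes through word for word as in Theorem~\ref{GenModalRep1}: from $(F,I)R(G,H)\compflip(J,K)$ one still extracts $K\cap\{a\mid\Box a\in F\}=\varnothing$, takes $x'=(\mathord{\uparrow}1,I')$ with $I'$ the ideal generated by $\{\Box a\mid a\in K\}$, and for each $x''\compflip x'$ takes the canonical successor; the extra $\Diamond$-generators now sitting in $H''$ are harmless because the verification that the successor is $\compflip z$ touches only $G''$ and $K$. For additivity, from $(F,I)R(G,H)\comp(J,K)$ one gets $\{a\mid\Diamond a\in I\}\cap J=\varnothing$, takes $x'=(F',I')$ with $F'$ the filter generated by $\{\Diamond a\mid a\in J\}$, and for each $x''\comp x'$ (so $I''\cap F'=\varnothing$) must show $H''\cap J=\varnothing$ for the canonical successor $(G'',H'')$ of $x''$. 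Here the axiom first enters: if $d\in H''\cap J$, then collapsing finitely many generators (by additivity of $\Diamond$, multiplicativity of $\Box$, antitonicity of $\neg$) gives $d\leq b\vee\neg c$ with $\Diamond b\in I''$ and $\Box c\in F''$, so $\Diamond d\leq\Diamond b\vee\Diamond\neg c\leq\Diamond b\vee\neg\Box c\in I''$ (using $\Box c\in F''\Rightarrow\neg\Box c\in I''$ from $(F'',I'')\in X$), whence $\Diamond d\in I''$, contradicting $\Diamond d\in F'$ and $I''\cap F'=\varnothing$. Pseudo-symmetry of $\comp$ depends only on $(X,\comp)$ and holds because $\neg$ is double inflationary (Lemma~\ref{DSA}), exactly as in the proof of Theorem~4.30 of~\cite{Holliday2023}: given $y\comp x$, the pair with filter $x_F$ and ideal generated by $\{\neg a\mid a\in x_F\}$ is a $\comp$-predecessor of $y$ pre-refining $x$.

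For part~(\ref{GenModalRep2b}), the claims about the lattice reduct, $\neg$, and the topology are literally those of Theorem~\ref{GenModalRep1}, since they depend only on $X$, $\comp$ and the map $a\mapsto\widehat a$; it remains to check $\widehat{\Box a}=\Box_R\widehat a$ and $\widehat{\Diamond a}=\Diamond_Q\widehat a$ for the unified $R=Q$. Three of the four inclusions are routine and axiom-free: $\widehat{\Box a}\subseteq\Box_R\widehat a$ and $\widehat{\Diamond a}\supseteq\Diamond_Q\widehat a$ follow directly from the two clauses of $R$ (for the latter, using the witness $(\mathord{\uparrow}1,\mathord{\downarrow}\Diamond a)$), and $\widehat{\Box a}\supseteq\Box_R\widehat a$ follows by sending $(F,I)\notin\widehat{\Box a}$ to its canonical successor, which avoids $\widehat a$. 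The remaining inclusion $\widehat{\Diamond a}\subseteq\Diamond_Q\widehat a$ is the second use of the axiom: given $(F,I)\in\widehat{\Diamond a}$ and $(F',I')\comp(F,I)$ (so $\Diamond a\notin I'$), one takes the canonical successor $(G',H')$ of $(F',I')$ and the pair $(\mathord{\uparrow}a,\mathord{\downarrow}\neg a)\in\widehat a$, and must show $a\notin H'$, which holds since $a\in H'$ would yield $\Diamond a\in I'$ by the same collapsing-and-pushing computation. Parts~(\ref{GenModalRep2d}) and~(\ref{GenModalRep2c}) are then quick: pseudo-reflexivity under $a\wedge\neg a=0$ again concerns only $(X,\comp)$ --- a non-absurd pair has a proper filter $F$ (else it has no $\comp$-successor), and then $F$ with the ideal generated by $\{\neg a\mid a\in F\}$ is a $\comp$-predecessor of the pair pre-refining it; and negativity under $\neg\Diamond a\leq\Box\neg a$ is obtained by taking $x'=(F',I')$ with $F'$ the filter generated by $\{\Diamond a\mid a\in J\}$ from a triangle $(F,I)R(G,H)\compflip(J,K)$, checking $x'\comp x$ via $\neg\Diamond a\leq\Box\neg a$ and $G\cap K=\varnothing$, and then noting that each canonical successor $(G'',H'')$ of an $x''\comp x'$ satisfies $(G'',H'')\comp(J,K)$ by the very $H''\cap J=\varnothing$ argument used for additivity.

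I expect the main obstacle to be purely one of bookkeeping: one must verify that loading the single pair $(G'',H'')$ with both the $\Box$-preimages (in $G''$) and the $\Diamond$-generators together with the $\neg$-images of $G''$ (in $H''$) never disturbs the $\comp$-relations and non-relations that the two-successor construction of Theorem~\ref{GenModalRep1} was engineered to produce. The one genuinely new ingredient, absent from Theorem~\ref{GenModalRep1}, is the repeated use of $\Diamond\neg a\leq\neg\Box a$ to push a disjunct $\neg c$ --- present because $\Box c$ belongs to the relevant filter, hence $\neg\Box c$ to the relevant ideal --- under $\Diamond$ back into that ideal; this algebraic move is exactly what licenses collapsing the two accessibility relations into one.
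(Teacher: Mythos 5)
Your proposal is correct and follows essentially the same route as the paper: both modify the proof of Theorem \ref{GenModalRep1} by merging the two separate canonical successors into a single filter--ideal pair witnessing both clauses of the unified $R=Q$ and invoking $\Diamond\neg a\leq\neg\Box a$ to reconcile the $\Box$-data with the $\Diamond$-data. The only (immaterial) difference is that the paper takes the successor's ideal to be the full preimage $\{a\mid\Diamond a\in I''\}$ and uses the interaction axiom once to check $\{\neg b\mid b\in G''\}\subseteq H''$ (so the pair lies in $X$), whereas you take the ideal generated by $\{b\mid\Diamond b\in I''\}\cup\{\neg b\mid b\in G''\}$ --- a subideal of the paper's --- and consequently invoke the axiom inside the disjointness verifications instead; the remaining parts (pseudo-symmetry, pseudo-reflexivity, negativity) match the paper's arguments.
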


\begin{proof} For part (\ref{GenModalRep2a}), that $\mathcal{F}$ is unified is immediate from the definition. For the pseudo-symmetry of $\comp$, the proof is the same as in the proof of Proposition 4.32 in \cite{Holliday2023} (only without claiming  $F\cap I''=\varnothing$, which we do not need here). 

For the other properties, we explain how to modify the proof of Theorem~\ref{GenModalRep1} in light of the modified definitions of $R$ and $Q$ in Theorem \ref{GenModalRep2}. To show that $\mathcal{F}$ is a modal frame, modify the proof of the modal frame condition for Theorem~\ref{GenModalRep1} as follows: let $H''=\{a\mid \Diamond a\in I''\}$, which is an ideal. Then if $a\in G''$, we have $\Box b\in F''$, so $\neg \Box b\in I''$ and hence $\Diamond \neg b\in I''$ by the fourth bullet point of Theorem \ref{GenModalRep2}, so $\neg b\in H''$. Thus, $(G'',H'')\in X$.

To show $\mathcal{F}$ is additive, modify the proof of additivity for Theorem \ref{GenModalRep1} as follows: let $G''=\{a\mid \Box a\in F''\}$, which is a filter. Where $H''=\{a \mid \Diamond a\in I''\}$, which is an ideal, we have $(G'',H'')\in X$ by the same reasoning as in the previous paragraph. Then we have $(F'',I'')Q(G'',H'')$ by construction.

For the proof that $\widehat{\Box a}=\Box_R\widehat{a}$, modify the proof in Theorem \ref{GenModalRep1} by setting  $I'=\{b\mid \Diamond b\in I\}$. For the proof that $\widehat{\Diamond a}=\Diamond_Q\widehat{a}$, modify the proof in Theorem~\ref{GenModalRep1} by setting $G'=\{b\mid \Box b\in F'\}$.

For part (\ref{GenModalRep2d}), assuming $a\wedge\neg a=0$ for all $a\in L$, consider a non-absurd $(F,I)$, so there is some $(G,H)\comp (F,I)$. Hence $H\cap F=\varnothing$, so $0\not\in F$. Let $I'=\{a\mid \neg a\in F\}$. Then $F\cap I'=\varnothing$, for otherwise we have $a,\neg a\in F$ and hence $0\in F$, contradicting what we previously derived. Thus, $(F,I')\comp (F,I)$, and $(F,I')$ pre-refines $(F,I)$. This shows that $\comp$ is pseudo-reflexive. 

For part (\ref{GenModalRep2c}), assuming $\neg\Diamond a\leq\Box\neg a$, we show that $\mathcal{F}$ is negative:
\[\mbox{if $xRy\compflip z$, then $\exists x'\comp x$ $\forall x''\comp x'$ $\exists y''$: $x''Qy''\comp z$.}\]
Suppose $(F,I)R(G,H)\compflip (J,K)$. Let $F'$ be the filter generated by $\{\Diamond a\mid a\in J\}$ and $I'$ the ideal generated by $\{\neg\Diamond a\mid a\in J\}$, which is equal to $\{\Box \neg a\mid a\in J\}$ given $\neg\Diamond a\leq\Box\neg a$ and the converse from Proposition \ref{DSAM}. We claim that $I'\cap F=\varnothing$. For if $b\in I'$, then $b\leq \Box\neg a_1\vee\dots\vee\Box\neg a_n\leq \Box \neg (a_1\wedge\dots \wedge a_n)$ for some $a_1,\dots,a_n\in J$; and then if $b\in F$, we have $\Box \neg (a_1\wedge\dots \wedge a_n)\in F$, so $\neg (a_1\wedge\dots \wedge a_n)\in G$, which implies $ \neg (a_1\wedge\dots \wedge a_n)\not\in K$ and hence $a_1\wedge\dots\wedge a_n\not\in J$, contradicting $a_1,\dots,a_n\in J$. Thus, $I'\cap F=\varnothing$ and hence $(F',I')\comp (F,I)$. Now consider any $(F'',I'')\comp (F',I')$, so $I''\cap F'=\varnothing$. Let $G''=\{a\mid \Box a\in F''\}$ and $H''=\{a\mid \Diamond a\in I''\}$, so $(G'',H'')\in X$ as in the second paragraph of the proof above, and $(F'',I'')Q(G'',H'')$. We claim that $H''\cap J=\varnothing$. For if $a\in J$, then $\Diamond a\in F'$, which implies $\Diamond a\not\in I''$, which in turn implies $a\not\in H''$. Thus, $(G'',H'')\comp (J,K)$, which completes the proof.\end{proof}

\section{Fundamental modal logic}\label{FML}

Let $\mathcal{ML}$ be the propositional modal language with $\wedge,\vee,\neg,\Box,\Diamond$ and now also $\bot,\top$. At last, we define our proposed system of fundamental modal logic.
\begin{definition}\label{BinaryLogic} \textit{Fundamental modal logic} is the smallest binary relation \\ $\vdash \,\subseteq\mathcal{ML}\times\mathcal{ML}$ such that  for all $\varphi,\psi,\chi\in\mathcal{ML}$, not only conditions 1-11 of Definition \ref{LogicDef} but also the following hold:

\begin{center}
\begin{tabular}{ll}
12. $\bot\vdash\varphi\vdash\top$ & 17. $\top\vdash \Box\top$  \\
13. $\neg \top \vdash\bot$ &  18. $\Diamond \bot \vdash \bot$  \\ 

14. $\Box \varphi \wedge\Box \psi \vdash \Box (\varphi\wedge\psi)\quad $ & 19. if $\varphi\vdash\psi$, then $\Box\varphi\vdash\Box\psi$ \\
15. $\Diamond(\varphi\vee\psi) \vdash \Diamond\varphi\vee\Diamond\psi $ & 20. if $\varphi\vdash\psi$, then $\Diamond\varphi\vdash\Diamond\psi$.\\
16. $\Diamond \neg\varphi\vdash \neg\Box\varphi$
\end{tabular}
\end{center}
\end{definition}

\noindent The forcing clauses for $\Box\varphi$ and $\Diamond\varphi$ in relational models are as in Proposition~\ref{BoxDiamond}; $\bot$ is forced only at absurd states (recall \S~\ref{RelSem}), while $\top$ is forced at all states.

\begin{theorem}  Fundamental modal logic is sound and complete with respect to the class of unified, additive modal frames $(X,\comp,R)$ in which $\comp$ is pseudo-reflexive and pseudo-symmetric.
\end{theorem}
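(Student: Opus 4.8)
\medskip\noindent\textbf{Proof idea.} The plan is to prove soundness semantically, by checking that the algebra of propositions of any such frame validates conditions 1--20 of Definition~\ref{BinaryLogic}, and to prove completeness by a Lindenbaum--Tarski construction that feeds into the representation Theorem~\ref{GenModalRep2}.

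For \emph{soundness}, fix a unified, additive modal frame $(X,\comp,R)$ (so $Q=R$) with $\comp$ pseudo-reflexive and pseudo-symmetric, and consider the algebra $\mathbb{A}=(\lat(X,\comp),\neg_\comp,\Box_R,\Diamond_R)$ with $0=c_\comp(\varnothing)$ and $1=X$. I would verify that $\mathbb{A}$ validates each of conditions 1--20. Conditions 1--11 hold by the soundness direction of Theorem~\ref{Completeness}, since $(X,\comp)$ is pseudo-reflexive and pseudo-symmetric and the forcing of $\wedge,\vee,\neg$-formulas in a model over $(X,\comp,R)$ depends only on $(X,\comp)$. Conditions 12--13 are immediate: $0,1$ are bottom and top of $\lat(X,\comp)$ by Proposition~\ref{RelSemFacts}(ii), and $\neg_\comp X=c_\comp(\varnothing)$ by Proposition~\ref{RelSemFacts}(iii). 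Conditions 14, 17, 19 follow from $\Box_R$ being completely multiplicative (Proposition~\ref{BoxDiamond}), taking the nullary meet for condition~17. Conditions 15, 18, 20 follow from $\Diamond_R$ being completely additive, which holds by Proposition~\ref{AddLemm} because the frame is additive, taking the nullary join for condition~18. Finally, condition~16 is exactly Proposition~\ref{DiamondNegProp}, which applies since being unified gives $Q=R\subseteq R$. Soundness of the consequence relation then follows by the usual induction on derivations.

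For \emph{completeness}, form the Lindenbaum--Tarski algebra $L$ of fundamental modal logic: its elements are the interderivability classes $[\varphi]$, ordered by $[\varphi]\leq[\psi]$ iff $\varphi\vdash\psi$, with $[\bot]=0$, $[\top]=1$, and lattice and modal operations induced by the connectives (well-definedness of $\Box$ and $\Diamond$ uses conditions 19--20). I would then check that $L$ satisfies the hypotheses of Theorem~\ref{GenModalRep2}: $\neg$ is antitone by Definition~\ref{LogicDef}.11 and double inflationary by Definition~\ref{LogicDef}.6, hence dually self-adjoint by Lemma~\ref{DSA}; $\neg 1=0$ by condition~13; moreover $a\wedge\neg a=0$ by Definition~\ref{LogicDef}.7; $\Box$ is multiplicative by conditions 14, 17, 19; $\Diamond$ is additive by conditions 15, 18, 20; and $\Diamond\neg a\leq\neg\Box a$ by condition~16. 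Applying Theorem~\ref{GenModalRep2}, there is a unified, additive modal frame $\mathcal{F}=(X,\comp,R,R)$ with $\comp$ pseudo-symmetric (part~\ref{GenModalRep2a}) and, since $a\wedge\neg a=0$ in $L$, also pseudo-reflexive (part~\ref{GenModalRep2d}), together with an embedding $e$ of $(L,\neg,\Box,\Diamond)$ into $(\lat(X,\comp),\neg_\comp,\Box_R,\Diamond_R)$ (part~\ref{GenModalRep2b}). Putting $V(p)=e([p])$ gives a model $\mathcal{M}$ on $\mathcal{F}$, and a routine induction on $\varphi$ yields the truth lemma $\mathcal{M},x\Vdash\varphi$ iff $x\in e([\varphi])$ — the modal and negation cases using that $e$ preserves $\Box,\Diamond,\neg$ together with the forcing clauses of Proposition~\ref{BoxDiamond}, and the $\bot,\top$ cases using that $e$ preserves the bounds. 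Hence if $\varphi\nvdash\psi$ then $[\varphi]\not\leq[\psi]$ in $L$, so $e([\varphi])\not\subseteq e([\psi])$, so some $x\in X$ has $\mathcal{M},x\Vdash\varphi$ and $\mathcal{M},x\nVdash\psi$, witnessing failure of the consequence over the class of frames in question.

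Given the machinery already assembled, I expect this to be an assembly job rather than a source of new difficulty. The points needing genuine care are: (i) matching the forcing clauses for $\bot,\top,\Box,\Diamond$ precisely against the algebraic operations so that the nullary cases of multiplicativity and additivity deliver conditions 17 and 18, and so that ``$\bot$ forced only at absurd states'' coincides with membership in $c_\comp(\varnothing)$; (ii) confirming that the Lindenbaum--Tarski operations are well defined and that $L$ really satisfies \emph{every} algebraic hypothesis of Theorem~\ref{GenModalRep2}, in particular $\neg 1=0$ and $a\wedge\neg a=0$, which are exactly what license pseudo-reflexivity; and (iii) the routine but load-bearing truth lemma. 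None of these looks like a serious obstacle, since completeness has been reduced to a representation theorem that is already in hand.
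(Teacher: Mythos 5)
Your proposal is correct and follows exactly the paper's own (very terse) argument: soundness by verifying conditions 1--20 against Propositions \ref{RelSemFacts}, \ref{BoxDiamond}, \ref{AddLemm}, and \ref{DiamondNegProp}, and completeness by applying Theorem \ref{GenModalRep2} to the Lindenbaum--Tarski algebra, with condition 7 supplying $a\wedge\neg a=0$ for pseudo-reflexivity and condition 16 supplying $\Diamond\neg a\leq\neg\Box a$. Your write-up is in fact more detailed than the paper's one-line proof, and the details you fill in are the right ones.
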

\begin{proof} Soundness is by Propositions \ref{RelSemFacts}, \ref{BoxDiamond}, \ref{AddLemm}, and \ref{DiamondNegProp}. For completeness, apply Theorem \ref{GenModalRep2} to the Lindenbaum-Tarski algebra of the logic.
\end{proof}

\section{Conclusion}\label{Conclusion}

We have proposed a way of adding modalities to fundamental logic, both axiomatically and semantically. Our representation theorems raise obvious questions about associated categorical dualities (see \cite{Massas2024} for morphisms), and the interactions between $\Box$ and $\Diamond$ via $\neg$ cry out for systematic correspondence theory. Also conspicuously absent has been ``the'' conditional $\to$. Weak conditionals possibly appropriate for fundamental logic are discussed in \cite[\S~6]{Holliday2023} and \cite{Holliday2024}. Treating a language with both modalities and conditionals is a natural next step, especially in connection with applications to natural language as in \cite{Holliday-Mandelkern2022}. Finally, our focus here has been entirely semantical. Yet we hope that in light of recent proof-theoretic successes with fundamental logic  \cite{Aguilera2022}, proof theorists might also find fundamental modal logic to be a worthy object of~study.

\subsection*{Acknowledgements}

I thank Yifeng Ding, Daniel Gonzalez, Guillaume Massas, and Yanjing Wang for helpful discussion and the three anonymous referees for helpful comments.

\bibliographystyle{aiml}
\bibliography{fundamentalmodal}

\end{document}